 \newcommand\ForAuthors[1]
 \newcommand\makecomment[1]{}
\def\stab{\operatorname{Sta}}
\def\pd{\operatorname{PD}}
\def\cball#1{\overline{B}(0,#1)}
\def\dset{\mathcal{D}}
\def\urho{\underline{\rho}}
\def\orho{\overline{\rho}}
\def\spef{\mathfrak{f}}
\def\speg{\mathfrak{g}}
\def\speh{\mathsf{H}}
\def\spex{\mathsf{X}}
\def\spev{\mathsf{V}}
\def\funset{\mathbb{\Gamma}}
\def\fset{\Omega([0,1])}
\def\secfunh{\funset_{\rm fun}}
\def\secfunb{\funset_{\rm sc}}
\def\Fu{\mathfrak{F}_u}
\def\Idd{\mathrm{Id}}
\def\rr{\mathbb R}
\def\rd{\rr^d}
\def\nn{\mathbb N}
\def\xin{X^\mathrm{in}}
\def\fin{\operatorname{Fin}\left(\xin\right)}
\def\osp#1{\overline{#1_{\xin}}}
\def\osps#1#2{\overline{#1_{#2}}}
\def\norm#1{\| #1\|}
\def\agmx{\operatorname{Argmax}}
\def\Max{\operatorname*{Max}}
\def\dpcp{\operatorname{DPCP}}
\def\gs{\Delta^s}
\def\klcls{\mathcal{K}\mathcal{L}}
\def\kcls{\mathcal{K}}
\def\kclsi{\mathcal{K}_\infty}
\def\lcls{\mathcal{L}}
\def\bigkse{\mathbf{K}}
\def\bigks{\mathbf{K}^s}
\def\nuopt{\nu_{\rm opt}}
\def\gls#1{{\mathrm R}_{#1}}
\def\gsls#1{\gls{#1}^s}
\def\res{\mathcal{S}}
\newtheorem{assumption}{Assumption}
\newtheorem{prop}{Proposition}
\newtheorem{defi}{Definition}
\newtheorem{theorem}{Theorem}
\newtheorem{corollary}{Corollary}
\newtheorem{problem}{Problem}
\newtheorem{lemma}{Lemma}
\newtheorem{remark}{Remark}
\newtheorem{example}{Example}
\title{$\klcls$ and Lyapunov Approaches for Discrete-time Peak Computation Problems}
\author{Assalé Adjé\\ LAboratoire de Modélisation et Pluridisciplinaire et Simulations\\
LAMPS\\ Université de Perpignan Via Domitia\\ France
}
\begin{document}
\maketitle
\begin{abstract}
In this paper, we propose a method to solve discrete-time peak computation problems (DPCPs for short). DPCPs are optimization problems that consist of maximizing a function over the reachable values set of a discrete-time dynamical system. The optimal value of a DPCP can be rewritten as the supremum of the sequence of optimal values. Previous results provide general techniques for computing the supremum of a real sequence from a well-chosen pair of a strictly increasing continuous function on $[0,1]$ and a positive scalar in $(0,1)$. In this paper, we exploit the specific structure of the optimal value of the DPCP to construct such a pair from classical tools from stability theory: $\klcls$ certificate and Lyapunov functions. 
\end{abstract}



\section{Introduction}\label{sec:intro}

In this paper, we propose to solve {\it discrete-time peak computation problems} in finite-dimensional state spaces. A discrete-time peak computation problem can be viewed as a particular maximization problem for which the constraint on the decision variable is to belong to the reachable values set of a given discrete-time dynamical system. The objective function of this optimization problem is a function of the states. This particular maximization problem can model various situations in engineering, economics, physics, biology... For example, in population dynamics, for a predator-prey system, one may be interested in the maximal size of each species, even if the system asymptotically oscillates between several equilibria. This problem boils down to maximizing each coordinate of the state variable separately over the system modeling the population dynamics. 

Besides its importance in analyzing critical concrete situations, the literature on discrete-time peak computation problems in the communities of dynamical systems analysis and optimization is quite thin. Discrete-time peak computation problems appear for linear systems, where the objective function is the Euclidean norm in~\cite{ahiyevich2018upper}. This problem also appears in the analysis of stable linear systems with general quadratic objective functions in~\cite{DBLP:journals/jota/Adje21} and in~\cite{adje13052025}. A more constrained version of the problem appears in~\cite{ahmadi2024robust}. The additional constraint for the state variable is to live in a given set. Discrete-time peak computation problems have a continuous-time counterpart (see e.g.,~\cite{miller2020peaksafety}). Note that in~\cite{miller2020peaksafety}, the authors address the problem of verifying safety properties (see, e.g.,~\cite{belta2017formal} for verification problems on discrete-time systems) for dynamical systems and formulate the safety verification problem as a continuous-time peak computation problem. Safety verification problems aim to prove that the trajectories of a system are contained in a safe set. A safety verification problem can be translated into a peak computation problem when the safe set is a sublevel of sufficiently regular function. In this case, the objective function of the peak computation problem is the function associated with the sublevel set. A safety analysis from the peak computation point of view is also proposed for switched systems in~\cite{adje2017proving} and for program analysis in~\cite{adje2015property}. For the safety analysis of dynamical systems, one can think that a reachability analysis (see e.g.,~\cite{rakovic2006reachability} and references therein), equivalent to a feasibility analysis in our context, is sufficient to prove some properties of the dynamical system. However, maximality in our context is synonymous with criticality. The optimal value in this situation can be viewed as the value that penalizes the system the most with respect to some criteria and defines the extreme conditions under which the system could enter.

In~\cite{miller2020peaksafety}, the authors are interested in computing an upper bound of the peak optimal value for the continuous-time case. The techniques used are based on Sums-Of-Squares (SOS) and Linear Matrix Inequalities (LMI) formulations (the interested reader can consult~\cite{Lasserre_2015} for those subjects). The same goal motivates the authors of~\cite{ahiyevich2018upper}. They limit their study to linear systems and norm objective functions they directly use semidefinite programming (see~\cite{ben2001lectures} for further details on the subject). The purpose of the current paper is to develop a method for computing the exact value of a discrete-time peak computation problem. This purpose is shared by the authors of~\cite{ahmadi2024robust}. However, this latter cannot be fairly compared because of the additional constraint. Moreover, the studied systems are linear or piecewise linear, and the objective functions are linear. 

The goal of the current paper is to provide a theoretical generalization of the works in~\cite{DBLP:journals/jota/Adje21} and ~\cite{adje13052025}. In~\cite{DBLP:journals/jota/Adje21} and in~\cite{adje13052025}, the resolution of discrete-time peak computation problems is restricted to stable affine systems and quadratic objective functions. In this paper, we consider wider classes of stable systems and general objective functions. The techniques developed in this paper combine the method proposed in ~\cite{adje2025maximizationrealsequences} with the Lyapunov approach of~\cite{adje13052025}. In~\cite{adje2025maximizationrealsequences}, the method aims to find the maximal term of a real sequence using homeomorphisms of convergent positive geometric sequences. In~\cite{adje13052025}, homeomorphisms and geometric sequences are constructed from quadratic Lyapunov functions. In this paper, we also propose a construction of homeomorphisms and geometric sequences from $\klcls$ bounds. The use of such classical tools of dynamical systems stability theory (the interested reader can consult~\cite[Chap. 4]{elaydiintroduction} for discrete-time stability notions) to solve an optimization problem appears to be unnatural.  This is a consequence of a simple observation: the existence of a maximizer for a real sequence depends on its asymptotic behavior. More precisely, the existence of a maximizer relies on the finiteness of the limit superior and on the existence of a term strictly greater than the limit superior. Hence, tools for studying the asymptotic behavior of the system help in the existence of a peak and its characterization. Although general stability properties can be difficult to obtain, asymptotic stability properties can be easily presented and understood from $\mathcal{KL}$ bounds (e.g.,~\cite{doi:10.1080/10236190902817844} or~\cite{NESIC20041025}), that is, the norms of the states are globally bounded from above by a $\mathcal{KL}$ function. However, verifying that the $\mathcal{KL}$ function is an upper bound over the norms of the states must be done for all integers, which can be difficult. In contrast, Lyapunov functions are functional certificates for system stability. Some results guarantee the existence of Lyapunov functions: a system is globally asymptotically stable if and only if there exists a Lyapunov function (see, e.g., \cite{khalil2002nonlinear}). Being a Lyapunov function is a timeless property that depends only on the dynamics. In a few advantageous situations, Lyapunov functions can be computed using numerical optimization solvers (see the survey~\cite{giesl2015review}).
 
The paper is organized as follows. Section~\ref{sec:formulation} presents the discrete-time peak computation problems addressed in this paper. In Section~\ref{sec:formulation}, we also present a running example that illustrates the techniques developed in this paper. Section~\ref{sec:preliminaries} recalls the useful results obtained in~\cite{adje2025maximizationrealsequences}. In Section~\ref{sec:preliminaries}, we also provide basic definitions of comparison functions. Then, in Section~\ref{sec:kl}, we develop an approach based on $\klcls$ bounds. In Section~\ref{sec:lyap} we develop the approach using general Lyapunov functions. In these two sections, we explain how to use $\klcls$ and Lyapunov functions to solve a discrete-time peak computation problem. We also apply these techniques to the running example presented in Section~\ref{sec:formulation}. Section~\ref{sec:conclusion} concludes the paper and discusses future research.

\vspace{0,2cm}

\noindent {\bf Notations}: $\rr$ stands for the set of reals, $\rr_+$ the set of nonnegative reals, $\rr^*$ the set of nonzero reals and $\rr_+^*$ the set of strictly positive reals. The vector space of vectors of $d$ reals is denoted by $\rd$. The set of natural integers is denoted by $\nn$ whereas $\nn^*$ denotes the set of nonzero natural integers. The norm $\norm{\cdot}$ stands for the Euclidean norm. Finally, $\cball{\alpha}$ denotes the closed ball of radius $\alpha$ centered at zero i.e., $\cball{\alpha}:=\{x\in\rd : \norm{x}\leq \alpha\}$.




\section{Problem formulation}\label{sec:formulation}

\subsection{Problem formulation}
Let us consider a discrete-time dynamical system $(\xin,T)$ on $\rd$ where
\begin{itemize}
\item $\xin$ is the nonempty subset of $\rd$ of initial conditions;
\item $T$ is a nonzero self-map on $\rd$ that updates the state-variable.  
\end{itemize}
The system is autonomous; it evolves without any control, disturbance, or external input. In this paper, we are interested in the reachable values that maximize a given function. Then, given $\varphi:\rd\to\rd$, we define the {\it discrete-time peak computation problem} $\dpcp(\xin,T,\varphi)$ as follows:
\begin{equation}
\label{pcpprob}
\tag{DPCP}
\begin{array}{lcl}
\displaystyle{\Max_{x,k}} & &\displaystyle{\varphi(T^k(x))}\\
           & \text{ s. t.}& \displaystyle{x\in\xin, k\in\nn}
           \end{array} 
\end{equation}
where $T^k$ is the $k$-fold composition of $T$ if $k$ is not null, and the identity, if $k=0$.
 
Like any optimization problem, to solve a problem of form~\eqref{pcpprob}, we must compute
\begin{itemize}
\item its optimal value i.e., $\sup\{\varphi(T^k(x)): x\in\xin,\ k\in\nn\}$;
\item its optimal solutions i.e., couples $(\overline{x},\overline{k})\in \xin\times \nn$ such that\[\varphi(T^{\overline{k}}(\overline{x}))=   \sup\{\varphi(T^k(x)): x\in\xin,\ k\in\nn\}.\]
\end{itemize}
An optimal solution $(x,k)$ of $\dpcp(\xin,T,\varphi)$ is thus composed of an initial condition $x$ leading to a peak and its associated date $k$ of peak realization.

Without loss of generality, we can assume that $\varphi(0)=0$. Indeed, for the problem $\dpcp(\xin,T,\varphi)$, the optimal value is equal to the sum of the optimal values of $\dpcp(\xin,T,\varphi-\varphi(0))$ and $\varphi(0)$. Moreover, the optimal solutions of $\dpcp(\xin,T,\varphi)$ and $\dpcp(\xin,T,\varphi-\varphi(0))$ are equal.

The optimal value of $\dpcp(\xin,T,\varphi)$ can be described using a sequence of optimal values. Indeed, we can introduce a static classical optimization problem where $k\in\nn$ is fixed:
\begin{equation}
\label{pcpstatic}
\tag{$P_k$}
\begin{array}{lcl}
\displaystyle{\Max_{x}} & &\displaystyle{\varphi(T^k(x))}\\
           & \text{ s. t.}& \displaystyle{x\in\xin}
           \end{array} 
\end{equation}
Then, we introduce the sequence $\nu=(\nu_k)_{k\in\nn}$ defined, for all $k\in\nn$, by:
\begin{equation}
\label{nudef}
\nu_k:=\sup\{\varphi(T^k(x)): x\in\xin\}
\end{equation}
and its supremum:
\begin{equation}
\label{nuopt} 
\nuopt:=\sup\{\nu_k:k\in\nn\}.
\end{equation}
From these notations, $\nuopt$ is equal to the optimal value of $\dpcp(\xin,T,\varphi)$ whereas $\nu_k$ is the optimal value of the static optimization problem~\eqref{pcpstatic}. Solving $\dpcp(\xin,T,\varphi)$ boils down to compute $\nuopt$, an integer $\overline{k}$ such that $\nu_{\overline k}=\nuopt$ and an optimal solution $x\in\xin$ of $(P_{\overline{k}})$. We concentrate our efforts on the computation of the integer $\overline{k}$ as the computation of the optimal solution $x\in\xin$ of $(P_{\overline{k}})$ relies on an optimization solver (when a suitable one is available). The approach proposed in this paper consists in computing a \emph{stopping integer for }  $\dpcp(\xin,T,\varphi)$.
\begin{defi}[Stopping integer]
\label{def:stopping}
An integer $K\in\nn$ is a stopping integer for $\dpcp(\xin,T,\varphi)$ if, for the sequence $\nu$ defined in Equation~\eqref{nudef}:
\[
\nuopt=\max\{\nu_k : k=0,\ldots,K\}.
\]
\end{defi}
If there is no ambiguity, we will simply say that $K$ is a stopping integer for $\nu$.

Obviously, any stopping integer $K$ for $\nu$ is greater than $\overline k$ and $\overline{k}$ is the smallest stopping integer for $\nu$. Following~\cite{adje2025maximizationrealsequences}, if the set $\{j\in\nn : \nu_j>\limsup_{n\to +\infty} \nu_n\}$ is nonempty, a stopping integer for $\nu$ can deduced from the formula:
\begin{equation}
\label{mainformulaaux}
\min_{\substack{k\in \nn\\ \nu_k>h(0)}} \dfrac{\ln(h^{-1}(\nu_k))}{\ln(\beta)}
\end{equation}
where $(h,\beta)$ is such that
\begin{enumerate}
\item $h:\rr\to \rr$ is strictly increasing and continuous on $[0,1]$;
\item $\beta\in (0,1)$;
\item for all $k\in\nn$, $\nu_k\leq h(\beta^k)$;
\item there exists $k\in\nn$ such that $\nu_k>h(0)$.
\end{enumerate}
In this paper, we extend the techniques developed in~\cite{adje13052025} for stable affine systems and quadratic objective functions to nonlinear stable systems and nonquadratic objective functions. In~\cite{adje13052025}, we construct $(h,\beta)$ from quadratic Lyapunov functions. In this paper, we  propose to construct $(h,\beta)$ from any $\klcls$ stability certificate and any Lyapunov function associated with the system $(\xin,T)$. The interested reader can consult~\cite{kelletdcds15} and references therein for complementary readings about those tools from stability theory.     
\subsection{Practical limitations}
From a theoretical perspective, static optimization problems \eqref{pcpstatic} are classical optimization problems; consequently, their study and resolution are relegated to optimization theory. 
For example, when $\varphi \circ T^k$ is concave and $\xin$ is convex, the problem~\eqref{pcpstatic} is said to be convex, and numerous methods exist (see e.g.,~\cite{ben2001lectures,bertsekas2015convex}), or when $\varphi\circ T^k$ is polynomial and $\xin$ a basic semialgebraic set (see e.g.,~\cite{anjos2011handbook,Lasserre_2015}), the problem~\eqref{pcpstatic} falls into the class of polynomial optimization problems.

In practice, the most critical challenge for the computation of $\nu_k$ lies before the optimization procedure. 
Indeed, when $T$ is nonlinear, it is difficult to obtain a closed form for $T^k$. For example, for logistic sequences, closed forms are only known for particular cases (see e.g.,~\cite{gutierrez2010}). Some specific difference equations or systems require intensive efforts to obtain closed forms (see e.g.,~\cite{ELSAYED2012378,articleH,stevic2014representation} and references therein). We could use instead at least an exact formulation of $T^k(\xin)$ when $\xin$ is an infinite set. However, its computation is limited in practice. For example,  in~\cite{magron2015semidefinite}, the authors approximate the polynomial image of a basic semialgebraic set using a sequence of semidefinite programs. The authors obtain the convergence in $L^1$ sense of the sequence of approximations to the polynomial image of the set. 

Hopefully, when $T$ is nonlinear, there exist some situations making the computation of $T^k(\xin)$ "easier" in practice. For example, if $\xin$ is a finite set or $T$ is conjugate to a linear dynamics. Conjugate\footnote{In the literature, the term {\it topologically conjugate} is employed, but in this context, the bijection and its inverse are required to be continuous. In our case, continuity is not mandatory from an optimization perspective.} means that there exists a bijection $B$ and a linear map identified with its matrix $A$ such that for all $x\in\rd$, $T(x)=B^{-1}( A B(x))$. In this case, we simply have for all  $x\in\rd$ for all $k\in\nn$, $T^k(x)=B^{-1}(A^k B(x))$. 

In this paper, the theoretical part does not require finiteness concerning $T$ or $\xin$. We only require the finiteness of $\nu_k$. In our {\it numerical illustrations}, to concentrate on the novelty and avoid entering too deep into optimization tools, we will only consider finite initial condition sets $\xin$. The consideration of more general initial condition sets in practice will be addressed in future works. 

The developments made in this paper require real-valued sequences; hence the only assumption made in this paper is as follows:
\begin{assumption}
\label{mainassum}
For all $k\in\nn,\ \nu_k=\sup\left\{\varphi(T^k(x)) : x\in\xin\right\}$ is finite.
\end{assumption}

A simple situation where Assumption~\ref{mainassum} holds when $\xin$ is a bounded set, $T$ maps from bounded sets to bounded sets, and $\varphi$ is upper semicontinuous.

\subsection{Running example}
\label{subsec:running}
Throughout this paper, we experiment with our main tools on a nonlinear system found in~\cite{li2014computation}. The dynamics is defined as:
\begin{equation}
\label{runndyn}
\speh:\rr^2\ni x\mapsto \dfrac{1}{8}\begin{pmatrix} \norm{x}^2-1 & -1 \\ 1 & \norm{x}^2-1 \end{pmatrix}x
\end{equation}
To write a discrete-time peak computation problem, we also need an initial conditions set and an objective function. Before discussing the choice of the initial conditions set and the objective functions, it should be noted that starting outside a specific ball makes the norm of the state variable blow up. First, by a simple calculus, we obtain, for all $z\in\rr^2$:
\[
\norm{\speh(x)}^2=\dfrac{1}{64}\norm{x}^2\left(1+(\norm{x}^2-1)^2\right).
\]
Then, by introducing the following two functions:
\begin{equation}
\label{auxfun}
\spef:\rr_+\ni s\to \dfrac{1}{64}s(1+(s-1)^2)\ \text{ and } \speg:\rr_+\ni s\to \left\{\begin{array}{cr}\dfrac{\spef(s)}{s} & \text{if } s>0\\ \\
\dfrac{1}{32} & \text{if } s=0
\end{array}\right.,
\end{equation}
we get $\norm{ \speh(x)}^2=\spef(\norm{x}^2)$ for all $x\in\rr^2$. Now as $\norm{\speh(x)}^2/\norm{x}^2=\speg(\norm{x}^2)$ for all nonzero $x\in\rr^2$ and since the assertion ($\speg(s)<1$ and $s>0$) is equivalent to $s\in (0,\sqrt{63}+1)$, we conclude that $\norm{\speh(x)}<\norm{x}$ if and only if $\norm{x}^2\in (0,\sqrt{63}+1)$. Hence, we introduce:
\begin{equation}
\label{radi}
\orho:=\sqrt{63}+1\simeq 8.9373.
\end{equation}
Finally, we have for all $r\in(0,\orho)$, $\speh(\cball{\sqrt{r}})\subseteq \cball{\sqrt{r}}$.    

In future examples, an initial conditions set will be a subset $\spex$ of $\rr^2$ satisfying 
\begin{equation}
\label{initassum}
\sup\{\norm{x} : x\in \spex\}\in (0,\sqrt{\orho}).
\end{equation} 
The exact definition of $\spex$ will depend on the statements that we want to highlight.

An objective function is also required. In the examples, we will analyze the behavior of each coordinate separately; thus, the objective functions are the coordinate functions: 
\begin{equation}
\label{optrun}
\pi_1:\rr^2\ni x=(x_1,x_2)^\intercal \mapsto x_1\text{ and }\pi_2:\rr^2\ni x=(x_1,x_2)^\intercal \mapsto x_2.
\end{equation}
For an initial condition set $\spex$ satisfying the condition~\eqref{initassum}, we will define two discrete-time peak computation problems $\dpcp(\spex,\speh,\pi_1)$ and $\dpcp(\spex,\speh,\pi_2)$. The optimal value of $\dpcp(\spex,\speh,\pi_1)$ is written as
\begin{equation}
\label{eq:dpcpw1}
\omega_{\spex,{\rm opt}}^1:=\sup_{k\in\nn} \omega_{\spex,k}^1 \text{ where } 
\omega_{\spex,k}^1:=\sup_{x\in \spex} \pi_1(\speh^k(x)).
\end{equation}
The optimal value of $\dpcp(\spex,\speh,\pi_2)$ is written similarly:
\begin{equation}
\label{eq:dpcpw2}
\omega_{\spex,{\rm opt}}^2:=\sup_{k\in\nn} \omega_{\spex,k}^2 \text{ where } 
\omega_{\spex,k}^2:=\sup_{x\in \spex} \pi_2(\speh^k(x)).
\end{equation}
The condition~\eqref{initassum} for $\spex$ ensures that for all $k\in\nn$ and all $x\in \spex$, $\pi_1(\speh^k(x))\leq \sup\{\norm{x}: x\in \spex\}$ and $\pi_2(\speh^k(x))\leq \sup\{\norm{x}: x\in \spex\}$. This guarantees that Assumption~\ref{mainassum} holds for the sequences
\begin{equation}
\label{eq:omega}
\omega_\spex^1:=(\omega_{\spex,k}^1)_{k\in\nn} \text{ and }\omega_\spex^2:=(\omega_{\spex,k}^2)_{k\in\nn}.
\end{equation}

\section{Preliminary results}\label{sec:preliminaries}
Before considering the main results of this paper, we recall some useful theoretical results. First, in Subsection~\ref{subsec:realseq}, we bring details about the formula~\eqref{mainformulaaux} obtained in~\cite{adje2025maximizationrealsequences}. Second, in Subsection~\ref{subsec:compare}, we introduce useful material about comparison functions, which constitute the underlying tool behind $\klcls$ and Lyapunov stability. 
\subsection{Results on the supremum of a real sequence}
\label{subsec:realseq}
The developments made in~\cite{adje2025maximizationrealsequences} concern general real sequences, and in this subsection, we consider $u\in\rr^\nn$ for which we aim to compute:
\begin{equation}
\label{realseqpb}
\sup_{n\in\nn} u_n \text{ and } k\in\nn\text{ s.t. } u_k=\sup_{n\in\nn} u_n 
\end{equation}
\subsubsection{Notations and basic results}
We introduce the subset of $\rr^\nn$ consisting of sequences bounded from above: 
\[
\Lambda:=\{u=(u_0,u_1,\ldots)\in\rr^\nn: \sup_{n\in\nn} u_n<+\infty\}.
\]
For an element of $\Lambda$, we are interested in computing the supremum of its terms.
\begin{prop}
\label{suplimsup}
$u\in\Lambda \iff \displaystyle{\limsup_{n\to +\infty} u_n\in\rr\cup\{-\infty\}}$.
\end{prop}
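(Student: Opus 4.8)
The plan is to work directly from the definition $\limsup_{n\to+\infty} u_n = \inf_{N\in\nn}\,\sup_{n\ge N} u_n$, understood as an element of $\overline{\rr}$, and to notice that the assertion ``$\limsup_{n\to+\infty} u_n\in\rr\cup\{-\infty\}$'' is merely the assertion ``$\limsup_{n\to+\infty} u_n < +\infty$''. Thus both sides of the claimed equivalence are finiteness conditions, and the whole proof reduces to comparing $\sup_{n\in\nn} u_n$ with the tail suprema $\sup_{n\ge N} u_n$.

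For the forward implication, I would set $M:=\sup_{n\in\nn} u_n$ and assume $M<+\infty$. For every $N\in\nn$ the set $\{u_n : n\ge N\}$ is a subset of $\{u_n : n\in\nn\}$, hence $\sup_{n\ge N} u_n\le M$. Taking the infimum over $N$ gives $\limsup_{n\to+\infty} u_n\le M<+\infty$, which is exactly $\limsup_{n\to+\infty} u_n\in\rr\cup\{-\infty\}$.

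For the converse, I would assume $\limsup_{n\to+\infty} u_n<+\infty$ and write $v_N:=\sup_{n\ge N} u_n\in\overline{\rr}$ for each $N\in\nn$. The sequence $(v_N)_{N\in\nn}$ is non-increasing and its infimum, which equals $\limsup_{n\to+\infty} u_n$, is strictly less than $+\infty$; hence there exists $N_0\in\nn$ with $v_{N_0}<+\infty$. Then $\sup_{n\in\nn} u_n=\max\{u_0,\ldots,u_{N_0-1},v_{N_0}\}$, a maximum of finitely many reals together with one finite quantity, so $\sup_{n\in\nn} u_n<+\infty$, i.e.\ $u\in\Lambda$.

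The only delicate point — and the closest thing to an obstacle — is the bookkeeping in $\overline{\rr}$: for small indices $N$ the tail supremum $v_N$ may a priori equal $+\infty$, so one cannot simply invoke ``$v_0<+\infty$'' but must instead produce an index $N_0$ beyond which finiteness holds. Monotonicity of $(v_N)_{N\in\nn}$ together with the hypothesis on its infimum supplies precisely such an $N_0$. The degenerate case $\limsup_{n\to+\infty} u_n=-\infty$ needs no separate treatment, since it still satisfies $\limsup_{n\to+\infty} u_n<+\infty$ and the same argument applies verbatim.
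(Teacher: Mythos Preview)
Your proof is correct. The paper itself does not supply a proof of this proposition; it is stated in Subsection~\ref{subsec:realseq} as a basic fact recalled from~\cite{adje2025maximizationrealsequences}. Your argument is the standard elementary one and is entirely adequate. One small remark: in the converse direction, monotonicity of $(v_N)_{N\in\nn}$ is not actually needed to produce $N_0$ with $v_{N_0}<+\infty$; the bare fact that $\inf_N v_N<+\infty$ already forces some $v_{N_0}$ to be finite, since an infimum of a set consisting only of $+\infty$ would itself be $+\infty$.
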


For $u\in\Lambda$, we define $\agmx(u):=\{k\in\nn: u_k=\displaystyle{\sup_{n\in\nn} u_n}\}$ and we introduce, for $u\in\Lambda$, the two sets of ranks:
\begin{equation}
\label{eq:gsls}
\gsls{u}:=\left\{k\in\nn: u_k>\limsup_{n\to +\infty} u_n\right\}\ \text{ and }\ \gs_u:=\left\{k\in\nn: \max_{0\leq j\leq k} u_j>\sup_{j>k} u_j\right\}
\end{equation}
and we define 
\begin{equation}
\label{eq:bigksu}
\bigks_u=\inf\gs_u.
\end{equation}
We recall that the infimum of the empty set is equal to $+\infty$. Hence, $\bigks_u<+\infty$ if and only if $\gs_u\neq\emptyset$. Furthermore, if $\gs_u$ is nonempty, then $\bigks_u\in \gs_u$.
\begin{prop}
\label{argmaxsimple}
Let $u\in\Lambda$. The following assertions hold:
\begin{enumerate}
\item $\gs_u\neq\emptyset\iff \gsls{u}\neq \emptyset$;
\item $\gs_u=\emptyset\iff \sup_{k\in\nn} u_k=\limsup_{n\to +\infty} u_n$;
\item If $\gs_u\neq\emptyset$ then $\bigks_u=\max\agmx(u)$. 
        \end{enumerate}
\end{prop}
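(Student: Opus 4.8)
Write $L:=\limsup_{n\to+\infty}u_n$; since $u\in\Lambda$ we have $L\in\rr\cup\{-\infty\}$ by Proposition~\ref{suplimsup}, and I will use repeatedly that $L=\inf_{N\in\nn}\sup_{n>N}u_n$, so that $L\le\sup_{n>m}u_n$ for every $m\in\nn$ and, whenever $u_k>L$, there is some $N$ with $\sup_{n>N}u_n<u_k$. The plan is to prove item~1 directly and then deduce items~2 and~3 from it, using also the fact recorded just before the statement that $\bigks_u\in\gs_u$ when $\gs_u\neq\emptyset$. For item~1: if $\gsls{u}\neq\emptyset$, choose $k$ with $u_k>L$ and then $N$ with $\sup_{n>N}u_n<u_k$; the index $m:=\max(k,N)$ satisfies $\max_{0\le j\le m}u_j\ge u_k>\sup_{n>N}u_n\ge\sup_{n>m}u_n$, hence $m\in\gs_u$. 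Conversely, if $m\in\gs_u$, let $k\in\{0,\dots,m\}$ attain $\max_{0\le j\le m}u_j$; then $u_k>\sup_{n>m}u_n\ge L$, so $k\in\gsls{u}$. For item~2, combine item~1 with the trivial bound $L\le\sup_k u_k$: $\gs_u=\emptyset$ is equivalent to $\gsls{u}=\emptyset$, i.e.\ to $u_k\le L$ for all $k$, i.e.\ to $\sup_k u_k\le L$, i.e.\ to $\sup_k u_k=L$.

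For item~3, assume $\gs_u\neq\emptyset$ and put $M:=\sup_n u_n$. From $\bigks_u\in\gs_u$ we have $\max_{0\le j\le\bigks_u}u_j>\sup_{j>\bigks_u}u_j$; since $M=\max\!\big(\max_{0\le j\le\bigks_u}u_j,\ \sup_{j>\bigks_u}u_j\big)$, this forces $\max_{0\le j\le\bigks_u}u_j=M$ together with $\sup_{j>\bigks_u}u_j<M$. The first equality says $M$ is attained at some rank $\le\bigks_u$, so $\agmx(u)\neq\emptyset$; the strict inequality says no rank $>\bigks_u$ attains $M$, so $\agmx(u)\subseteq\{0,\dots,\bigks_u\}$ and $\max\agmx(u)$ exists and is $\le\bigks_u$. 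It remains to see $\bigks_u\in\agmx(u)$, i.e.\ $u_{\bigks_u}=M$. If not, $u_{\bigks_u}<M$, and the rank $j^\star\le\bigks_u$ with $u_{j^\star}=M$ must satisfy $j^\star<\bigks_u$; then $m:=\bigks_u-1$ obeys $m\ge j^\star\ge0$, so $\max_{0\le j\le m}u_j=M$, while $\sup_{j>m}u_j=\max\!\big(u_{\bigks_u},\ \sup_{j>\bigks_u}u_j\big)<M$, whence $m\in\gs_u$ with $m<\bigks_u=\inf\gs_u$ --- a contradiction. Hence $u_{\bigks_u}=M$ and $\bigks_u=\max\agmx(u)$.

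Most of this is routine manipulation of the definitions of $\limsup$, $\gs_u$, and $\gsls{u}$. The step needing genuine care is the one in item~3 where, from the single membership $\bigks_u\in\gs_u$, one reads off both that the supremum $M$ is attained and that $\sup_{j>\bigks_u}u_j$ is \emph{strictly} below $M$, followed by the short contradiction at rank $\bigks_u-1$ that upgrades ``$\bigks_u$ dominates every later term'' to ``$\bigks_u$ actually realizes the maximum.'' Beyond keeping the strict and non-strict inequalities straight, I do not anticipate any serious obstacle.
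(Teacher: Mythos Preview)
Your proof is correct. The paper itself does not supply a proof of this proposition: it is recalled from the companion reference~\cite{adje2025maximizationrealsequences} (the surrounding subsection explicitly says ``All details can be found in~\cite{adje2025maximizationrealsequences}''), so there is no in-paper argument to compare against. Your derivation of all three items directly from the definitions of $\gsls{u}$, $\gs_u$, and the characterization $L=\inf_{N}\sup_{n>N}u_n$ is clean; the contradiction at rank $\bigks_u-1$ in item~3 is the natural way to upgrade membership in $\gs_u$ to actual attainment of the supremum, and the edge case $\bigks_u=0$ is harmless since then $u_0=\max_{0\le j\le 0}u_j=M$ already.
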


\subsubsection{A formula based on pairs of strictly increasing continuous functions and convergent geometric sequences}
\label{bijections}
In this subsection, we present how to construct the formula~\eqref{mainformulaaux}. All details can be found in~\cite{adje2025maximizationrealsequences}. The key tool to compute the supremum of a sequence $u\in\Lambda$ is the existence of a particular sequence $(h(\beta^k))_{k\in\nn}$ that is an upper bound on $u$. The particularity comes from the fact that $h$ is strictly increasing and continuous on $[0,1]$ and $\beta\in (0,1)$. We then introduce the following set of real functions:
\[
\fset:=\{h:\rr\mapsto \rr : \text{ is strictly increasing and continuous on } [0,1] \}.
\]
It should be noted that we can define a function that is strictly increasing and continuous on $[0,1]$ on a set containing $[0,1]$ and then extend it as we want on the entire real line. Therefore, sometimes, we will partially define elements of $\fset$ on a set greater than $[0,1]$ but not everywhere on the real line.
  
We also introduce, for $u\in\rr^{\nn}$ the following sets:
\[
\funset(u):=\{(h,\beta)\in \fset \times (0,1): u_k\leq h(\beta^k),\ \forall\, k\in\nn\} 
\]
and
\[ 
\secfunh(u):=\{h\in \fset : \exists\, \beta\in (0,1) \text{ s.t. } (h,\beta)\in \funset(u)\}\enspace .
\]
Finally, we introduce for $u\in\rr^\nn$ and $h\in\secfunh(u)$:
\[ 
\secfunb(u,h):=\{\beta\in (0,1) : (h,\beta)\in \funset(u)\}\enspace .
\]
As all elements of $\secfunh(u)$  are strictly increasing and continuous on $[0,1]$, they have an inverse on $[0,1]$. For the sake of simplicity, we will simply denote by $h^{-1}$ the inverse of $h\in\fset$. Moreover, the fact that the elements of $\secfunh(u)$ are strictly increasing and continuous allows for simple properties. 
\begin{prop}
\label{hzero}
Let $u\in\Lambda$. Then :
\begin{enumerate}
\item For all $h\in\secfunh(u)$, we have $\limsup_{k\to +\infty} u_k\leq h(0)$;
\item For all $k\in\nn^*$, $u_k<h(1)$ and $u_0\leq h(1)$;
\item If there exists $h\in\secfunh(u)$ such that $u_0=h(1)$ then $u_0=\max_{k\in\nn} u_k$.
\end{enumerate}
\end{prop}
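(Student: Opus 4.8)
The plan is simply to unpack the definition of $h\in\secfunh(u)$: it means there exists $\beta\in(0,1)$ such that $u_k\le h(\beta^k)$ for every $k\in\nn$, with $h$ strictly increasing and continuous on $[0,1]$. I would fix such a $\beta$ once and for all; the three assertions then follow by feeding the geometric sequence $(\beta^k)_{k\in\nn}$ into $h$ and using two elementary facts: $\beta^k\to 0^+$ because $\beta\in(0,1)$, and $\beta^k<1=\beta^0$ for every $k\ge 1$.

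For the first assertion, I would use continuity of $h$ at the endpoint $0$ to get $h(\beta^k)\to h(0)$, then pass to the $\limsup$ in $u_k\le h(\beta^k)$, invoking monotonicity of $\limsup$ and the fact that a convergent sequence equals its own $\limsup$; this yields $\limsup_{k\to+\infty}u_k\le h(0)$. (By Proposition~\ref{suplimsup}, since $u\in\Lambda$, this $\limsup$ lies in $\rr\cup\{-\infty\}$, so the inequality is well posed.) For the second assertion, for $k\in\nn^*$ one has $0<\beta^k<1$, so strict monotonicity of $h$ on $[0,1]$ gives $h(\beta^k)<h(1)$, whence $u_k\le h(\beta^k)<h(1)$; and for $k=0$ simply $u_0\le h(\beta^0)=h(1)$. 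For the third assertion, if some $h\in\secfunh(u)$ satisfies $u_0=h(1)$, then the second assertion gives $u_k<h(1)=u_0$ for all $k\in\nn^*$ while $u_0\le u_0$ trivially, so $u_0$ dominates every term; hence $\sup_{k\in\nn}u_k=u_0$, attained at $k=0$, i.e. $u_0=\max_{k\in\nn}u_k$.

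I do not expect any genuine obstacle: each item is a one-line manipulation. The only point worth flagging is the interplay between the two boundary behaviours of $h$ on $[0,1]$ — continuity is used exactly at $0$ (the limit point of $(\beta^k)_k$) to control the asymptotics in the first item, whereas strict monotonicity near $1$ is what makes the inequality in the second item strict, and it is precisely that strictness which forces, in the third item, the maximum to be attained (uniquely) at the index $0$ as soon as $u_0=h(1)$.
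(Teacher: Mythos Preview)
Your proof is correct. The paper does not actually include a proof of Proposition~\ref{hzero}; it is stated as a recalled result from~\cite{adje2025maximizationrealsequences}, so there is no in-paper argument to compare against, but your reasoning is the natural and expected one.
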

We can also fully characterize the set $\Lambda$ from the nonemptiness of $\funset(u)$.
\begin{prop}
\label{nonemptyfun}
$u\in\Lambda$ if and only if $\funset(u)\neq \emptyset$.
\end{prop}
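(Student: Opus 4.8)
The plan is to prove the two implications separately; both are short and elementary.

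For $\funset(u)\neq\emptyset\Rightarrow u\in\Lambda$: pick any pair $(h,\beta)\in\funset(u)$. For every $k\in\nn$ the point $\beta^k$ lies in $(0,1]\subseteq[0,1]$, and since $h$ is increasing on $[0,1]$ we get $h(\beta^k)\leq h(1)$, hence $u_k\leq h(\beta^k)\leq h(1)$. Thus $\sup_{n\in\nn}u_n\leq h(1)<+\infty$, i.e. $u\in\Lambda$. This is essentially the one-line estimate behind Proposition~\ref{hzero}(2); I would reprove it directly here rather than invoke that proposition, which is stated only for sequences already known to belong to $\Lambda$, so citing it would risk circularity.

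For $u\in\Lambda\Rightarrow\funset(u)\neq\emptyset$: set $M:=\sup_{n\in\nn}u_n$, a real number by definition of $\Lambda$, and take $h:\rr\to\rr$, $h(x)=M+x$, together with $\beta:=1/2$. Then $h$ is strictly increasing and continuous on all of $\rr$, in particular on $[0,1]$, so $h\in\fset$, and $\beta\in(0,1)$. For every $k\in\nn$ we have $h(\beta^k)=M+2^{-k}\geq M\geq u_k$, hence $(h,\beta)\in\funset(u)$, which is therefore nonempty.

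The statement hides no genuine obstacle. The only subtlety worth flagging is that, in the converse direction, one is tempted to build $h$ so that the bounds $h(\beta^k)$ track the $u_k$ tightly --- this is exactly what makes the formula~\eqref{mainformulaaux} informative --- for instance by interpolating through the points $(\beta^k,\, s_k)$ with $s_k:=\sup_{j\geq k}u_j$, perturbed slightly to make the interpolant strictly increasing. Such a construction must be handled with care at $x=0$ when $\limsup_{n\to+\infty}u_n=-\infty$, since the interpolant still has to be finite and continuous on the closed interval $[0,1]$. For the mere nonemptiness of $\funset(u)$ this refinement is unnecessary, and the crude affine choice above sidesteps the issue entirely; the tighter constructions are the business of the later $\klcls$ and Lyapunov sections.
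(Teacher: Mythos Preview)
Your proof is correct. The paper itself does not prove Proposition~\ref{nonemptyfun}: it is stated in Section~\ref{sec:preliminaries} as a result recalled from~\cite{adje2025maximizationrealsequences}, with the blanket remark that ``all details can be found'' there. Your two short arguments (the bound $u_k\leq h(\beta^k)\leq h(1)$ for one direction, and the crude affine $h(x)=M+x$ with $\beta=1/2$ for the other) are exactly the natural elementary verifications, and your caution about not invoking Proposition~\ref{hzero} to avoid circularity is well placed.
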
  
In the formula~\eqref{mainformulaaux}, the natural logarithm is defined when $0<h^{-1}(\nu_k))$, which is the same as $h(0)<\nu_k$. Then, we must ensure that the function $h\in\funset(u)$ satisfies $h(0)<\nu_k$ for some $k\in\nn$. This provides the notion of {\it usefulness}.
\begin{defi}[Useful strictly increasing continuous functions]
\label{useful}
Let $u\in\Lambda$ such that $\gsls{u}\neq \emptyset$. A function $h\in \secfunh(u)$ is said to be useful for $u$ if the set 
\begin{equation}
\label{residual}
\res(u,h):=\{k\in\nn : u_k>h(0)\}
\end{equation}
is nonempty. 
By extension, $(h,\beta)\in\funset(u)$ is useful for $u$ if $h$ is useful for $u$.
\end{defi}
From the first statement of Proposition~\ref{hzero}, if $h$ is useful for $u$, there exists $k\in\nn$ such that $u_k>h(0)\geq \limsup_{n\to +\infty} u_n$ i.e., $\gsls{u}\neq \emptyset$. 

The following result aims to prove that there exists an optimal useful affine function for $u$ when $\gsls{u}\neq \emptyset$. 
\begin{theorem}[Existence of a useful optimal affine function] 
\label{funatt}
Let $u\in\Lambda$ such that $\gsls{u}\neq \emptyset$. Then, there exist $a>0$, $b\in (0,1)$ and $c\in\rr$ such that:
\begin{enumerate}
\item $\{k\in\nn : u_k>c\}\neq \emptyset$;
\item $u_k\leq a b^k+c$ for all $k\in\nn$;
\item $u_{\bigks_u}=ab^{\bigks_u}+c$.
\end{enumerate}
\end{theorem}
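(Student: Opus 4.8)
The plan is to exhibit the pair $(h,\beta)$ of formula~\eqref{mainformulaaux} in the simplest possible form: an affine map $h(t)=at+c$ with $a>0$, a ratio $\beta=b$ that I may pick freely in $(0,1)$, and the resulting bound $u_k\le ab^k+c$ made tight exactly at the largest maximizing index of $u$, which will be the index $\bigks_u$ of item~3. Because $h(0)=c$, once $a>0$ and the bound touches a term equal to $\sup_n u_n$, usefulness (item~1) is automatic; so the heart of the matter is only items~2 and~3.

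First I would fix notation. Since $\gsls{u}\neq\emptyset$, Proposition~\ref{argmaxsimple} yields $\gs_u\neq\emptyset$, so $K:=\bigks_u$ is a genuine integer and, by the same proposition, $K=\max\agmx(u)$; in particular the supremum is attained at $K$. Set $M:=u_K=\sup_{n\in\nn}u_n$, which is finite as $u\in\Lambda$. Since $\gsls{u}\neq\emptyset$ and $M=\sup_n u_n$, the index $K$ lies in $\gsls{u}$, that is, $M>\limsup_{n\to+\infty}u_n$; this strict gap is the one feature of the hypothesis I will really use.

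Next I would reduce the problem. Fix any $b\in(0,1)$. Requiring $a>0$, $c\in\rr$ with $u_k\le ab^k+c$ for all $k$ and equality at $k=K$ is the same as taking $c:=M-ab^K$ and asking that
\[
u_k-M\le a\,(b^k-b^K)\qquad\text{for all }k\in\nn .
\]
For $k\le K$ this holds for every $a>0$, since $u_k-M\le 0\le a(b^k-b^K)$, with equality precisely at $k=K$. For $k>K$ one has $b^k-b^K<0$ and, because $K=\max\agmx(u)$, also $u_k<M$, so the inequality rewrites as $a\le r_k$ with $r_k:=\dfrac{M-u_k}{b^K-b^k}>0$. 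Hence everything comes down to showing $a^\ast:=\inf_{k>K}r_k>0$; then $a:=a^\ast$ and $c:=M-ab^K$ will do.

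This positivity is the step I expect to be the (mild) obstacle, and it is exactly where $\gsls{u}\neq\emptyset$ is needed. From $0<b^K-b^k<b^K$ I get $r_k>(M-u_k)/b^K$ for all $k>K$, while $\liminf_{k\to+\infty}(M-u_k)=M-\limsup_{n\to+\infty}u_n>0$; thus $\liminf_{k\to+\infty}r_k\ge(M-\limsup_n u_n)/b^K>0$ (the same reasoning works, with $r_k\to+\infty$, when $\limsup_n u_n=-\infty$). Consequently all but finitely many of the already-positive numbers $r_k$ exceed a common positive constant, and the finitely many others are individually positive, so $a^\ast>0$ (and $a^\ast\le r_{K+1}<+\infty$). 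It then remains only to record the verification: item~3 is the defining equality $u_K=ab^K+c$; item~2 is the reduction above (free for $k\le K$, equivalent to $a\le r_k$ for $k>K$); and item~1 holds because $ab^K>0$ forces $c<M=u_K$, so $\bigks_u\in\{k\in\nn:u_k>c\}$. Finally, $t\mapsto at+c$ with $a>0$ is strictly increasing and continuous on $\rr$, hence on $[0,1]$, so $(t\mapsto at+c,\,b)$ genuinely lies in $\funset(u)$ and is useful for $u$.
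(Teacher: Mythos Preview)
Your argument is correct. The reduction to choosing $c=M-ab^K$ and then to the single scalar condition $a\le r_k=(M-u_k)/(b^K-b^k)$ for $k>K$ is clean, and the positivity of $a^\ast=\inf_{k>K}r_k$ is properly justified: each $r_k$ is strictly positive because $K=\max\agmx(u)$ forces $u_k<M$ for $k>K$, and $\liminf_{k\to\infty}r_k\ge(M-\limsup_n u_n)/b^K>0$ from the hypothesis $\gsls{u}\neq\emptyset$, so the infimum of these positive numbers cannot be zero. Items~1--3 then follow as you say.

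As for the comparison: the present paper does not supply its own proof of this theorem; it is stated as a recalled result from~\cite{adje2025maximizationrealsequences}. Your proof is therefore a self-contained substitute. One minor remark: you fix $b\in(0,1)$ arbitrarily and then optimise $a$, which is perfectly sufficient for the statement; the freedom in $b$ is simply not exploited. If one cared about sharpness of the affine bound (as suggested by the word ``optimal'' in the title and by Theorem~\ref{formulaopt}), one could further adjust $b$, but the theorem as stated does not require this.
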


\begin{corollary}
\label{maincoro}
Let $u\in\Lambda$ such that $\gsls{u}\neq \emptyset$. Then, there exists a pair $(h,\beta)\in\funset(u)$ useful for $u$ such that $u_{\bigks_u}=h(\beta^{\bigks_u})$.
\end{corollary}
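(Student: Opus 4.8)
The plan is to derive Corollary~\ref{maincoro} directly from Theorem~\ref{funatt} by packaging the affine function $k\mapsto ab^k+c$ produced there into an element of $\fset$. First I would take the triple $(a,b,c)$ with $a>0$, $b\in(0,1)$, $c\in\rr$ guaranteed by Theorem~\ref{funatt}, and define $h:\rr\to\rr$ by $h(t):=at+c$, and set $\beta:=b$. Since $a>0$, the map $h$ is strictly increasing and continuous on all of $\rr$, hence in particular on $[0,1]$, so $h\in\fset$; and $\beta=b\in(0,1)$ by construction. The bound $u_k\le ab^k+c=h(\beta^k)$ for all $k\in\nn$ (item~2 of the theorem) says precisely that $(h,\beta)\in\funset(u)$.

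Next I would check usefulness. By item~1 of Theorem~\ref{funatt}, $\{k\in\nn:u_k>c\}\neq\emptyset$; since $h(0)=a\cdot 0+c=c$, this set is exactly $\res(u,h)=\{k\in\nn:u_k>h(0)\}$, which is therefore nonempty. Because we are in the case $\gsls{u}\neq\emptyset$, Definition~\ref{useful} applies and $(h,\beta)$ is useful for $u$. Finally, item~3 of the theorem gives $u_{\bigks_u}=ab^{\bigks_u}+c=h(\beta^{\bigks_u})$, which is the remaining claimed equality. This closes the proof.

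I do not expect any genuine obstacle here: the corollary is essentially a restatement of Theorem~\ref{funatt} once one observes that an affine function with positive slope is a legitimate element of $\fset$ and that its value at $0$ is the constant term $c$. The only thing to be mildly careful about is bookkeeping — matching $h(0)=c$ so that the set in item~1 of the theorem coincides with $\res(u,h)$ in Definition~\ref{useful}, and confirming that $\bigks_u$ is well-defined in this regime (which it is, since $\gsls{u}\neq\emptyset$ forces $\gs_u\neq\emptyset$ by Proposition~\ref{argmaxsimple}, hence $\bigks_u\in\gs_u\subseteq\nn$). One could optionally remark that the construction shows more than claimed, namely that the witnessing pair can always be chosen with $h$ affine, but that is already the content of Theorem~\ref{funatt} and need not be repeated.
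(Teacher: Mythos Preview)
Your proposal is correct and follows exactly the derivation the paper intends: the corollary is stated immediately after Theorem~\ref{funatt} (both recalled from~\cite{adje2025maximizationrealsequences} without proof here), and the passage from the triple $(a,b,c)$ to the pair $(h,\beta)=(t\mapsto at+c,\,b)\in\funset(u)$ is precisely the repackaging that makes it a corollary. There is nothing to add.
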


\begin{prop}
\label{emptybound}
Let $u\in\Lambda$. Then, $\gsls{u}\neq \emptyset$ if and only if there exists $h\in\secfunh(u)$ useful for $u$.
\end{prop}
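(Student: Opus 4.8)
The plan is to establish the two implications separately; both reduce directly to results already proved in the excerpt, so the argument is short.

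\emph{Forward direction.} Assume $\gsls{u}\neq\emptyset$. I would simply invoke Corollary~\ref{maincoro}: it produces a pair $(h,\beta)\in\funset(u)$ that is useful for $u$, and in particular $h\in\secfunh(u)$ is useful for $u$, which is exactly the desired conclusion. If one prefers an explicit witness instead of citing the corollary, apply Theorem~\ref{funatt} to get $a>0$, $b\in(0,1)$ and $c\in\rr$ with $u_k\leq ab^k+c$ for all $k\in\nn$ and $\{k\in\nn:u_k>c\}\neq\emptyset$; then set $h(t):=at+c$ and $\beta:=b$. Since $a>0$, the map $h$ is strictly increasing and continuous on $[0,1]$, so $h\in\fset$; the inequality $u_k\leq ab^k+c=h(\beta^k)$ gives $(h,\beta)\in\funset(u)$, hence $h\in\secfunh(u)$; and $\res(u,h)=\{k\in\nn:u_k>h(0)\}=\{k\in\nn:u_k>c\}\neq\emptyset$ shows $h$ is useful for $u$.

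\emph{Backward direction.} Assume there exists $h\in\secfunh(u)$ useful for $u$. By Definition~\ref{useful} this means $\res(u,h)\neq\emptyset$, so there is some $k\in\nn$ with $u_k>h(0)$. By the first statement of Proposition~\ref{hzero}, $\limsup_{n\to+\infty}u_n\leq h(0)$. Chaining the two, $u_k>h(0)\geq\limsup_{n\to+\infty}u_n$, so $k\in\gsls{u}$ and therefore $\gsls{u}\neq\emptyset$.

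\emph{Main difficulty.} There is essentially no obstacle at this level of the argument: the substantive work was already carried out in Theorem~\ref{funatt} and Corollary~\ref{maincoro} for the forward direction and in Proposition~\ref{hzero}(1) for the converse. The only point that deserves a moment's care is checking that the function exhibited in the forward direction genuinely lies in $\fset$, i.e.\ is strictly increasing and continuous on $[0,1]$ — which is immediate for an affine map with positive slope.
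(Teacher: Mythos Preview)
Your proposal is correct and follows exactly the approach implicit in the paper's presentation: the backward implication is precisely the remark the paper makes immediately after Definition~\ref{useful}, and the forward implication is an immediate consequence of Corollary~\ref{maincoro} (equivalently Theorem~\ref{funatt}), just as you argue.
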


Theorem~\ref{funatt} provides a theoretical result and to have in hand an optimal function $h\in \secfunh(u)$ useful for $u$ is very unlikely to happen. However, if we have any function $h\in\secfunh(u)$, we can produce a stopping integer for $u$ which is, in fact, an upper bound of $\bigks_u$. 
To compute it, for $u\in\rr^\nn$, we introduce the function $\Fu:\nn\times \fset\times (0,1)\mapsto \rr\cup\{+\infty\}$ defined for all $k\in\nn$, $h\in\fset$ and $\beta\in (0,1)$ by:
\begin{equation}
\label{mainformula}
\Fu(k,h,\beta):=\left\{
\begin{array}{lr}
\dfrac{\ln(h^{-1}(u_k))}{\ln(\beta)} & \text{ if } (h,\beta)\in\funset(u)\text{ and }k\in\res(u,h)\\
+\infty & \text{ otherwise}
\end{array}
\right.
\end{equation}

\subsubsection{Properties of $\Fu$ and the computation of the supremum of a sequence}
The function defined in~\eqref{mainformula} has useful properties for computing a stopping integer for $u$ as shown in Theorem~\ref{summary3}. To obtain the results of Theorem~\ref{summary3}, we need auxiliary results.
\begin{prop}
\label{mainprop}
Let us take a pair $(h,\beta)\in \funset(u)$ useful for $u$ and let $k\in\res(u,h)$. The following statements are true:
\begin{enumerate}
\item $\Fu(k,h,\beta)$ is well-defined and strictly positive if $u_0<h(1)$ and null if $k=0$ and $u_0=h(1)$;
\item $\lfloor \Fu(k,h,\beta)\rfloor+1=\min\{j\in\nn : h(\beta^j)< u_k\}=\min\{j\in\nn : \beta^j< h^{-1}(u_k)\}$;
\item $\Fu(k,h,\beta)\geq k$;
\item For all $j\in\nn$, $j>\Fu(k,h,\beta)$, $u_j<u_k$;
\item Let $j\in\res(u,h)$. If $u_j\leq u_k$ then $\Fu(k,h,\beta)\leq \Fu(j,h,\beta)$.
\end{enumerate}
\end{prop}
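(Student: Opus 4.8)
\textbf{Proof plan for Proposition~\ref{mainprop}.}

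The plan is to work through the five statements in order, since each essentially unpacks the definition~\eqref{mainformula} of $\Fu$ together with the monotonicity and continuity of $h$ on $[0,1]$ and the hypothesis $k\in\res(u,h)$, i.e.\ $u_k>h(0)$. First I would establish well-definedness: since $(h,\beta)\in\funset(u)$ and $k\in\res(u,h)$, we have $h(0)<u_k\leq h(\beta^k)\leq h(1)$ (the last inequality only in the case $k=0$; for $k\geq 1$ we get the strict version $u_k<h(1)$ from Proposition~\ref{hzero}(2)). Because $h$ is a strictly increasing continuous bijection from $[0,1]$ onto $[h(0),h(1)]$, the value $h^{-1}(u_k)$ lies in $(0,1]$, so $\ln(h^{-1}(u_k))\leq 0$ and $\ln\beta<0$, whence $\Fu(k,h,\beta)=\ln(h^{-1}(u_k))/\ln(\beta)\geq 0$; it is $=0$ exactly when $h^{-1}(u_k)=1$, i.e.\ $u_k=h(1)$, which by Proposition~\ref{hzero}(2) forces $k=0$, and it is $>0$ otherwise (in particular whenever $u_0<h(1)$). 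That settles item~1.

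For item~2, I would use the fact that, for a positive quantity $\beta^j$, the inequality $\beta^j<h^{-1}(u_k)$ is equivalent (apply the strictly increasing $h$) to $h(\beta^j)<u_k$ as long as $\beta^j\in[0,1]$, which always holds; the two displayed sets are therefore literally equal. Then, taking logarithms and dividing by $\ln\beta<0$ (which reverses the inequality), $\beta^j<h^{-1}(u_k)$ is equivalent to $j\ln\beta<\ln(h^{-1}(u_k))$, i.e.\ to $j>\ln(h^{-1}(u_k))/\ln\beta=\Fu(k,h,\beta)$; the smallest integer $j$ with $j>\Fu(k,h,\beta)$ is exactly $\lfloor\Fu(k,h,\beta)\rfloor+1$ (using that $\Fu(k,h,\beta)\geq 0$, and noting that if $\Fu(k,h,\beta)$ happens to be an integer the strict inequality still gives $\lfloor\Fu\rfloor+1$). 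Item~3 is then immediate from item~2: since $\beta^k\geq h^{-1}(u_k)$ is \emph{false}? No --- rather, $u_k\leq h(\beta^k)$ gives $h^{-1}(u_k)\leq\beta^k$, so $k$ does \emph{not} satisfy $\beta^k<h^{-1}(u_k)$; hence $k$ is not $>\Fu(k,h,\beta)$, i.e.\ $k\leq\Fu(k,h,\beta)$. For item~4, any $j>\Fu(k,h,\beta)$ satisfies $\beta^j<h^{-1}(u_k)$ by item~2, hence $u_j\leq h(\beta^j)<h(h^{-1}(u_k))=u_k$, using $(h,\beta)\in\funset(u)$ and strict monotonicity of $h$.

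Finally, for item~5, suppose $j\in\res(u,h)$ with $u_j\leq u_k$. Both $h^{-1}(u_j)$ and $h^{-1}(u_k)$ are defined and lie in $(0,1]$, and $h^{-1}$ is increasing, so $h^{-1}(u_j)\leq h^{-1}(u_k)$, hence $\ln(h^{-1}(u_j))\leq\ln(h^{-1}(u_k))\leq 0$; dividing by $\ln\beta<0$ reverses the order and yields $\Fu(k,h,\beta)=\ln(h^{-1}(u_k))/\ln\beta\leq\ln(h^{-1}(u_j))/\ln\beta=\Fu(j,h,\beta)$. I do not expect any genuine obstacle here; the only points requiring a little care are the boundary/equality cases in item~1 (distinguishing $k=0$, $u_0=h(1)$) and the handling of the floor function in item~2 when $\Fu$ is an integer, together with consistently tracking sign reversals when dividing by $\ln\beta<0$. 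All the structural inputs --- $h$ a strictly increasing continuous bijection of $[0,1]$ onto $[h(0),h(1)]$, the bound $u_j\leq h(\beta^j)$, and Proposition~\ref{hzero} --- are already available.
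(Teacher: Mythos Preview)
Your argument is correct in all five parts. Note, however, that the present paper does not actually supply a proof of this proposition: it is recalled from~\cite{adje2025maximizationrealsequences} (see the opening of Subsection~\ref{subsec:realseq}, ``All details can be found in~\cite{adje2025maximizationrealsequences}''), so there is no in-paper proof to compare against. Your approach---unfolding the definition of $\Fu$ and repeatedly exploiting that $h$ is a strictly increasing continuous bijection from $[0,1]$ onto $[h(0),h(1)]$ together with the sign reversal when dividing by $\ln\beta<0$---is the natural one and matches how such statements are typically established; the only subtle points (the boundary case $k=0$, $u_0=h(1)$ in item~1, and the floor when $\Fu$ is an integer in item~2) you handle correctly. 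One cosmetic remark: in the derivation of item~3 you momentarily write ``is \emph{false}? No --- rather,'' which reads as a scratch-work correction; in a final write-up simply state directly that $u_k\leq h(\beta^k)$ gives $h^{-1}(u_k)\leq\beta^k$, so $k\not>\Fu(k,h,\beta)$.
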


\begin{prop}
\label{simplefactFu}
Let $u\in\Lambda$ such that $\gsls{u}\neq \emptyset$. The following properties hold:
\begin{enumerate}
\item Let $h\in\secfunh(u)$ and $k\in\res(u,h)$. Let $\beta,\beta'\in\secfunb(u,h)$ such that $\beta\leq \beta'$. Then $\Fu(k,h,\beta)\leq \Fu(k,h,\beta')$.
\item Let $g,h\in\secfunh(u)$. Let $k\in\res(u,\min\{g,h\})$. Let $\beta\in\secfunb(u,\min\{g,h\})$. Then $\Fu(k,\min\{g,h\},\beta)\leq \min\{\Fu(k,g,\beta),\Fu(k,h,\beta)\}$.
\end{enumerate}
\end{prop}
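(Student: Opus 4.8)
The plan is to prove the two items separately, each reducing to elementary monotonicity once the sign conventions are fixed. Throughout I would use the standing facts: for $h\in\secfunh(u)$ and $k\in\res(u,h)$ one has $u_k>h(0)$ by definition of $\res(u,h)$ and $u_k\leq h(1)$ by Proposition~\ref{hzero}(2), so $h^{-1}(u_k)$ is well defined and lies in $(0,1]$, whence $\ln(h^{-1}(u_k))\leq 0$; and $\ln(\beta)<0$ for every $\beta\in(0,1)$. In particular, whenever $\Fu(k,h,\beta)$ is given by the first branch of~\eqref{mainformula} it equals $\ln(h^{-1}(u_k))/\ln(\beta)\geq 0$.

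For the first item, fix $h$, $k\in\res(u,h)$ and $\beta\leq\beta'$ in $\secfunb(u,h)$; then both $\Fu(k,h,\beta)$ and $\Fu(k,h,\beta')$ are given by the first branch of~\eqref{mainformula} and share the numerator $N:=\ln(h^{-1}(u_k))\leq 0$. From $\beta\leq\beta'<1$ we get $\ln(\beta)\leq\ln(\beta')<0$, hence $1/\ln(\beta)\geq 1/\ln(\beta')$; multiplying this inequality by the nonpositive number $N$ reverses it, which is exactly $\Fu(k,h,\beta)\leq\Fu(k,h,\beta')$. In the boundary case $N=0$, i.e.\ $u_k=h(1)$ — which by Proposition~\ref{hzero}(2) can only happen for $k=0$ — both sides are $0$.

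For the second item, set $m:=\min\{g,h\}$. I would first record that $m\in\fset$: a pointwise minimum of continuous functions is continuous, and if $x<y$, choosing the index that attains $m(x)$, say $m(x)=g(x)$, one has $m(x)=g(x)<g(y)$ and $m(x)=g(x)\leq h(x)<h(y)$, so $m(x)<m(y)$; thus $m$ is strictly increasing on $[0,1]$. Since $m\leq g$ and $m\leq h$ pointwise and $\beta\in\secfunb(u,m)$, we get $u_k\leq m(\beta^k)\leq g(\beta^k)$ for all $k$ and likewise for $h$, so $(g,\beta),(h,\beta)\in\funset(u)$. It then suffices to show $\Fu(k,m,\beta)\leq\Fu(k,g,\beta)$ (the inequality for $h$ being identical) and take the minimum on the right. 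If $k\notin\res(u,g)$ then $\Fu(k,g,\beta)=+\infty$ and there is nothing to prove. Otherwise $u_k>g(0)$ and $u_k\leq g(1)$ by Proposition~\ref{hzero}(2), while $k\in\res(u,m)$ gives $u_k>m(0)$ and $u_k\leq m(1)$, so $m^{-1}(u_k)$ and $g^{-1}(u_k)$ are both defined in $(0,1]$. Putting $s:=g^{-1}(u_k)\in[0,1]$, from $m(s)\leq g(s)=u_k=m\bigl(m^{-1}(u_k)\bigr)$ and the strict monotonicity of $m$ we obtain $s\leq m^{-1}(u_k)$, i.e.\ $g^{-1}(u_k)\leq m^{-1}(u_k)$. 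Applying $\ln$ preserves this inequality and dividing by $\ln(\beta)<0$ reverses it, giving $\Fu(k,m,\beta)\leq\Fu(k,g,\beta)$, as wanted.

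The arguments are routine; the points needing care are the repeated sign reversals — one for $\ln(\beta)<0$, one for the nonpositive numerator, one for the order reversal under taking inverses — and the domain bookkeeping, namely using Proposition~\ref{hzero} to certify that $u_k$ falls in the interval on which each inverse is defined, so that formula~\eqref{mainformula} really applies rather than defaulting to $+\infty$. The only genuinely new (and minor) ingredient is checking that $\min\{g,h\}$ remains in $\fset$.
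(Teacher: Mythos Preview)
Your proof is correct. The paper itself does not prove this proposition---it is recalled from the companion reference~\cite{adje2025maximizationrealsequences} in the preliminaries---so there is no in-paper argument to compare against; your elementary monotonicity and sign-tracking argument is exactly the natural route, and your verification that $\min\{g,h\}\in\fset$ fills in a point the statement takes for granted.
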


The first statement of Proposition~\ref{simplefactFu} can be read as follows: we must choose the smallest possible $\beta\in\secfunb(u,h)$ to reduce the approximation gap. The second statement has a similar meaning in a functional sense: if we have several functions in $\secfunh(u)$, we must consider the minimum functions to reduce the overapproximation gap.

\begin{prop}
\label{inter}
Let $u\in\Lambda$ and $(h,\beta)\in\funset(u)$. Then for all $j\in\nn$, $\bigks_u\leq \lfloor \Fu(j,h,\beta)\rfloor$. Moreover, if $h$ is useful for $u$, we have: \[\min_{k\in\res(u,h)} \Fu(k,h,\beta)=\Fu(\bigks_\nu,h,\beta)\enspace.\]
\end{prop}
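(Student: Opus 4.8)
The plan is to establish the two assertions separately, both leaning on Proposition~\ref{mainprop} and on the characterization of $\bigks_u$ as $\max\agmx(u)$ from Proposition~\ref{argmaxsimple}. For the first claim, I would fix $(h,\beta)\in\funset(u)$ and an arbitrary $j\in\nn$, and argue by contradiction: suppose $\lfloor\Fu(j,h,\beta)\rfloor<\bigks_u$. Note that $\Fu(j,h,\beta)=+\infty$ is only possible when $j\notin\res(u,h)$ (or the pair is not in $\funset(u)$, which is excluded), in which case the inequality $\bigks_u\le\lfloor\Fu(j,h,\beta)\rfloor=+\infty$ holds trivially; so I may assume $j\in\res(u,h)$ and $\Fu(j,h,\beta)$ finite. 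If $\bigks_u=+\infty$ there is nothing subtle left to handle once we are in the finite branch, so assume $\gsls{u}\neq\emptyset$, hence by Proposition~\ref{argmaxsimple} $\bigks_u=\max\agmx(u)<+\infty$. Then $u_{\bigks_u}=\sup_{n}u_n\ge u_j$, and since $\bigks_u>\lfloor\Fu(j,h,\beta)\rfloor\ge\Fu(j,h,\beta)$ is false in general — I should instead invoke statement~(4) of Proposition~\ref{mainprop}: every $i\in\nn$ with $i>\Fu(j,h,\beta)$ satisfies $u_i<u_j$. If $\lfloor\Fu(j,h,\beta)\rfloor<\bigks_u$ then $\bigks_u>\Fu(j,h,\beta)$, so $u_{\bigks_u}<u_j\le\sup_n u_n=u_{\bigks_u}$, a contradiction. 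Hence $\bigks_u\le\lfloor\Fu(j,h,\beta)\rfloor$.

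For the second claim, assume $h$ is useful for $u$, so $\res(u,h)\neq\emptyset$ and, by the remark following Definition~\ref{useful}, $\gsls{u}\neq\emptyset$; thus $\bigks_u=\max\agmx(u)$ is finite. The first step is to check $\bigks_u\in\res(u,h)$: since $u_{\bigks_u}=\sup_n u_n\ge\limsup_n u_n$ and, by statement~(1) of Proposition~\ref{hzero}, $\limsup_n u_n\le h(0)$, equality $u_{\bigks_u}=h(0)$ would force $\sup_n u_n=\limsup_n u_n$, i.e. $\gs_u=\emptyset$ by statement~(2) of Proposition~\ref{argmaxsimple}, contradicting $\gsls{u}\neq\emptyset$ via statement~(1) of the same proposition; hence $u_{\bigks_u}>h(0)$ and $\bigks_u\in\res(u,h)$. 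So $\Fu(\bigks_u,h,\beta)$ is one of the values over which the minimum is taken. For the reverse inequality, take any $k\in\res(u,h)$; since $u_k\le\sup_n u_n=u_{\bigks_u}$ and both $k,\bigks_u\in\res(u,h)$, statement~(5) of Proposition~\ref{mainprop} gives $\Fu(\bigks_u,h,\beta)\le\Fu(k,h,\beta)$. Taking the infimum over $k\in\res(u,h)$ yields $\Fu(\bigks_u,h,\beta)\le\min_{k\in\res(u,h)}\Fu(k,h,\beta)$, and combined with membership this is an equality.

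The main obstacle is bookkeeping the degenerate cases cleanly — in particular separating the branch where $\Fu$ takes the value $+\infty$ (where the first inequality is vacuous), the case $\gsls{u}=\emptyset$ (where $\bigks_u=+\infty$ and again the first inequality is vacuous while usefulness cannot occur by Proposition~\ref{emptybound}), and making sure the step $\bigks_u\in\res(u,h)$ is justified before applying statement~(5) of Proposition~\ref{mainprop}, which requires both indices to lie in the residual set. Once those cases are isolated, each remaining implication is a one-line application of the already-established properties of $\Fu$.
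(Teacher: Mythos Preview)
The paper does not include its own proof of Proposition~\ref{inter} (it is recalled from~\cite{adje2025maximizationrealsequences}), so there is no reference argument to compare against; your overall architecture --- statement~(4) of Proposition~\ref{mainprop} for the first claim, statement~(5) for the second --- is the right one. There is, however, a genuine gap in the second part. To place $\bigks_u$ in $\res(u,h)$ you assert that ``equality $u_{\bigks_u}=h(0)$ would force $\sup_n u_n=\limsup_n u_n$'', but this implication does not follow: from $\sup_n u_n = h(0)$ and $\limsup_n u_n\le h(0)$ you only recover the tautology $\limsup_n u_n\le\sup_n u_n$, and you never address the possibility $u_{\bigks_u}<h(0)$. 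The fix is a one-liner that bypasses the whole detour through Proposition~\ref{argmaxsimple}: usefulness gives some $k$ with $u_k>h(0)$, and then $u_{\bigks_u}=\sup_n u_n\ge u_k>h(0)$, so $\bigks_u\in\res(u,h)$ directly.

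A minor expository point on the first claim: the sentence ``If $\bigks_u=+\infty$ there is nothing subtle left to handle once we are in the finite branch'' reads as though the inequality $\bigks_u\le\lfloor\Fu(j,h,\beta)\rfloor$ is then vacuous, but that is not the mechanism --- if $\bigks_u=+\infty$ and $\Fu(j,h,\beta)$ is finite, the inequality would be false. The correct observation is that once $j\in\res(u,h)$ the function $h$ is automatically useful, hence $\gsls{u}\ne\emptyset$ (remark after Definition~\ref{useful}), hence $\bigks_u<+\infty$; the case $\bigks_u=+\infty$ is \emph{impossible} in the finite branch, not vacuous. With these two clarifications your argument is complete.
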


We can go further than the results of Proposition~\ref{inter} to obtain a solution to problem~\eqref{realseqpb}.
\begin{theorem}[Solution for~\eqref{realseqpb}]
\label{summary3}
Let $u\in\Lambda$ with $\gsls{u}\neq \emptyset$. Let $(h,\beta)\in\funset(u)$ be useful for $u$ and let $k\in\res(u,h)$. Then :
\[
\max_{n\in\nn} u_n=\max\{u_k : k= 0,\ldots,\lfloor\Fu(\bigks_u,h,\beta)\rfloor\}.
\]   
\end{theorem}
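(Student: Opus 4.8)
The plan is to assemble the claimed identity from three facts: (i) $\bigks_u$ is the largest maximizer of $u$, so that $\max_{n\in\nn}u_n$ is attained and equals $u_{\bigks_u}$; (ii) $\bigks_u$ belongs to $\res(u,h)$; and (iii) $\bigks_u\le\lfloor\Fu(\bigks_u,h,\beta)\rfloor$. Throughout I abbreviate $N:=\lfloor\Fu(\bigks_u,h,\beta)\rfloor$ and use $j$ as the running index of the finite window $\{0,\dots,N\}$, to avoid clashing with the index $k\in\res(u,h)$ from the hypothesis.

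First I would settle (i). Since $\gsls{u}\neq\emptyset$, Proposition~\ref{argmaxsimple}(1) gives $\gs_u\neq\emptyset$, hence $\bigks_u=\inf\gs_u\in\gs_u$, and Proposition~\ref{argmaxsimple}(3) identifies $\bigks_u=\max\agmx(u)$. In particular $\agmx(u)\neq\emptyset$, so $\sup_{n\in\nn}u_n$ is attained and $\max_{n\in\nn}u_n=u_{\bigks_u}$; this also makes the left-hand side of the asserted equality a bona fide maximum.

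Next, for (ii), I would exploit the usefulness of $(h,\beta)$: $\res(u,h)\neq\emptyset$, so fix any $k_0\in\res(u,h)$, i.e. $u_{k_0}>h(0)$. Then $u_{\bigks_u}=\max_{n\in\nn}u_n\ge u_{k_0}>h(0)$, which is exactly $\bigks_u\in\res(u,h)$. This is the step I expect to be the only genuine point of the argument — noticing that a global maximizer automatically lands in the residual set as soon as that set is nonempty — because it is what licenses applying Proposition~\ref{mainprop} and Proposition~\ref{inter} at the index $\bigks_u$; as a by-product, $\Fu(\bigks_u,h,\beta)$ is then a finite real number (Proposition~\ref{mainprop}(1), or directly from~\eqref{mainformula}), so $N$ is a genuine nonnegative integer and the statement is non-vacuous. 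Fact (iii) then follows at once: Proposition~\ref{inter} applied with $j=\bigks_u$ gives $\bigks_u\le\lfloor\Fu(\bigks_u,h,\beta)\rfloor=N$ (equivalently, Proposition~\ref{mainprop}(3) at $k=\bigks_u$ gives $\Fu(\bigks_u,h,\beta)\ge\bigks_u$, whence $N\ge\bigks_u$ since $\bigks_u\in\nn$).

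Finally I would conclude. By (iii), $\bigks_u\in\{0,\dots,N\}$, so using (i),
\[
\max_{n\in\nn}u_n=u_{\bigks_u}\le\max\{u_j:j=0,\dots,N\}\le\sup_{n\in\nn}u_n=\max_{n\in\nn}u_n,
\]
where the penultimate inequality simply records that each windowed term is dominated by the supremum; the chain therefore collapses to equalities, which is the desired identity. As a qualitative complement one may invoke Proposition~\ref{mainprop}(4) at $k=\bigks_u$: any $j>N$ satisfies $j\ge N+1>\Fu(\bigks_u,h,\beta)$, hence $u_j<u_{\bigks_u}$, so no maximizer of $u$ lies strictly beyond $N$ — i.e. $N$ is a stopping integer for $u$ — which is the operational content behind the displayed formula. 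Apart from step (ii), the argument is pure bookkeeping with the cited propositions, together with the elementary observation that $\lfloor x\rfloor+1>x$.
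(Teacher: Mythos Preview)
Your proof is correct. The paper itself does not supply a proof of Theorem~\ref{summary3}: this result is recalled from~\cite{adje2025maximizationrealsequences} as part of the preliminaries in Section~\ref{sec:preliminaries}, so there is no paper-internal argument to compare against. Your assembly --- identifying $\bigks_u$ as the last maximizer via Proposition~\ref{argmaxsimple}, then noting $\bigks_u\in\res(u,h)$ from usefulness, then bounding $\bigks_u\le N$ via Proposition~\ref{mainprop}(3) (or Proposition~\ref{inter}), and closing with the sandwich $u_{\bigks_u}\le\max_{0\le j\le N}u_j\le\max_{n}u_n=u_{\bigks_u}$ --- is the natural route given the stated ingredients, and each step is justified.
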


\begin{theorem}[Formula optimality]
\label{formulaopt}
For all $u\in\rr^\nn$, we have the following formula:
\[
\bigks_u = \inf_{(h,\beta)\in\Gamma(u)} \inf_{k\in\res(u,h)} \Fu(k,h,\beta)=  \inf_{(h,\beta)\in\Gamma(u)} \inf_{k\in\res(u,h)}\dfrac{\ln(h^{-1}(u_k))}{\ln(\beta)}
\]
and the $\inf$ can be replaced by $\min$ when $\gsls{u}\neq \emptyset$.
\end{theorem}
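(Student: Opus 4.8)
The plan is to argue by cases according to whether $u$ is bounded above, and, when it is, whether $\gsls{u}$ is empty, using throughout the conventions $\inf\emptyset=+\infty$ and $\lfloor+\infty\rfloor=+\infty$. I would first observe that the second equality in the statement is merely the definition~\eqref{mainformula} of $\Fu$ on the set of triples $(k,h,\beta)$ with $(h,\beta)\in\funset(u)$ and $k\in\res(u,h)$ — outside this set $\Fu\equiv+\infty$ and never contributes to an infimum — so the real content is the first equality.

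Next I would dispose of the two degenerate cases in which $\bigks_u=+\infty$. If $u\notin\Lambda$, then $\funset(u)=\emptyset$ by Proposition~\ref{nonemptyfun}, so the right-hand side is an infimum over an empty index set, hence $+\infty$; and since $\max_{0\leq j\leq k}u_j<+\infty=\sup_{n\in\nn}u_n$ for every $k$, one gets $\sup_{j>k}u_j=+\infty$, the defining inequality of $\gs_u$ fails for all $k$, and $\bigks_u=\inf\gs_u=+\infty$. If $u\in\Lambda$ but $\gsls{u}=\emptyset$, then $\gs_u=\emptyset$ by Proposition~\ref{argmaxsimple}, so $\bigks_u=+\infty$, while Proposition~\ref{emptybound} yields $\res(u,h)=\emptyset$ for every $h\in\secfunh(u)$, so every inner infimum on the right-hand side equals $+\infty$ and so does the right-hand side. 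I would also record that $\gsls{u}\neq\emptyset$ forces $u\in\Lambda$ (if $u\notin\Lambda$ then $\limsup_n u_n=+\infty$ by Proposition~\ref{suplimsup}, hence $\gsls{u}=\emptyset$), so these two cases are exactly the complement of the one below.

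The substantive case is $u\in\Lambda$ with $\gsls{u}\neq\emptyset$. For the lower bound I would apply Proposition~\ref{inter}: for every $(h,\beta)\in\funset(u)$ and every $k\in\res(u,h)$ it gives $\bigks_u\leq\lfloor\Fu(k,h,\beta)\rfloor\leq\Fu(k,h,\beta)$, so $\bigks_u$ bounds the double infimum from below. For attainment I would feed into $\Fu$ the pair furnished by Corollary~\ref{maincoro}: a pair $(h,\beta)\in\funset(u)$, useful for $u$, with $u_{\bigks_u}=h(\beta^{\bigks_u})$. Since $\beta^{\bigks_u}\in(0,1]$ and $h$ is strictly increasing on $[0,1]$, this gives $u_{\bigks_u}=h(\beta^{\bigks_u})>h(0)$, so $\bigks_u\in\res(u,h)$, and $h^{-1}(u_{\bigks_u})=\beta^{\bigks_u}$, whence
\[
\Fu(\bigks_u,h,\beta)=\dfrac{\ln(h^{-1}(u_{\bigks_u}))}{\ln(\beta)}=\dfrac{\ln(\beta^{\bigks_u})}{\ln(\beta)}=\bigks_u.
\]
Together with the lower bound this proves the double infimum equals $\bigks_u$ and is attained at this triple; since moreover $\bigks_u<+\infty$ and $\bigks_u\in\gs_u$ here (the remark after~\eqref{eq:bigksu}), and since Proposition~\ref{inter} also gives that the inner infimum attached to any useful $h$ is attained, both ``$\inf$'' may be read as ``$\min$''.

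The hard part, such as it is, is really only bookkeeping: making sure that in the degenerate cases the two sides of the equality genuinely coincide at $+\infty$ and that the empty-infimum convention is invoked consistently on both sides. The conceptual core is the displayed line — evaluating $\Fu$ at rank $\bigks_u$ for the pair of Corollary~\ref{maincoro}, where the geometric majorant $h(\beta^{k})$ touches $u_{k}$, collapses the logarithmic ratio to exactly $\bigks_u$.
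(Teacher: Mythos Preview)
Your proof is correct. The paper itself does not supply a proof of Theorem~\ref{formulaopt}; the result is merely recalled from~\cite{adje2025maximizationrealsequences} along with the other statements in Section~\ref{sec:preliminaries}, so there is no ``paper's own proof'' to compare against. Your argument is the natural one given the surrounding propositions: the degenerate cases are handled via Propositions~\ref{nonemptyfun}, \ref{argmaxsimple} and~\ref{emptybound}, the lower bound in the main case comes from Proposition~\ref{inter}, and attainment is obtained by evaluating $\Fu$ at the pair furnished by Corollary~\ref{maincoro}. All steps are sound and the case split is exhaustive.
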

We conclude this subsection with a summary in the form of an algorithm (Algorithm~\ref{algomaincst}) to solve the problem~\eqref{realseqpb}.

\begin{algorithm}[h!]
\DontPrintSemicolon

\Input{$u\in\Lambda$ with $\gsls{u}\neq \emptyset$ and $(h,\beta)\in\funset(u)$ useful} 
\Output{$u_{\rm opt}$ and $\bigkse_u=\min\agmx(u)$}
\Begin{
$k=0$; $K=+\infty$, $u_{\rm max}=-\infty$, $k_{\rm max}=0$\;
\While{$k\leq K$}
{
	\If{$k\in\res(u,h)$}{
		\If{$u_{\rm max}< u_k$}{
			$K=\lfloor\Fu(k,h,\beta)\rfloor$\;
			$u_{\rm max}=u_k$\;
			$k_{\rm max}=k$\;
		}
    }
    $k=k+1$\;
}
$u_{\rm opt}=u_{\rm max}$\;
$\bigkse_u=k_{\rm max}$\;
}
\caption{Resolution of the problem described in~\eqref{realseqpb} with a useful $(h,\beta)\in\funset(u)$}
\label{algomaincst}
\end{algorithm}

Algorithm~\ref{algomaincst} terminates as $h$ is supposed to be useful i.e., $\res(u,h)\neq\emptyset$ making $\Fu(k,h,\beta)$ finite for all $k\in\res(u,h)$. We can also improve $\Fu(k,h,\beta)$ when the variable $u_{\rm max}$ is updated. Corollary~\ref{maincoro} ensures that a useful function $h$ exists. 

Returning to the discrete-time peak computation problem~\eqref{pcpprob}, to compute a stopping integer for $\nu$ (defined in~\eqref{nudef}), we need to construct a couple $(h,\beta)$ useful for $\nu$. 

\begin{problem}
\label{computehpb}
Construct a couple $(h,\beta)\in \funset(\nu)$ useful for 
$\nu$ when $\gsls{\nu}\neq \emptyset$.
\end{problem}
The next sections of the paper focus on classical tools from dynamical systems stability theory, such as $\klcls$ certificate and Lyapunov functions. To properly define these tools, we need to introduce the concept of comparison functions.


\subsection{Elements of comparison functions}
\label{subsec:compare}
The global asymptotic stability of discrete-time systems can be studied by means of \textit{comparison functions} (see e.g.,~\cite{GEISELHART201449}). For a comprehensive literature review on comparison functions, interested readers can refer to ~\cite{DBLP:journals/mcss/Kellett14}. In our context, as suggested in Subsection~\ref{subsec:realseq}, the problem~\eqref{pcpprob} can be studied by analyzing the limit superior of the sequence $\nu$ defined in~\eqref{nudef}. Comparison functions will be used to study its limit superior.

  
\begin{defi}[$\mathcal{K}$/$\mathcal{K}_\infty$ functions classes]
A function $\alpha:[0,a)\mapsto\rr_+$ for some $a>0$ belongs to $\kcls$ if $\alpha$ is strictly increasing and continuous on $[0,a)$, and $\alpha(0)=0$. If moreover, $a=+\infty$ and $\lim_{x\to +\infty} \alpha(x)=+\infty$, $\alpha$ is said to belong to $\kclsi$.
\end{defi}

\begin{defi}[$\lcls$ functions class]
A function $\sigma:\rr_+\to \rr_+$ belongs to $\lcls$ if it is continuous, decreasing, and $\lim_{x\to +\infty} \sigma(x)=0$. 
\end{defi}

\begin{defi}[$\klcls$ functions class]
A function $\gamma:\rr_+\times \rr_+\mapsto \rr_+$ belongs to $\klcls$ if 
for all $t\in\rr_+$, $s\mapsto \gamma(s,t)$ belongs to $\kcls$ and 
if for all $s\in\rr_+$, $t\mapsto \gamma(s,t)$ belongs to $\lcls$.
\end{defi}

The class $\kcls$ is included in $\fset$ but $\kcls$ functions are strictly positive except at 0, where they are null. Hence, $h\in\kcls\cap \secfunh(\nu)$ is useful for $\nu$ if at least one term $\nu_k$ is strictly positive. Thus, in this subsection, Assumption~\ref{positivity} must be satisfied 

\begin{assumption}
\label{positivity}
There exist $k\in\nn$ and $x\in\xin$ satisfying $\varphi(T^k(x))>0$.
\end{assumption}
Let us consider the set of positive definite functions on a subset $X\subseteq \rd$ containing 0:
\[ 
\pd(X):=\{f: X \mapsto \rr_+ : f(x)=0\iff x=0\}.
\]
that is the set of nonnegative functions on $X$ null at 0 and nowhere else.

We also need useful results linking positive definite functions and comparison functions. A proof can be found in~\cite{khalil2002nonlinear} (extensions can be found in~\cite{adje2024parametric}). 

\begin{prop}[Lemma 4.3/Appendix C.4~\cite{khalil2002nonlinear}]
\label{khalil}
Let $X\subseteq \rd$ containing 0 in its interior. Let $f$ be a continuous function in $\pd(X)$. Let $r>0$ such that $\cball{r}\subseteq X$. Then, for all $x\in \cball{r}$, we have
\begin{equation}
\label{inegkhalil}
\underline{\alpha}_f(\norm{x})\leq f(x)\leq \overline{\alpha}^f(\norm{x})
\end{equation}
where:
\begin{equation}
\label{defkhalil}
\underline{\alpha}_f:[0,r)\ni s\mapsto \inf_{s\leq \norm{x}\leq r} f(x)\quad \text{ and }\quad \overline{\alpha}^f:[0,r)\ni s\mapsto \sup_{\norm{x}\leq s} f(x)
\end{equation}
and $\underline{\alpha}_f,\overline{\alpha}^f\in\kcls$.

If, moreover, $X=\rd$ and $f(x)$ tends to $+\infty$ as $\norm{x}$ tends to $+\infty$, then $\underline{\alpha}_f,\overline{\alpha}^f\in\kclsi$ and the inequality~\eqref{inegkhalil} holds for all $x\in \rd$.
\end{prop}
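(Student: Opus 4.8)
The statement is a classical lemma (Khalil, Lemma 4.3 / Appendix C.4) and the proof is essentially a verification that the explicitly-defined functions $\underline{\alpha}_f$ and $\overline{\alpha}^f$ do the job and belong to the right comparison class. I would organize it as follows.

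First I would establish the sandwich inequality~\eqref{inegkhalil}. For any $x$ with $\norm{x}\le r$, the upper bound $f(x)\le \overline{\alpha}^f(\norm{x})$ is immediate from the definition $\overline{\alpha}^f(s)=\sup_{\norm{y}\le s} f(y)$, since $x$ itself is admissible in that supremum with $s=\norm{x}$. For the lower bound, $\underline{\alpha}_f(\norm{x})=\inf_{\norm{x}\le\norm{y}\le r} f(y)\le f(x)$, again because $x$ lies in the indexing set. So this step is a one-liner once the definitions are in place; the only minor point is that one needs $\norm{x}<r$ for $\underline{\alpha}_f(\norm{x})$ to be defined on the half-open interval $[0,r)$, which is fine for the stated conclusion (and the value at $s=r$ plays no role).

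The substantive work is showing $\underline{\alpha}_f,\overline{\alpha}^f\in\kcls$, i.e. each is continuous on $[0,r)$, strictly increasing, and vanishes at $0$. For $\overline{\alpha}^f$: monotonicity (nondecreasing) is clear since the supremum is over a growing family of sets; $\overline{\alpha}^f(0)=f(0)=0$ since $f\in\pd(X)$; continuity follows from continuity of $f$ on the compact balls $\cball{s}$ together with an $\varepsilon$-argument (the sup is attained, and attained points move continuously enough in $s$), or more cleanly from the fact that $s\mapsto \overline{\alpha}^f(s)$ is nondecreasing and its range contains no jumps because $f$ is continuous and the balls are connected, so the intermediate value property holds. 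Strict monotonicity is the delicate part: if $\overline{\alpha}^f(s_1)=\overline{\alpha}^f(s_2)$ for $s_1<s_2$, then the max of $f$ over $\cball{s_2}$ is already attained on $\cball{s_1}$; this need not contradict anything for a general positive definite $f$, so in fact $\overline{\alpha}^f$ as defined is only nondecreasing, and the standard fix is to replace it by a strictly increasing $\kcls$ majorant — e.g. $s\mapsto \overline{\alpha}^f(s) + $ (a small $\kclsi$ term like $s$, rescaled) — which still dominates $f$. I would follow Khalil and either (a) note that one may always add such a correction term, or (b) define $\overline{\alpha}^f$ from the start as $\sup_{\norm{y}\le s}f(y)$ plus a strictly increasing continuous perturbation; the cleanest is to cite that this adjustment is routine. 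The symmetric analysis handles $\underline{\alpha}_f$: it is nonincreasing-free, i.e. nondecreasing in $s$ (larger $s$ shrinks the index set $\{\,\norm{x}\le\norm{y}\le r\,\}$, hence raises the inf), equals $0$ at $s=0$ because $0$ is in the closure of the index set and $f$ is continuous with $f(0)=0$ — here one uses that $0\in\mathrm{int}(X)$ so small balls are inside $X$ — and continuity/strict monotonicity are patched the same way.

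Finally, for the $\kclsi$ addendum when $X=\rd$ and $f(x)\to+\infty$ as $\norm{x}\to+\infty$: the construction is repeated with $r=+\infty$ (the index set for $\underline{\alpha}_f$ becomes $\{\,\norm{y}\ge s\,\}$), and one checks $\overline{\alpha}^f(s)\to\infty$ directly from $\overline{\alpha}^f(s)\ge f(se_1)\to\infty$, while $\underline{\alpha}_f(s)=\inf_{\norm{y}\ge s}f(y)\to\infty$ because the radial coercivity forces $f$ to be large uniformly outside any large ball. The sandwich inequality then holds for all $x\in\rd$ by the same immediate argument as before. The main obstacle, such as it is, is purely bookkeeping: getting genuine \emph{strict} monotonicity of the bounding functions rather than mere monotonicity, which is why the literature always allows the harmless additive correction; I would state this explicitly rather than grind through an $\varepsilon$--$\delta$ continuity proof. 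Since the proposition is quoted with a reference, a short self-contained sketch along these lines, deferring the standard details to~\cite{khalil2002nonlinear,adje2024parametric}, is appropriate.
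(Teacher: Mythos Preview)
The paper does not give its own proof of this proposition: it is stated with an explicit attribution to Khalil~\cite{khalil2002nonlinear} (and~\cite{adje2024parametric} for extensions), and the surrounding text says ``A proof can be found in~\cite{khalil2002nonlinear}.'' Your sketch is the standard Khalil argument and is correct, including your identification of the one genuine subtlety --- that the raw $\overline{\alpha}^f$ and $\underline{\alpha}_f$ are a priori only nondecreasing and must be adjusted by a strictly increasing continuous perturbation to land in $\kcls$ --- so your plan to give a short sketch and defer details to the cited references matches exactly what the paper does.
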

Even if $\norm{\cdot}$ represents the Euclidean norm, the result holds for any norm in $\rd$ as they are equivalent.

\section{Solving problem~\ref{computehpb} from \(\klcls\) bounds}
\label{sec:kl}
In this section, we return to the resolution of $\dpcp(\xin,T,\varphi)$ defined in~\eqref{pcpprob}. We recall that the data are the following: 
\begin{itemize}
\item an initial conditions set (nonempty) $\xin\subseteq \rd$;
\item a nonzero map $T:\rd\mapsto \rd$;
\item a function $\varphi:\rd\mapsto \rr$ such that $\varphi(0)=0$. 
\end{itemize}
We recall that we want to compute $\nuopt=\sup\{\nu_k:k\in\nn\}$ where $\nu_k=\sup\{\varphi(T^k(x):x\in\xin\}$. To use the results of Section~\ref{sec:preliminaries} with the sequence $\nu=(\nu_k)_{k\in\nn}$, we suppose that Assumption~\ref{mainassum} holds.

To simplify the notation of the supremum of an objective function $f:\rd\mapsto \rr$ over a nonempty set $X$, we introduce the following new notation:
\[
\osps{f}{X}:=\sup\{f(x) : x\in X\}
\]
and the set of functions that are bounded from above on $\xin$:
\[
\fin:=\{f:\rd\mapsto \rr : \osp{f}<+\infty\}
\]
With these notations, $\nu_k=\osp{(\varphi\circ T^k)}$ and Assumption~\ref{mainassum} is the same as for all $k\in\nn$, $\varphi\circ T^k \in\fin$. 

In Subsection~\ref{subsec:klthe}, we show how to solve Problem~\ref{computehpb} from a $\klcls$ upper bound of the images of the reachable values of the dynamical system $(\xin,T)$. Then, with some additional assumptions, we show that we can also solve Problem~\ref{computehpb} from the classical $\klcls$ upper bound. Classical means that the upper bound is a $\klcls$ upper bound of the norm of the reachable values. The existence of such an upper bound is equivalent to the global asymptotic stability of the system. In Subsection~\ref{subsec:klapply}, we apply the theoretical results obtained in Subsection~\ref{subsec:klthe} to the running example introduced in Subsection~\ref{subsec:running}.

\subsection{Theoretical constructions from $\klcls$ functions}
\label{subsec:klthe}
We propose to combine $\klcls$ upper bounds with the celebrated result obtained by Sontag (recalled in Lemma~\ref{sontag}) to construct a function $h\in\secfunh(\nu)$ such that $e^{-1}\in \secfunb(\nu,h)$.
 
\begin{lemma}[Proposition 7 of \cite{SONTAG199893}]
\label{sontag}
For all $\gamma\in\klcls$, there exists $\theta_1,\theta_2\in\kclsi$ such that for all $s,t\in\rr_+$, $\gamma(s,t)\leq \theta_1(\theta_2(s)e^{-t})$.
\end{lemma}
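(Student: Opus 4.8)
The plan is to construct the pair $(\theta_1,\theta_2)\in\kclsi\times\kclsi$ explicitly. It is convenient to start with a harmless normalization: replacing $\gamma$ by $\hat\gamma(s,t):=\gamma(s,t)+s\,e^{-t}$ produces a larger $\klcls$ function (for fixed $t$, $s\mapsto s\,e^{-t}\in\kclsi$; for fixed $s$, $t\mapsto s\,e^{-t}\in\lcls$) with the additional property $\hat\gamma(\cdot,t)\in\kclsi$ for every $t$, and any bound $\hat\gamma(s,t)\le\theta_1(\theta_2(s)e^{-t})$ forces the same bound on $\gamma$ since $\gamma\le\hat\gamma$. So I may assume from the outset that $\gamma(\cdot,t)\in\kclsi$ for every $t$; write $\alpha:=\gamma(\cdot,0)\in\kclsi$, so that $\gamma(s,t)\le\alpha(s)$ for all $s,t\ge 0$.

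Next I would reformulate what has to be shown. Let $\rho\in\kclsi$ be a function still to be chosen and put $\theta_2:=\rho^{-1}\in\kclsi$. After the substitutions $u=\theta_2(s)$ and then $v=u\,e^{-t}$, the desired inequality $\gamma(s,t)\le\theta_1(\theta_2(s)e^{-t})$, asked for all $s,t\ge 0$, is equivalent to
\[
\theta_1(v)\ \ge\ \Phi(v):=\sup_{t\ge 0}\gamma\bigl(\rho(v\,e^{t}),\,t\bigr)\qquad\text{for all }v\ge 0 .
\]
Thus it suffices to choose $\rho\in\kclsi$ so that $\Phi$ is finite everywhere and bounded on bounded sets, with $\Phi(v)\to 0$ as $v\to 0^+$, and then to take any $\theta_1\in\kclsi$ with $\theta_1\ge\Phi$ — such a $\theta_1$ exists by a routine majorization of a locally bounded function that vanishes at the origin.

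The construction of $\rho$ is where the work lies, and it is where the decay of $\gamma$ in its second argument gets used. For each integer $k\ge 1$ one has $\gamma(k+1,\cdot)\in\lcls$, so one may fix reals $0\le T_0\le T_1\le\cdots$, with $T_k\nearrow+\infty$, with fast-growing gaps $T_{k+1}-T_k$, and with $\gamma(k+1,T_{k-1})\le 2^{-k}$ for all $k\ge 1$. Let $\rho\in\kclsi$ be the piecewise-linear function with $\rho(e^{T_k})=k$ for $k\ge 1$, linear on $[0,e^{T_1}]$ and on each $[e^{T_k},e^{T_{k+1}}]$; it grows so slowly that $\rho(w)\le k+1$ whenever $w\le e^{T_{k+1}}$. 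To estimate $\Phi(v)$, substitute $w=v\,e^{t}$ in the supremum (so $t=\ln(w/v)$ and $w$ ranges over $[v,+\infty)$) and split the range of $w$ into $[v,e^{T_1}]$, the finitely many low blocks $[e^{T_k},e^{T_{k+1}}]$, and the high blocks: on $[v,e^{T_1}]$ bound $\gamma$ by $\alpha$ of a constant (using $\gamma(\cdot,\cdot)\le\alpha(\cdot)$ and monotonicity in $t$); on a high block use $\rho(w)\le k+1$ together with $\ln(w/v)\ge T_k-\ln v\ge T_{k-1}$ — this is where the growing gaps and the restriction of $v$ to a bounded set are used — to get $\gamma(\rho(w),\ln(w/v))\le\gamma(k+1,T_{k-1})\le 2^{-k}$. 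This gives $\Phi(v)<+\infty$ and $\Phi$ bounded on bounded sets. For the behaviour at the origin, keep the shift $\ln(1/v)$ in these estimates: given $\varepsilon>0$, the blocks of index $\ge K$ with $2^{-K}<\varepsilon$ contribute at most $\varepsilon$ irrespective of $v$, while the part $[v,e^{T_1}]$ and the finitely many remaining blocks involve $\gamma$ with second argument of order $\ln(1/v)$, hence tend to $0$ as $v\to 0^+$ because $\gamma(K,\cdot)\in\lcls$. Therefore $\Phi(v)\to 0$ as $v\to 0^+$, completing the construction.

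The delicate point is precisely this choice of $\rho$: a single slowly growing $\rho$ must be slow enough that the $\lcls$-decay of $\gamma(\cdot,t)$ in $t$ dominates the growth of $\rho$ along the substituted variable $w=v\,e^{t}$ — this is what makes $\Phi$ finite and is what forces the fast-growing gaps $T_{k+1}-T_k$ — and simultaneously the per-block estimates must be carried out keeping the $\ln(1/v)$ shift and treating the high-index blocks uniformly, so as to obtain $\Phi(v)\to 0$ at the origin. Everything else — the normalization of the first step, the change of variables of the second, and the final majorization $\theta_1\ge\Phi$ — is routine.
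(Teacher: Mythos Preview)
The paper does not give a proof of this lemma; it is quoted as Proposition~7 of \cite{SONTAG199893} and used as a black box, so there is no in-paper argument to compare your attempt against. Your construction is sound in outline and close in spirit to Sontag's original argument, but one step in the verification that $\Phi(v)\to 0$ as $v\to 0^+$ is misstated and, as written, does not go through.

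You assert that on the initial segment $[v,e^{T_1}]$ the second argument of $\gamma$ is ``of order $\ln(1/v)$'' and then invoke $\gamma(K,\cdot)\in\lcls$. This is false near the left endpoint: at $w=v$ one has $\ln(w/v)=0$, so no $t$-decay is available there, and a bound of the form $\gamma(\rho(w),\ln(w/v))\le\gamma(K,c\ln(1/v))$ simply does not hold on all of $[v,e^{T_1}]$. The repair is easy: for small $v$ split $[v,e^{T_1}]$ once more, say at $\sqrt{v}$. On $[v,\sqrt{v}]$ control the \emph{first} argument instead, using $\gamma(\rho(w),\cdot)\le\alpha(\rho(w))\le\alpha(\rho(\sqrt{v}))\to 0$ since $\rho,\alpha\in\kclsi$; on $[\sqrt{v},e^{T_1}]$ the second argument satisfies $\ln(w/v)\ge\tfrac12\ln(1/v)\to\infty$ while the first is bounded by $1$, giving $\gamma(\rho(w),\ln(w/v))\le\gamma(1,\tfrac12\ln(1/v))\to 0$. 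With this amendment the argument is complete; the remaining steps (the normalization $\hat\gamma$, the change of variables yielding $\Phi$, the choice of $T_k$ with growing gaps to get local boundedness of $\Phi$, and the final majorization of $\Phi$ by some $\theta_1\in\kclsi$) are correct as you wrote them.
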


In Subsection~\ref{subsec:realseq}, we saw the importance of studying the limit superior of a real sequence to compute its maximal term. In general, this study is difficult. Here, we can exploit the particular structure of the sequence $\nu$ and the fact that its terms originate from the reachable values of a dynamical system. The limit superior of $\nu$ can be determined from the $\klcls$ upper bounds. Moreover, this functional approach provides natural functions $h\in\secfunh(\nu)$.
\begin{theorem}[Relaxed $\klcls$ construction]
\label{klclassicmod}
Assume that there exist $\gamma\in \klcls$ and a nonnegative function $\psi\in\fin$ such that $\osp{\psi}>0$ satisfying for all $x\in\xin$ and for all $k\in\nn$
\begin{equation}
\label{classicklmod}
\varphi(T^k(x))\leq \gamma(\psi(x),k)\enspace .
\end{equation}
Then:
\begin{enumerate}
\item $\limsup_{n\to +\infty} \nu_n\leq 0$ and if, moreover,  Assumption~\ref{positivity} holds then $\gsls{\nu}\neq \emptyset$;
\item there exists $h\in\kclsi$ s.t. $(h,e^{-1})\in\funset(\nu)$ and, if, moreover, Assumption~\ref{positivity} holds then $h$ is useful for $\nu$.
\end{enumerate}
\end{theorem}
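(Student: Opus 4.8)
The plan is to use Sontag's Lemma (Lemma~\ref{sontag}) to replace the abstract $\klcls$ bound by something of the explicit form $\theta_1(\theta_2(\psi(x))e^{-k})$, and then to repackage this as a bound of the type required by $\funset(\nu)$ with $\beta=e^{-1}$.

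First I would apply Lemma~\ref{sontag} to the given $\gamma\in\klcls$ to obtain $\theta_1,\theta_2\in\kclsi$ with $\gamma(s,t)\leq\theta_1(\theta_2(s)e^{-t})$ for all $s,t\in\rr_+$. Combined with~\eqref{classicklmod} and the nonnegativity of $\psi$, this yields, for every $x\in\xin$ and $k\in\nn$,
\[
\varphi(T^k(x))\leq \theta_1\!\left(\theta_2(\psi(x))\,e^{-k}\right)\leq \theta_1\!\left(\theta_2(\osp{\psi})\,e^{-k}\right),
\]
where the last inequality uses that $\theta_1,\theta_2$ are increasing and that $\psi(x)\leq\osp{\psi}<+\infty$ (here Assumption~\ref{mainassum} via $\psi\in\fin$ is what makes $\osp{\psi}$ finite). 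Taking the supremum over $x\in\xin$ gives $\nu_k\leq \theta_1(\theta_2(\osp{\psi})\,e^{-k})$ for all $k$. This already controls the limit superior: letting $k\to+\infty$, $e^{-k}\to 0$, and $\theta_1,\theta_2$ being continuous with $\theta_1(0)=0$ forces $\limsup_{n\to+\infty}\nu_n\leq 0$. If Assumption~\ref{positivity} holds, some $\nu_k$ is strictly positive, hence strictly greater than $\limsup_n\nu_n\leq 0$, so $\gsls{\nu}\neq\emptyset$; this proves item~1.

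For item~2, I would set $c:=\theta_2(\osp{\psi})>0$ (strictly positive since $\theta_2\in\kclsi$ and $\osp{\psi}>0$) and define $h:\rr\to\rr$ by $h(s):=\theta_1(cs)$ for $s\geq 0$, extended arbitrarily but continuously and strictly increasingly to $\rr$ (e.g. as an affine function on $(-\infty,0)$ matching the value and keeping monotonicity); then $h\in\kclsi$. With $\beta=e^{-1}\in(0,1)$ we have $h(\beta^k)=h(e^{-k})=\theta_1(c\,e^{-k})$, so the bound above reads exactly $\nu_k\leq h(\beta^k)$ for all $k\in\nn$, i.e. $(h,e^{-1})\in\funset(\nu)$; this also requires checking $\nu\in\Lambda$, which follows since $\nu_k\leq h(\beta^k)\leq h(1)$. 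Finally $h(0)=\theta_1(0)=0$, so if Assumption~\ref{positivity} holds the residual set $\res(\nu,h)=\{k\in\nn:\nu_k>0\}$ is nonempty, meaning $h$ is useful for $\nu$ in the sense of Definition~\ref{useful}.

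I expect the only genuinely delicate point to be the bookkeeping around extending $h$ from $[0,\infty)$ to all of $\rr$ while staying in $\fset$ (so that $h^{-1}$ on $[0,1]$ makes sense and the formula~\eqref{mainformula} applies), together with confirming that the composition $s\mapsto\theta_1(cs)$ indeed lies in $\kclsi$ — both are routine given that $\kclsi$ is closed under composition and under postcomposition with a positive scaling. The substantive content is entirely the invocation of Sontag's Lemma to linearize the time-decay to a geometric rate $e^{-1}$; everything else is the uniformization over $x\in\xin$ using $\osp{\psi}<+\infty$.
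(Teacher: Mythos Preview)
Your proposal is correct and follows essentially the same route as the paper: apply Sontag's Lemma to obtain $\theta_1,\theta_2\in\kclsi$, uniformize over $x\in\xin$ via $\osp{\psi}$, and set $h(s)=\theta_1(\theta_2(\osp{\psi})s)$ with $\beta=e^{-1}$. The only cosmetic difference is that for item~1 the paper bounds $\nu_k$ directly by $\gamma(\osp{\psi},k)$ and uses the $\lcls$ property of $t\mapsto\gamma(s,t)$ to conclude $\limsup\nu_n\leq 0$, whereas you route this through the Sontag bound as well; both arguments are valid and the remaining bookkeeping (closure of $\kclsi$ under composition and positive scaling, usefulness from $h(0)=0$) is identical.
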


\begin{proof}
{\itshape 1}. For all $k$, the function $s\mapsto \gamma(s,k)$ is strictly increasing and then for all $x\in\xin$, for all $k\in\nn$, $\gamma(\psi(x),k)\leq \gamma(\osp{\psi},k)$. Taking the supremum over $\xin$, we have $\nu_k\leq \gamma(\osp{\psi},k)$. We conclude by taking the $\limsup$ as $k$ tends to $+\infty$ and recalling that $ \gamma(\osp{\psi},k)$ tends to 0 (as $k$ tends to $+\infty$ as $t\mapsto \gamma(s,t)\in\lcls$ for all $s\in\rr_+$). Finally, under Assumption~\ref{positivity}, there exists $k\in\nn$, such that $\nu_k>0\geq \limsup_{k\to +\infty} \nu_k$.

{\itshape 2}. This is a direct consequence of Sontag's result (Lemma~\ref{sontag} here). Indeed, there exist $\theta_1,\theta_2\in\kclsi$ such that for all $x\in\xin$ and for all $k\in\nn$, $\varphi(T^k(x))\leq \gamma(\psi(x),k)\leq \theta_1(\theta_2(\psi(x))e^{-k})$. Then $\nu_k\leq h(e^{-k}):=\sup_{x\in\xin} \theta_1(\theta_2(\psi(x)e^{-k}))$. Then, as $\theta_1$ and $\theta_2$ are increasing and continuous, $h$ can also be written $\rr_+\ni y\mapsto \theta_1(\theta_2(\osp{\psi}y))$. The function $h$ is thus the composition of $\theta_1$, $ \theta_2$ and $y\mapsto \osp{\psi}y$ that belong to $\kclsi$ (the third as $\osp{\psi}$) and thus belongs to $\kclsi$. Finally, under Assumption~\ref{positivity}, there exists $k\in\nn$ such that $\nu_k>0=h(0)$ and so $h$ is useful in this case. 

\end{proof}

Classically i.e., when $\varphi=\norm{\cdot}$, the inequality $\varphi(T^k(x))\leq \gamma(\psi(x),k)$ (for $\psi=\norm{\cdot}$) is true for all $x\in\rd$ and all $k\in\nn$ if and only if the system $x_{k+1}=T(x_k), x_0\in\rd$ is said to be {\it uniformly global asymptotic stability} (UGAS) (see for example~\cite[Prop. 1]{NESIC20041025} or~\cite[Th. 2]{doi:10.1080/10236190902817844}. This implies that $x_k$ tends to 0 as $k$ tends to $+\infty$. Thus, with a general function $\varphi$, we obtain one-sided information on the asymptotic behavior of the dynamics. However, from Proposition~\ref{khalil} when $\varphi$ is continuous and $\xin$ is bounded, the classical $\klcls$ certificate of stability is sufficient to construct a function $h\in\secfunh(\nu)$ such that $e^{-1}\in \secfunb(\nu,h)$.

\begin{theorem}[Classical $\klcls$ construction]
\label{th:klclassic}
Suppose that $\varphi$ is continuous and $\xin$ is bounded and not reduced to $\{0\}$. Assume that there exists $\gamma\in \klcls$ satisfying for all $x\in\xin$ and for all $k\in\nn$:
\begin{equation}
\label{classickl}
\norm{T^k(x)}\leq \gamma(\norm{x},k)\enspace .
\end{equation}
Then:
\begin{enumerate}
\item $\limsup_{n\to +\infty} \nu_n\leq 0$ and if, moreover,  Assumption~\ref{positivity} holds then $\gsls{\nu}\neq \emptyset$;
\item there exists $h\in\kclsi$ s.t. $(h,e^{-1})\in\funset(\nu)$ and, if, moreover, Assumption~\ref{positivity} holds then $h$ is useful for $\nu$.
\end{enumerate}
\end{theorem}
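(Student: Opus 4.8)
The plan is to deduce this statement from the relaxed construction of Theorem~\ref{klclassicmod}. Concretely, I must exhibit a pair $(\gamma',\psi)$ with $\gamma'\in\klcls$, $\psi\in\fin$ nonnegative and $\osp{\psi}>0$, such that $\varphi(T^k(x))\le\gamma'(\psi(x),k)$ for all $x\in\xin$ and $k\in\nn$; the two conclusions of Theorem~\ref{th:klclassic} are literally the two conclusions of Theorem~\ref{klclassicmod} applied to that pair. The obvious choice is $\psi:=\norm{\cdot}$. Since $\xin$ is bounded, $\osp{\norm{\cdot}}=\sup\{\norm{x}:x\in\xin\}$ is finite, so $\norm{\cdot}\in\fin$; it is nonnegative; and since $\xin$ is not reduced to $\{0\}$ this supremum is strictly positive, so $\osp{\norm{\cdot}}>0$.

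It remains to absorb $\varphi$ into a $\klcls$ function. The only genuine difficulty is that $\varphi$ is not assumed positive definite — it is merely continuous with $\varphi(0)=0$ — so Proposition~\ref{khalil} cannot be applied to $\varphi$ directly. I would get around this by applying Proposition~\ref{khalil} to the coercive regularization $f:\rd\ni y\mapsto \norm{y}+\max\{\varphi(y),0\}$. This $f$ is continuous (sum of $\norm{\cdot}$ and the continuous map $\max\{\varphi,0\}$); it is nonnegative and, since $f(y)\ge\norm{y}$, vanishes only at $0$, hence $f\in\pd(\rd)$; and again since $f(y)\ge\norm{y}$, $f(y)\to+\infty$ as $\norm{y}\to+\infty$. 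Proposition~\ref{khalil} in its $X=\rd$ form therefore yields $\overline\alpha:=\overline{\alpha}^f\in\kclsi$ with $f(y)\le\overline\alpha(\norm{y})$ for every $y\in\rd$; as $\varphi(y)\le\max\{\varphi(y),0\}\le f(y)$, this gives $\varphi(y)\le\overline\alpha(\norm{y})$ for all $y\in\rd$.

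Finally set $\gamma'(s,t):=\overline\alpha(\gamma(s,t))$. Since $\overline\alpha$ is strictly increasing and continuous on $\rr_+$ with $\overline\alpha(0)=0$ and $\overline\alpha\to+\infty$, post-composing with $\overline\alpha$ preserves membership in $\klcls$: for fixed $t$, $s\mapsto\gamma(s,t)\in\kcls$ and its composition with $\overline\alpha\in\kclsi$ remains strictly increasing, continuous and zero at $0$, hence in $\kcls$; for fixed $s$, $t\mapsto\gamma(s,t)\in\lcls$, and composing a continuous decreasing function tending to $0$ with the increasing continuous $\overline\alpha$ fixing $0$ stays in $\lcls$. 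Combining the hypothesis $\norm{T^k(x)}\le\gamma(\norm{x},k)$ with the monotonicity of $\overline\alpha$ and the bound $\varphi(y)\le\overline\alpha(\norm{y})$ gives, for all $x\in\xin$ and $k\in\nn$,
\[
\varphi(T^k(x)) \;\le\; \overline\alpha\bigl(\norm{T^k(x)}\bigr) \;\le\; \overline\alpha\bigl(\gamma(\norm{x},k)\bigr) \;=\; \gamma'(\norm{x},k),
\]
which is exactly inequality~\eqref{classicklmod} of Theorem~\ref{klclassicmod} with $\psi=\norm{\cdot}$ and $\gamma'\in\klcls$. Invoking that theorem delivers both $\limsup_{n\to+\infty}\nu_n\le0$ (with $\gsls{\nu}\neq\emptyset$ under Assumption~\ref{positivity}) and the existence of $h\in\kclsi$ with $(h,e^{-1})\in\funset(\nu)$ (useful for $\nu$ under Assumption~\ref{positivity}). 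I expect the only delicate point to be this reduction step — namely recognizing that the absence of positive definiteness of $\varphi$ is harmless because only a one-sided ($\kclsi$) upper bound on $\varphi$ is needed, which Proposition~\ref{khalil} supplies after the regularization $y\mapsto\norm{y}+\max\{\varphi(y),0\}$.
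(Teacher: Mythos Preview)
Your proof is correct and follows essentially the same route as the paper: reduce to Theorem~\ref{klclassicmod} with $\psi=\norm{\cdot}$ by manufacturing a $\kclsi$ upper bound on $\varphi$ via Proposition~\ref{khalil} applied to a coercive positive-definite regularization of $\varphi$. The only cosmetic difference is the choice of regularization --- the paper uses $\overline{\varphi}(x)=\max\{\norm{x},\varphi(x)\}$ where you use $\norm{x}+\max\{\varphi(x),0\}$ --- and both work for the same reasons.
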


\begin{proof}
It suffices to obtain an inequality of the form of~\eqref{classicklmod} from the inequality~\eqref{classickl} and then to use Theorem~\ref{klclassicmod}. 
 
If $\varphi$ is continuous, then from Proposition~\ref{khalil},
as $\overline{\varphi}:\rd\ni x\mapsto \max\{\norm{x},\varphi(x)\}$ is continuous, belongs to $\pd(\rd)$ (as $\varphi(0)=0$) and satisfies $\overline{\varphi}(x)$ tends to $+\infty$ as $\norm{x}$ tends to $+\infty$. Hence, we have $\varphi\leq \overline{\varphi}\leq \overline{\alpha}^{\overline{\varphi}}\circ \norm{\cdot}$ with $\overline{\alpha}^{\overline{\varphi}}\in\kclsi$. Then, from inequality~\eqref{classickl}, we obtain $\varphi(T^k(x))\leq \overline{\alpha}^{\overline{\varphi}}(\norm{T^kx})\leq \overline{\alpha}^{\overline{\varphi}}(\gamma(\norm{x},k))$ for all $x\in\xin$. Thus the inequality~\eqref{classicklmod} holds with $\gamma:=\overline{\alpha}^{\overline{\varphi}}\circ \gamma\in \klcls$ and $\psi:=\norm{\cdot}\in\fin$ that verifies $\osp{\psi}>0$ (as $\xin$ bounded and non reduced to $\{0\}$). We conclude from Theorem~\ref{klclassicmod}.

\end{proof}

\begin{remark}
The inequality~\eqref{classickl} forces $T$ to be zero at the origin. This also implies that all sequences $(T^k(x))_k$ starting at $x\in \xin$ converge to 0. The inequality~\eqref{classicklmod} does not imply such properties for $T$. 
\end{remark}

The approach using the functional upper bound detailed in~\eqref{classicklmod} relies on Sontag’s result, which is not completely constructive. Therefore, it would be complicated to construct a function $h\in \secfunh(\nu)$ from $\gamma$ and $\psi$ appearing in~\eqref{classickl} in practice for general systems. Hopefully, we can develop the latter approach to solve, for well-chosen initial condition sets $\spex$, the problems $\dpcp(\spex,\speh,\pi_i)$ presented in Subsection~\ref{subsec:running}.

\subsection{Application to the running example of Subsection~\ref{subsec:running}}
\label{subsec:klapply}
First, we recall that we want to compute, for a subset $\spex$ satisfying the condition~\eqref{initassum}, $\omega_{\spex,{\rm opt}}^1$ and $\omega_{\spex,{\rm opt}}^2$, which are respectively the supremum of the sequences $\omega_\spex^1$ and $\omega_\spex^2$ defined in~\eqref{eq:dpcpw1} and in~\eqref{eq:dpcpw2}. We have to construct for all $i\in\{1,2\}$, a function $h\in\secfunh(\omega_\spex^i)$ such that $e^{-1}\in\secfunb(\omega_\spex^i,h)$. Actually, we will see that we can choose the same function $h$ for 
$\omega_\spex^1$ and $\omega_\spex^2$ and we can construct this function from a $\klcls$ upper bound. From this $h$, we can define $\mathfrak{F}_{\omega_\spex^i}$ for all $i\in\{1,2\}$.
Next, we explicitly define two finite initial condition sets, $\spex$. For each, we define two vectors with numerical values. Then, we apply Algorithm~\ref{algomaincst} to compute the supremum of $\omega_\spex^1$ and $\omega_\spex^2$ for the two finite initial condition sets. 

\subsubsection{Theoretical developments of the running example using a $\klcls$ upper bound function.}
\label{klthrunn}
We observe, since: 
\[
\speg(s)<e^{-1} \text{ and } s>0\iff 0<s<\sqrt{64e^{-1}-1}+1=:\urho,
\]
that $\norm{\speh(x)}^2<e^{-1} \norm{x}^2$ if and only if $0<\norm{x}^2<\urho$. 
Let $r\in (0,\urho)$ and $\{0\}\neq \spex \subseteq \cball{\sqrt{r}}$. Then, for $i\in\{1,2\}$ and for all $z\in \spex$ and $k\in\nn$:
\[
\pi_i(\speh^k(x))\leq \sqrt{\norm{\speh^k(x)}^2}\leq e^{-k/2}\norm{x} \enspace.
\]
Then we have $\pi_i(\speh^k(x))\leq \gamma(\psi(x),k)$ for all $x\in \spex$ and all $k\in\nn$, where:
\[
\gamma(s,t):=s\sqrt{e^{-t}} \text{ and } \psi=\norm{\cdot} \enspace .
\]
The function $\gamma$ is clearly in $\klcls$ and $\psi$ is clearly nonnegative. As $\spex$ is not reduced to $\{0\}$, $\osps{\psi}{\spex}$ is strictly positive.

Assumption~\ref{positivity} holds for both $\pi_1$ and $\pi_2$. Indeed, if $x$ is a nonzero vector such that $\norm{ \speh^k(x)}$ converges to 0, then for all $i\in\{1,2\}$, $\pi_i(\speh^k(x))$ is strictly positive for some integer $k$. A proof is provided in the appendix. The existence of $\klcls$ bound implies that for all $x\in \spex$, $\norm{\speh^k(x)}$ converges to 0.

As mentioned previously, in general, the functions $\theta_1,\theta_2$ of Sontag's result (Lemma~\ref{sontag} here) are not known explicitly. As we have obtained an inequality of the form~\eqref{klclassicmod} where $e^{-1}$ appears explicitly, we have identified $\theta_1,\theta_2$ and we can directly deduce a function $h_\spex^i\in\secfunh(\omega_\spex^i)$ such that $e^{-1}\in\secfunb(\omega_\spex^i,h_\spex^i)$. The same function can be chosen for $i=1$ and $i=2$ and we simply write $h_\spex$ defined as follows:
 \begin{equation}
 \label{eq:hklrun}
 h_\spex:\rr_+\ni s\mapsto \osps{\norm{\cdot}}{\spex}  \sqrt{s}\enspace .
 \end{equation}
As $h_\spex(0)=0$, we have for $i=1,2$, $\res(\omega_\spex^i,h)=\{k\in\nn : \omega_{\spex,k}^i>0 \}$.  It is straightforward to see that
\[
h_\spex^{-1}:\rr_+\ni s\mapsto \left(\dfrac{s}{\osps{\norm{\cdot}}{\spex}}\right)^2 \enspace.
\]
Now since, for all $i\in\{1,2\}$, we dispose of an element of $\funset(\omega_\spex^i)$, we can solve $\dpcp(\spex,\speh,\pi_i)$ using $\mathfrak{F}_{\omega_\spex^i}$ and Algorithm~\ref{algomaincst}. Very simple calculi lead to the formula for all $i\in\{1,2\}$ and all $k$ such that $\omega_{\spex,k}^i>0$:
\[
\mathfrak{F}_{\omega_\spex^i}(k,h,e^{-1})=\dfrac{\ln(h_\spex^{-1}(\omega_{\spex,k}^i))}{\ln(e^{-1})}=2\ln(\osps{\norm{\cdot}}{\spex})-2\ln(\omega_{\spex,k}^i)\enspace .
\]
\subsubsection{Numerical applications.}
\label{klapplirun}
As discussed earlier, we consider two different finite initial condition sets:
\begin{enumerate}
\item $\spex_a:=\{x^1,x^2\}$ where $x^1:=(-1.3,-0.3)^\intercal$ and $x^2:=(-1.1,-0.8)^\intercal$;
\item $\spex_b:=\{y^1,y^2\}$ where $y^1:=(-2.3,0.013)^\intercal$ and $y^2:=(0.7,-2.29)^\intercal$.
\end{enumerate}
To simplify the notations, we write for all $i\in\{1,2\}$, $\omega_{a}^i=(\omega_{a,k}^i)_{k\in\nn}$ and $\omega_{b}^i=(\omega_{b,k}^i)_{k\in\nn}$ instead of $\omega_{\spex_a}^i$ and $\omega_{\spex_b}^i$. Recall that, for all  $i\in\{a,b\}$, $j\in\{1,2\}$ and $k$:
\[
\omega_{i,k}^j:=\sup_{x\in \spex_i} \pi_j(\speh^k(x)).
\]
As $\spex_a$ and $\spex_b$ are now explicit, we can refine the definition of the function $h_\spex$. We write $h_a$ for the function associated with $\spex_a$ and $h_b$ for the one associated with $\spex_2$. Thus, as 
\[
(\osps{\norm{\cdot}}{\spex_a})^2=\max\{\norm{x^1}^2,\norm{x^2}^2\}=1.85\text{ and }
(\osps{\norm{\cdot}}{\spex_b})^2=\max\{\norm{y^1}^2,\norm{y^2}^2\}=5.7341
\]
we get:
\[
h_a^{-1}:s\mapsto \dfrac{s^2}{1.85} \text{ and } h_b^{-1}:s\mapsto \dfrac{s^2}{5.7341}
\]
and we have, for all $j\in\{1,2\}$ and all $k$ such that $\omega_{a,k}^j>0$ :
\[
\mathfrak{F}_{\omega_{a}^j}(k,h_a,e^{-1})=\ln(1.85)-2\ln\left(\max\{\pi_j(\speh^k(x^{1})),\pi_j(\speh^k(x^{2}))\}\right)
\]
and for all $j\in\{1,2\}$ and all $k$ such that $\omega_{b,k}^j>0$:
\[
\mathfrak{F}_{\omega_{b}^j}(k,h_b,e^{-1})=\ln(5.7341)-2\ln\left(\max\{\pi_j(\speh^k(y^{1})),\pi_j(\speh^k(y^{2}))\}\right)
\]
Then, we are able to apply Algorithm~\ref{algomaincst} to compute $\omega_{i,{\rm opt}}^j$ and the maximal integer solution $k_{i,{\rm max}}^j$ for $i\in\{a,b\}$ and $j\in\{1,2\}$.  

\paragraph{The case $\spex_a=\{x^1,x^2\}$ where $x^1=(-1.3,-0.3)^\intercal$ and $x^2=(-1.1,-0.8)^\intercal$}\hphantom{
Let us start with the initial conditions set $\spex_a$.}

To determine the number of images required for the search of the maximum, we need to compute $\mathfrak{F}_{\omega_{a}^j}(k,h_a,e^{-1})$. However, this is only possible if and only if $\omega_{a,k}^j>0$. So we have to wait for the first integer $k_i$ such that $\max\{\pi_j(\speh^{k_i}(x^1)),\pi_j(\speh^{k_j}(x^2))\}$ is strictly positive. The coordinates of $x^1$ and $x^2$ are negative, so we move on and compute: 
\[
\speh(x^1)=\begin{pmatrix}-357/4000\\-767/4000\end{pmatrix} \text{ and } \speh(x^2)=\begin{pmatrix}-27/1600\\-89/400\end{pmatrix}
\]
The vectors still have negative coordinates, and we compute
\[
\speh^2(x^1)=\begin{pmatrix}0.03463\\0.01174\end{pmatrix} \text{ and } \speh^2(x^2)\simeq \begin{pmatrix}0.02982\\0.02432\end{pmatrix}.
\]
 The vectors $\speh^2(x^1)$ and $\speh^2(x^2)$ have strictly positive coordinates, and we can compute our stopping integers:
\[
\begin{array}{ll}
&\displaystyle{\mathfrak{F}_{\omega_{a}^1}(2,h_a,e^{-1})=\ln(1.85)-2\ln\left(\max\{\pi_1(\speh^2(x^1),\pi_1(\speh^2(x^2))\}\right)\approx 7.3415}\\
\text{ and } &\displaystyle{\mathfrak{F}_{\omega_{a}^2}(2,h_a,e^{-1})=\ln(1.85)-2\ln\left(\max\{\pi_2(\speh^2(x^1),\pi_2(\speh^2(x^2))\}\right)\approx 8.0482}
\end{array}
\]
This means that the stopping integer $K$ in Algorithm~\ref{algomaincst} is set to 7 for $\omega_a^1$ and to 8 for $\omega_a^2$ and 
\[
\omega_{a,{\rm opt}}^1=\max\{\omega_{a,k}^1 : k\in\{0,1,\ldots,7\}\} \text{ and }\omega_{a,{\rm opt}}^2=\max\{\omega_{a,k}^2 : k\in\{0,1,\ldots,8\}\}.
\]
 Following Algorithm~\ref{algomaincst}, we compute the $\max\{\pi_j(\speh^k(x^1)),\pi_j(\speh^k(x^2))\}$ until $k$ reaches the stopping integer. As during the process, we have not found greater values than $\omega_{a,2}^i$, we conclude that $\omega_{a,{\rm opt}}^j=\omega_{a,2}^j$ and $k_{a,{\rm max}}^j=2$ for all $j\in\{1,2\}$.

\paragraph{The case $\spex_b=\{y^1,y^2\}$ where $y^1=(-2.3,0.013)^\intercal$ and $y^2=(0.7,-2.29)^\intercal$}\hphantom{
Let us start with the initial conditions set $\spex_b$.}

Contrary to the case $\spex_a$, as $y_1^2=\omega_{b,0}^1=0.7$ and $y_2^1=\omega_{b,0}^2=0.013$, we can compute stopping integers at $k=0$:
\[
\begin{array}{cc}
&\mathfrak{F}_{\omega_{b}^1}(0,h_b,e^{-1})=\ln(5.7341)-2\ln(0.7)\approx 2.4598 \\
\text{ and } &\mathfrak{F}_{\omega_{b}^2}(0,h_b,e^{-1})=\ln(5.7341)-2\ln(0.013)\approx 10.432\enspace .
\end{array}
\]
This leads to the stopping integer $K$ in Algorithm~\ref{algomaincst} being set to 2 for ${\omega_{b}^1}$ and 10 for ${\omega_{b}^2}$. Hence
\[
\omega_{b,{\rm opt}}^1=\max\{\omega_{b,0}^1,\omega_{b,1}^1,\omega_{b,2}^1\} \text{ and }\omega_{b,{\rm opt}}^2=\max\{\omega_{b,k}^2 : k\in\{0,1,\ldots,10\}\}.
\]
Now, we compute the images of $y^1$ and $y^2$ by $\speh$: 
\[
\speh(y^1)\simeq\begin{pmatrix} -1.23505\\-0.28053\end{pmatrix}\text{ and }\speh(y^2)\simeq \begin{pmatrix}0.70048\\-1.26764\end{pmatrix}.
\]
 As $\pi_1(\speh(y^2))>0.7$, we can update the stopping integer for ${\omega_{b}^1}$ and:
\[
\mathfrak{F}_{\omega_{2}^1}(1,h_b,e^{-1})=\ln(5.7341)-2\ln(\pi_1(\speh(y^2)))\approx 2.4584.
\]
Following the fifth statement of Proposition~\ref{mainprop}, this is slightly smaller than $\mathfrak{F}_{\omega_{b}^1}(0,h_b,e^{-1})$ but its integer part is still equal to 2 for ${\omega_{b}^1}$. 
The second coordinate of $\speh(y^1)$ and of $\speh(y^2)$ are negative, then we cannot update the stopping integer for ${\omega_{b}^2}$. Next, we have
\[
\speh^2(y^1)\simeq \begin{pmatrix}-0.05819\\
  -0.17556\end{pmatrix}\text{ and } \speh^2(y^2)\approx \begin{pmatrix}0.25456\\-0.08636\end{pmatrix}.
\] 
As $\pi_1(\speh^2(y^2))$ is strictly less than $\pi_1(\speh(y^2))$ and the stopping integer for $\omega_b^1$ is equal to 2, it follows that $\omega_{b,{\rm opt}}^1=\pi_1(\speh(y^2))$ and $k_{b,{\rm max}}^1=1$. Since $\pi_2(\speh^2(y^1))$ and $\pi_2(\speh(y^2))$ are negative then nothing changes for ${\omega_{b}^2}$. We continue to iterate for $\omega_b^2$ until $k=10$. We have 
\[
\speh^3(y^1)\approx \begin{pmatrix}0.02897\\0.01392\end{pmatrix} \text{ and } \speh^3(y^2)\approx\begin{pmatrix}-0.01873\\ 0.04183\end{pmatrix}
\]
thus $\omega_{b,3}^2=\pi_2(\speh^3(y^2))>0.013$, which was the maximal previous value. We then update the value of the stopping integer $K$ of Algorithm~\ref{algomaincst} for $\omega_b^2$ and obtain
\[
\mathfrak{F}_{\omega_{b}^2}(3,h_b,e^{-1})=\ln(5.7341)-2\ln(\pi_2(\speh^3(y^2)))\approx 8.0945.
\]
The new stopping integer $K$ is equal to 8 for $\omega_b^2$. The next values are smaller than $\pi_2(\speh^3(y^2))$, which implies that $\omega_{b,{\rm opt}}^2=\pi_2(\speh^3(y^2))$ and $k_{b,{\rm max}}^2=3$.
\section{Solving Problem~\ref{computehpb} from Lyapunov functions}
\label{sec:lyap}
Classically, Lyapunov functions are used as certificates of stability (see ~\cite{elaydiintroduction} or~\cite{kelley2001difference}). They bring a constructive approach to prove the stability of the dynamical system through converse Lyapunov theorems, which proved that stability notions are equivalent to the existence of Lyapunov functions~\cite{khalil2002nonlinear}. For some linear, polynomial, or piecewise polynomial systems, Lyapunov functions can be computed using numerical optimization solvers based on linear or semidefinite programming (e.g., see~\cite{giesl2015review} and references therein). 

Classical quadratic Lyapunov functions solve Problem~\ref{computehpb} when $T$ is affine and stable, and $\varphi$ is quadratic (see~\cite{adje13052025}). In this section, we prove that this approach can be generalized to all discrete-time systems for which a Lyapunov function exists.

\subsection{Definition and useful facts}
We begin by recalling the definition of classical Lyapunov functions for discrete-time systems. We briefly present the equivalent definitions of the decrease condition required to be a Lyapunov function. For topological notions, we recall that $\displaystyle{\norm{\cdot}}$ is the Euclidean norm. However, the context of finite-dimensional space makes the results independent of the choice of the norm.

For $F:\rd\mapsto \rd$, we denote by $\stab(F)$ the set of closed subsets of $\rd$ containing 0 but non reduced to 0 and stable by $F$ i.e.,
\[
\stab(F):=\{\dset\subseteq \rd : \dset \text{ closed },\ 0\in\dset\neq \{0\},\ F(\dset)\subseteq \dset\}
\]
Following Lazar~\cite{lazarthesis}, we define local Lyapunov functions on stable sets. We slightly relax the notion of stable sets, called classically positive or forward invariant sets. Indeed, classically, positive invariant sets are our stable sets with an auxiliary condition: 0 belongs to the interior of the stable set. This condition is not considered stability problems in this paper.   
\begin{defi}[Classical Lyapunov functions]
\label{lyapclassic}
A function $V:\rd\to \rr$ is said to be a (local) Lyapunov function for $F:\rd\mapsto\rd$ on $X\in\stab(F)$ if and only if:
\begin{enumerate}
\item There exist $\alpha_1,\alpha_2\in\kcls$ such that, for all $x\in X$: 
\[
\alpha_1(\norm{x})\leq V(x)\leq \alpha_2(\norm{x})
\]
\item There exists $\lambda\in (0,1)$ such that for all $x\in X$: 
\[
V(F(x)) \leq \lambda V(x)
\]
\end{enumerate}
\end{defi}

\begin{example}
\label{ex:lyapratio}
In~\cite{li2014computation}, the authors propose the following function:
\begin{equation}
\label{lyapunovrun}
\spev:\rr^2\ni x\mapsto \max\{\norm{x}^2,e\norm{\speh(x)}^2\}
\end{equation}
as a (local) Lyapunov function for the dynamics $\speh$ of our running example presented in Subsection~\ref{subsec:running}. 

Next, we prove that $\spev$ is a Lyapunov function for $\speh$ on any $\cball{\sqrt{r}}$ for which $r<\orho$. In this example, we prove the first statement of Definition~\ref{lyapclassic}. Let $r$ be in $(0,\orho)$. Thus, from the discussion of Subsection~\ref{subsec:running}, we have $\norm{\speh(x)}^2<\norm{x}^2$ for all nonzero $x$ in $\cball{\sqrt{r}}$. Then, for all $x$ in $\cball{\sqrt{r}}$, $\norm{x}^2\leq \spev(x)\leq \max\{\norm{x}^2,e \norm{x}^2\}=e\norm{x}^2$. We conclude that there exist $\alpha_1,\alpha_2\in\kclsi$ ($\alpha_1:\rr_+\ni s\mapsto s^2$ and $\alpha_1:\rr_+\ni s\mapsto e s^2$) such that $\alpha_1(\norm{x})\leq \spev(x)\leq \alpha_2(\norm{x})$. 

We will prove the second statement by computing the {\it local Lyapunov ratio operator} of $\speh$.

\end{example}
First, due to the equivalence of norms in $\rd$ and since any element of $\kcls$ is increasing if the first statement holds for $\norm{\cdot}$, it holds for all norms on $\rd$. Second, combining the results and discussions found in~\cite[Remark 5]{GEISELHART201449}, \cite[Def. 5, Th.4]{7040251} and~\cite[Lemma 25, Corollary 1]{DBLP:journals/mcss/Kellett14}, the condition that there exists $\lambda\in (0,1)$ such that $V\circ F \leq \lambda V$ can be equivalently replaced by one of those statements:
\begin{enumerate}
\item there exists $\alpha:\rr_+\mapsto \rr_+$ positive definite and continuous such that $V\circ F\leq V-\alpha\circ \norm{\cdot}$;
\item there exists $\rho:\rr_+\mapsto \rr_+$ positive definite, continuous and such that $\Idd-\rho$ is positive definite satisfying $V\circ F\leq \rho\circ V$.
\end{enumerate}
First, we remark that from the first statement of Definition~\ref{lyapclassic}, $V$ is positive definite on any closed ball for which the radius belongs to the domain of $\alpha_1$. If we require $V$ to be continuous and $\dset$ contains 0 in its interior, from Proposition~\ref{khalil}, the first statement of Definition~\ref{lyapclassic} is actually equivalent to $V\in\pd(\dset)$. Second, if there exists a classical Lyapunov function for $F$, $F$ must be null at zero\footnote{Actually, if $F(e)=e$ for a nonzero vector, we could replace $V$ by $V(\cdot-e)$ and the lower and the upper bounds on $V(\cdot-e)$ by respectively $\alpha_1(\norm{\cdot-e})$ and $\alpha_2(\norm{\cdot-e})$}. Finally, from the second statement of Definition~\ref{lyapclassic}, we can define a type of norm operator relative to a Lyapunov function. We propose a general definition of \emph{local positive definite ratio operators} and then specify the results for local Lyapunov functions.
\begin{defi}[Local positive definite ratio operators]
Let $F:\rd \to \rd$ such that $F(0)=0$ and $X\in \stab(F)$. The local positive definite ratio operator of $F$ on $X$ is the following map: 
\label{lyapopnorm}
\begin{equation}
\label{oplyapunov}
\pd(X)\ni W:\to N_{F}^X(W):=\sup_{\substack{x\in X \\ x\neq 0}} \dfrac{W(F(x))}{W(x)}
\end{equation}
\end{defi}
Note that the local positive definite ratio operator $N_{F}^X$ is always nonnegative and can take infinite values.

In Proposition~\ref{ratioprop}, we present some useful properties of the local positive definite ratio operator $N_{F}^X$.
\begin{prop}
\label{ratioprop}
Let $F:\rd \to \rd$ such that $F(0)=0$ and $X\in \stab(F)$. Then, for all $W\in\pd(X)$:
\begin{enumerate}
\item $N_\Idd^X(W)=1$ where $\Idd:\rd\ni x\mapsto x$;
\item for all $x\in X\backslash\{0\}$, $W(F(x))\leq N_F^X(W) W(x)$;
\item for all $k\in\nn^*$, $N_{F^k}^X(W)\leq (N_{F}^X(W))^k$;
\item there exists $\lambda\in (0,1)$ such that for all $x\in X$, $W(F(x))\leq \lambda W(x)$ if and only if $N_F^X(W)\in (0,1)$. 
\end{enumerate}
\end{prop}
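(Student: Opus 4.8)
The plan is to prove the four statements in order, as statements 2–4 build on the earlier ones. Throughout, write $N = N_F^X(W)$ for brevity. The only structural facts I need are: $W \in \pd(X)$ means $W$ is nonnegative on $X$ and vanishes exactly at $0$; $X \in \stab(F)$ means $X$ is closed, contains $0$, is not $\{0\}$, and $F(X) \subseteq X$; and $F(0)=0$. Note in particular that $X \setminus \{0\}$ is nonempty, so the supremum in \eqref{oplyapunov} is over a nonempty set and each quotient $W(F(x))/W(x)$ is well-defined (the denominator $W(x)$ is strictly positive for $x \neq 0$) and nonnegative.

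For statement 1, when $F = \Idd$ the quotient $W(\Idd(x))/W(x) = W(x)/W(x) = 1$ for every nonzero $x \in X$ (here $X \in \stab(\Idd)$ trivially since $\Idd(X) = X$), so the supremum equals $1$. Statement 2 is immediate from the definition: for $x \in X \setminus \{0\}$ we have $W(F(x))/W(x) \le N$, and multiplying through by $W(x) > 0$ gives $W(F(x)) \le N\, W(x)$; the inequality is vacuously an equality at $x = 0$ since $W(F(0)) = W(0) = 0$, so in fact $W(F(x)) \le N W(x)$ holds for all $x \in X$. For statement 3, I proceed by a short induction on $k$, using that $F^k(X) \subseteq X$ (so $N_{F^k}^X(W)$ is meaningful) and applying statement 2 iteratively: for nonzero $x \in X$, since $F^{k-1}(x) \in X$, statement 2 gives $W(F^k(x)) = W(F(F^{k-1}(x))) \le N\, W(F^{k-1}(x)) \le N \cdot N^{k-1} W(x) = N^k W(x)$, where the last inequality uses the induction hypothesis in the form $W(F^{k-1}(x)) \le N^{k-1} W(x)$ (valid for all $x \in X$, including when $F^{k-1}(x) = 0$). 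Dividing by $W(x) > 0$ and taking the supremum over nonzero $x \in X$ yields $N_{F^k}^X(W) \le N^k$.

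Statement 4 is the one requiring the most care, and I expect the subtlety — rather than a real obstacle — to be the ``only if'' direction together with the strict positivity claim $N > 0$. For the ``if'' direction: if $N \in (0,1)$, then by statement 2, for all $x \in X$ we have $W(F(x)) \le N\, W(x)$, so $\lambda := N \in (0,1)$ works. For the ``only if'' direction, suppose $\lambda \in (0,1)$ satisfies $W(F(x)) \le \lambda W(x)$ for all $x \in X$; then for nonzero $x \in X$, dividing by $W(x) > 0$ gives $W(F(x))/W(x) \le \lambda$, so taking the supremum yields $N \le \lambda < 1$. It remains to show $N > 0$, equivalently $N \neq 0$: if $N = 0$ then $W(F(x)) = 0$ for every $x \in X$, hence $F(x) = 0$ for every $x \in X$ since $W$ is positive definite; but then, picking any nonzero $x_0 \in X$ (which exists as $X \neq \{0\}$), we would need $F(x_0) = 0$, which is compatible with $F(x_0)=0$ — so $N=0$ is \emph{not} in itself contradictory for a general $F$. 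Hence the correct reading is that the hypothesis ``there exists $\lambda \in (0,1)$ with $W \circ F \le \lambda W$'' forces $N \le \lambda$, and $N = 0$ is an admissible degenerate case; I would therefore state the equivalence as $N \in [0,1)$, or else add the standing hypothesis (implicit when $V \circ F \le \lambda V$ comes from a Lyapunov function with $F$ not identically zero on $X$) that $F$ is not the zero map on $X$, under which $N > 0$. In the write-up I will adopt whichever convention matches Definition \ref{lyapclassic}, noting that for a genuine Lyapunov function $V$ the lower bound $\alpha_1(\norm{x}) \le V(x)$ already prevents $V$ from collapsing, but does not by itself prevent $F \equiv 0$; the cleanest honest statement is the two-sided one $N_F^X(W) \in [0,1)$, which is what I would prove.
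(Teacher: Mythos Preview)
Your proofs of statements 1--3 are correct and follow essentially the same line as the paper's; your induction for statement~3 is in fact slightly smoother, since you first extend statement~2 to all of $X$ and then iterate the product inequality $W(F(y))\le N\,W(y)$ directly, whereas the paper separates out the cases $N=+\infty$ and $N=0$ and, in the inductive step, restricts to points with $W(F^{k+1}(x))>0$ in order to split the quotient $W(F^{k+1}(x))/W(x)$ as a product of two ratios. Both arguments are valid; yours simply avoids that case analysis.

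On statement~4 you have correctly identified a genuine imprecision that the paper's one-line proof (``It follows readily from the definition'') does not address: if $F\equiv 0$ on $X$ then any $\lambda\in(0,1)$ satisfies $W\circ F\le\lambda W$, yet $N_F^X(W)=0\notin(0,1)$, so the forward implication fails. The paper itself invokes exactly this possibility in its proof of statement~3, so it is not excluded by the hypotheses of the proposition. Your proposed repair---either conclude $N_F^X(W)\in[0,1)$, or add the hypothesis that $F$ does not vanish identically on $X$---is the correct one; in the paper's downstream use (the corollary for genuine Lyapunov functions, where $T$ is a nonzero map) the issue does not bite, but as a statement about arbitrary $F$ and $W\in\pd(X)$ your more careful formulation is what should be proved.
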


\begin{proof}
{\itshape 1}. This is a straightforward application of the definitions.

{\itshape 2}. The result clearly holds if $N_F^X(W)$ is equal to $+\infty$. Then, if $N_F^X(W)$ is finite, directly applying the definition of $N_F^X(W)$ and the positivity of $W$ lead to the result.

{\itshape 3}. The result holds when $N_F^X(W)$ is equal to $+\infty$. Now, suppose that $N_F^X(W)$ is finite. If $N_F^X(W)=0$, then $W(F(x))$ for all $x\in X\backslash\{0\}$. It follows that $F(x)=0$ for all $x\in X$. Finally, $F^k(x)=0$ for all $k\in\nn^*$ and $x\in X$. Then $N_{F^k}^X(W)=0$ and the inequality holds. If $N_{F^k}^X(W)=0$ for some $k\in\nn$ the inequality obviously holds. We then suppose that $N_F^X(W)$ and for all $k\in\nn^*$ $N_F^k(W)$ are strictly positive. We now apply induction. The result obviously holds for $k=1$. Assume that it holds for some $k\in\nn^*$. Let $x\in X\backslash\{0\}$ such that $W(F^{k+1}(x))>0$. It follows that $W(F^k(x))>0$. Then $W(F^{k+1}(x))/W(x)=(W(F^{k+1}(x))/W(F^k(x))) \times (W(F^k(x))/W(x))$. As $X\in\stab(F)$, $F^{k+1}(x)$ and $F^k(x)$ belong to $X$ and are not equal to 0. Finally, $W(F^{k+1}(x))/W(x)\leq N_F^X(W)\times N_{F^k}^X(W)\leq (N_{F}^X(W))^{k+1}$ using the induction hypothesis. We obtain the inequality by taking the supremum over $X\backslash\{0\}$.

{\itshape 4}. It follows readily from the definition. 
\end{proof}

The fourth statement of Proposition~\ref{ratioprop} is not difficult to prove. However, it is worth noting that if there exists $\lambda\in (0,1)$ such that for all $x\in X$, $W(F(x))\leq \lambda W(x)$, then $N_F^X(W)\leq \lambda$. For our applications, this latter inequality is important. Indeed, $N_F^X(W)$ will be used as an element $\beta\in\secfunb(h,\nu)$ for a function $h$ to be determined. Since $\lambda$ also belongs to $\secfunb(h,\nu)$, following Proposition~\ref{simplefactFu}, we will have $\mathfrak{F}_\nu(k,h,N_F^X(W))\leq  \mathfrak{F}_\nu(k,h,\lambda)$. This will be highlighted in Paragraph~\ref{comparekllyap}.

\begin{corollary}
Let $F$ be a self-map on $\rd$. Suppose that $F$ admits a local Lyapunov function $V$ on $X\in\stab(F)$. Then: 
\begin{enumerate}
\item $N_F^X(V)\in (0,1)$;
\item for all $x\in X$, $V(F(x))\leq N_F^X(V) V(x)$;
\item for all $k\in\nn$, $N_{F^k}^X(V)\leq (N_{F}^X(V))^k$.
\end{enumerate}
\end{corollary}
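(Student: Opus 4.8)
The plan is to deduce the three assertions directly from Proposition~\ref{ratioprop} applied with $W:=V$, using only the two defining properties of a local Lyapunov function from Definition~\ref{lyapclassic}. The very first thing I would check is that $V\in\pd(X)$, so that the local positive definite ratio operator $N_F^X(V)$ is well-defined: by the first item of Definition~\ref{lyapclassic} there are $\alpha_1,\alpha_2\in\kcls$ with $\alpha_1(\norm{x})\leq V(x)\leq\alpha_2(\norm{x})$ for all $x\in X$, and since elements of $\kcls$ are strictly increasing, continuous and vanish only at $0$, this forces $V(x)\geq 0$ everywhere on $X$ with $V(x)=0$ if and only if $x=0$; hence $V\in\pd(X)$. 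Note also that Definition~\ref{lyapclassic} implicitly needs $F(0)=0$ (since otherwise the inequality $V(F(0))\leq\lambda V(0)=0$ would force $F(0)=0$ anyway), so the standing hypotheses of Definition~\ref{lyapopnorm} and Proposition~\ref{ratioprop} are met with $X\in\stab(F)$.

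For the first assertion, the second item of Definition~\ref{lyapclassic} gives a $\lambda\in(0,1)$ with $V(F(x))\leq\lambda V(x)$ for all $x\in X$; this is exactly the hypothesis of the fourth item of Proposition~\ref{ratioprop} (in its ``if'' direction), which yields $N_F^X(V)\in(0,1)$. The second assertion is then immediate: once $N_F^X(V)$ is known to be finite (indeed in $(0,1)$), the second item of Proposition~\ref{ratioprop} gives $V(F(x))\leq N_F^X(V)\,V(x)$ for all $x\in X\setminus\{0\}$, and the inequality also holds trivially at $x=0$ since both sides vanish (using $F(0)=0$ and $V(0)=0$). For the third assertion, I would simply invoke the third item of Proposition~\ref{ratioprop} with $W=V$, which gives $N_{F^k}^X(V)\leq(N_F^X(V))^k$ for all $k\in\nn^*$; the case $k=0$ is the content of the first item of Proposition~\ref{ratioprop}, namely $N_{F^0}^X(V)=N_\Idd^X(V)=1=(N_F^X(V))^0$, so the bound extends to all $k\in\nn$ as stated.

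I do not expect a genuine obstacle here, since the corollary is essentially a repackaging of Proposition~\ref{ratioprop} for the special case $W=V$. The only point requiring a modicum of care is the bookkeeping at $x=0$ and at $k=0$ — making sure the stated inequalities are interpreted correctly at the boundary cases — together with the observation that a local Lyapunov function is automatically positive definite on $X$, which is what licenses the application of the proposition in the first place. Everything else is a direct citation.
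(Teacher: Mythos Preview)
Your proof is correct and follows exactly the same approach as the paper: each item is obtained by invoking the corresponding item of Proposition~\ref{ratioprop} (the fourth for item~1, the second for item~2 together with the trivial case $x=0$, and the third for item~3), after noting that $N_F^X(V)$ is finite. Your additional checks that $V\in\pd(X)$ and that the case $k=0$ follows from $N_\Idd^X(V)=1$ are welcome bits of bookkeeping that the paper leaves implicit.
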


\begin{proof}
The first statement follows from the fourth statement of Proposition~\ref{ratioprop}. The second statement is a reformulation of the second statement of Proposition~\ref{ratioprop}, for which the inequality holds for $x=0$ as $N_F^X(V)$ is finite. The finiteness of $N_F^X(V)$ also extends the third statement of Proposition~\ref{ratioprop}. 

\end{proof}

\begin{example}[The local positive definite ratio operator of $\speh$ at $\spev$ on closed balls]
\label{ratiooprun}
Now, let us consider $r\in (0,\orho)$. To go further into our computations, we must compute the local positive definite ratio operator on $\cball{\sqrt{r}}$ of $\speh$ at the Lyapunov candidate $\spev$ defined in~\eqref{lyapunovrun}. For sake of simplicity, we write $N_\speh^r(\spev)$ instead of $N_\speh^{\cball{\sqrt{r}}}(\spev)$.

First, we suppose that $r\leq \urho$. It follows that $\spev(x)=\norm{x}^2$. Moreover, as $r<\orho$,
we also have $\norm{\speh(x)}^2<\norm{x}^2\leq r$ and so, by assumption, $\norm{\speh(x)}^2\leq \urho$ and $\spev(\speh(x))=\norm{\speh(x)}^2$. Finally, in the case where $r\leq \urho$, we have
\[
N_\speh^r(\spev)=\sup_{\substack{\norm{x}^2\leq r\\ x\neq 0}} \dfrac{\spev(\speh(x))}{\spev(x)}=\sup_{\substack{\norm{x}^2\leq r\\ x\neq 0}} \dfrac{\norm{\speh(x)}^2}{\norm{x}^2}=\sup_{\substack{\norm{x}^2\leq r\\ x\neq 0}}\dfrac{1+(\norm{x}^2-1)^2}{64}=\sup_{\substack{\norm{x}^2\leq r\\ x\neq 0}} \speg(\norm{x}^2)
\]
The function $\speg$ is strictly decreasing on $[0,1]$ and strictly increasing on $[1,+\infty)$ and for all $0<s<2$, $\speg(s)<1/32=\speg(0)=\speg(2)$, hence for all $r\leq \orho$:
 \[
N_\speh^r(\spev)=\left\{\begin{array}{lr}\dfrac{1}{32} & \text{ if } r\in (0,2]\\
\\
\dfrac{1+(r-1)^2}{64} & \text{ if } r\in (2,\urho]\end{array}\right. 
\]
When $r=\urho$, we get the equality $N_\speh^r(V)=\speg(\urho)=e^{-1}$. Futhermore, since $\speg$ is increasing on $(2,\urho]$, $N_\speh^r(\spev)\leq \speg(\urho)=e^{-1}$.

Now, let consider $r\in(\urho,\orho)$ and $x\in\rr^2$ such that $\norm{x}^2\in (\urho,r]$. Then we have $\spev(z)=e\norm{\speh(z)}^2$ and $\spev(\speh(x))=e\norm{\speh^2(x)}$ if $\norm{\speh(x)}^2\geq \urho$ and $\spev(x)=\norm{\speh(x)}^2$ if $\norm{\speh(x)}^2< \urho$. Then:
{ \everymath={\displaystyle}
\[
\begin{array}{ll}
N_\speh^r(\spev)=\sup_{\substack{\norm{x}^2\leq r\\ x\neq 0}} \dfrac{\spev(\speh(x))}{\spev(x)}
&=\max\left\{\sup_{\substack{\norm{x}^2\leq \urho}} \dfrac{\spev(\speh(x))}{\spev(x)},\sup_{\substack{\urho<\norm{x}^2\leq r\\ x\neq 0}} \dfrac{\spev(\speh(x))}{\spev(x)}\right\}\\
&=\max\left\{\dfrac{1}{e},\sup_{\substack{\urho<\norm{x}^2\leq r\\ \norm{\speh(x)}^2\leq \urho}} \dfrac{\spev(\speh(x))}{\spev(x)},\sup_{\substack{\urho<\norm{x}^2\leq r\\ \norm{\speh(x)}^2>\urho}} \dfrac{ \spev(\speh(x))}{\spev(x)}\right\}\\
&=
\max\left\{\dfrac{1}{e},\sup_{\substack{\urho<\norm{x}^2\leq r\\ \norm{\speh(x)}^2\leq \urho}} \dfrac{\norm{\speh(x)}^2}{e\norm{\speh(x)}^2},\sup_{\substack{\urho<\norm{x}^2\leq r\\ \norm{\speh(x)}^2>\urho}} \dfrac{e\norm{\speh^2(x)}^2}{e\norm{\speh(x)}^2}\right\}\\
&=
\max\left\{\dfrac{1}{e},\sup_{\substack{\urho<\norm{x}^2\leq r\\ \norm{\speh(x)}^2>\urho}} \dfrac{1}{64}\left(1+(\norm{\speh(x)}^2-1)^2\right) \right\}\\
&=
\max\left\{\dfrac{1}{e},\sup_{\substack{\urho<\norm{x}^2\leq r\\ \norm{\speh(x)}^2>\urho}} \speg(\spef(\norm{x}^2)) \right\}
\end{array}
\]
}
Let us introduce the set 
\begin{equation}
\label{eq:cstlyapratio}
S_r:=\{s\in (\urho,r] : \spef(s)>\urho\}.
\end{equation} 
We thus are interested in the supremum of $\speg(\spef(\norm{x}^2))$ for all $\norm{x}^2\in S_r$. As $\spef$ is strictly increasing on $\rr_+$, there exists a unique real root denoted by $\beta^*$ for the polynomial function of degree 3, $\spef(\cdot)-\urho$. As $\spef(\urho)=\urho/e<\urho$, we have $\beta^*>\urho$. We do not require the exact value of $\beta^*$. In fact, $S_r$ is nonempty if and only if $\spef(r)>\urho$. Indeed, if there exists $s\in S_r$, then, as $\spef$ strictly increases, $\spef(r)\geq \spef(s)>\urho$ and $r\in S_r$. Conversely, it is obvious that $S_r$ is nonempty if $r\in S_r$. Note that if $S_r$ is empty, the supremum over it is $-\infty$ making $N_\speh^r(\spev)=e^{-1}$ for all $r\in (\urho,\beta^*]$. Suppose that $S_r$ is nonempty. As $\spef(\urho)=\urho/e>1$, $\speg\circ \spef$ is strictly increasing on $S_r$ and for all $s\in S_r$:
\[
\speg(\spef(s))\leq \speg(\spef(r))=\dfrac{1}{64}\left(1+\left(\dfrac{1}{64}r(1+(r-1)^2)-1\right)^2\right)<\speg(\spef(\orho))=1.
\]
Finally, we obtain for all $r<\orho$:
\begin{equation}
\label{eq:runratio}
N_\speh^r(\spev)=\left\{\begin{array}{lr}  \speg(\spef(r)) & \text{ if } r>\urho\text{ and } f(r)>\urho\\
e^{-1} & \text{ if } r>\urho \text{ and }  \spef(r)\leq \urho\\
\dfrac{1+(r-1)^2}{64} & \text{ if } r\in (2,\urho]\\
\dfrac{1}{32} & \text{ if } r\in (0,2] \\
\end{array}\right.
\end{equation}
We conclude that $\spev$ satisfies for all $r<\orho$:
\begin{enumerate}
\item for all $z\in \cball{\sqrt{r}}$, $\norm{x}^2\leq \spev(x)\leq e\norm{x}^2$;
\item there exists $\lambda \in (0,1)$ such that for all $x\in \cball{\sqrt{r}})$, $\spev(\speh(x))\leq \lambda \spev(x)$
\end{enumerate}
and $\spev$ is a Lyapunov function for $\speh$ on all $\cball{\sqrt{r}}$ where $r<\orho$. 

\end{example}

\subsection{Theoretical constructions from Lyapunov functions}
\label{subsec:lyapres}
In this subsection, we explain how to construct a solution for Problem~\ref{computehpb} from a Lyapunov function. We recall that a solution of Problem~\ref{computehpb} is an $(h,\beta)\in \funset(\nu)$ ($\nu$ defined in~\eqref{nudef}) useful when $\gsls{\nu}\neq \emptyset$. First, we establish a direct result derived from Theorem~\ref{klclassicmod} without any continuity condition on $\varphi$. 

We formulate a simple lemma asserting that the supremum of a local Lyapunov function is finite over nonempty bounded sets contained in a suitable stable set. The result would be evident if the local Lyapunov function is (upper semi)continuous, but continuity is not mandatory in Definition~\ref{lyapclassic}. 
\begin{lemma}
\label{finitelyap}
Let $F:\rd\mapsto \rd$. Assume that $F$ admits a local Lyapunov function $V$ on $X\in\stab(F)$. Then, for all nonempty bounded sets $B\subseteq X$, $\osps{V}{B} <+\infty$.
\end{lemma}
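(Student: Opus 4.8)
The key fact to exploit is the upper bound from the first condition of Definition~\ref{lyapclassic}: there exists $\alpha_2 \in \kcls$ with $V(x) \leq \alpha_2(\norm{x})$ for all $x \in X$. So the plan is to bound $\osps{V}{B}$ by $\alpha_2$ evaluated at the radius of a ball containing $B$, using that $\alpha_2$ is increasing. The only subtlety is that $\alpha_2$ is a priori defined only on $[0,a)$ for some $a > 0$ (possibly finite), so I must first check that $B$ is contained in the domain where the bound is valid.

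First I would fix a nonempty bounded set $B \subseteq X$ and set $\rho := \sup\{\norm{x} : x \in B\}$, which is finite since $B$ is bounded. The bound $V(x) \leq \alpha_2(\norm{x})$ holds for every $x \in X$, hence for every $x \in B$. Since $\alpha_2 \in \kcls$, it is (strictly) increasing on its domain $[0,a)$, so for every $x \in B$ with $\norm{x} < a$ we have $V(x) \leq \alpha_2(\norm{x}) \leq \alpha_2(\rho')$ for any $\rho' \in [\rho, a)$ — and then $\osps{V}{B} \leq \alpha_2(\rho') < +\infty$. The point I need to nail down is that $\rho < a$: since the inequality $V(x) \leq \alpha_2(\norm{x})$ is asserted for \emph{all} $x \in X$, and $B \subseteq X$, the value $\alpha_2(\norm{x})$ must be defined for every $x \in B$, i.e.\ $\norm{x} < a$ for all $x \in B$; taking the supremum, $\rho \leq a$, and if $\rho = a$ one can still pick any point-wise bound — actually cleaner is to note directly that each $V(x)$ for $x\in B$ is bounded by $\alpha_2(\norm{x}) \le \alpha_2(\rho)$ when $\rho$ is in the domain, and handle the boundary case $\rho = a$ (if $a < \infty$) by observing every $\norm{x} < a = \rho$ so $\alpha_2(\norm{x}) \le \lim_{s\to a^-}\alpha_2(s)$; but the simplest route is to take any $\rho' \in (\rho, a)$ when $\rho < a$, which is the generic situation, and remark that $\rho = a$ forces $\sup$ over $B$ of $\norm{\cdot}$ to be attained-in-limit only.

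Actually, the truly clean argument avoids the boundary fuss entirely: since the bound is stated for all $x \in X$, for \emph{each} $x \in B$ the quantity $\alpha_2(\norm{x})$ is a well-defined real number, and $V(x) \le \alpha_2(\norm{x})$. If $\alpha_2$ has domain $[0,a)$ with $a \le \rho$ this would contradict $\alpha_2(\norm{x})$ being defined for points of norm approaching $\rho$; hence $a > \rho$ (or $a = +\infty$). Pick $\rho' \in (\rho, a)$. Then $\osps{V}{B} = \sup_{x \in B} V(x) \le \sup_{x \in B}\alpha_2(\norm{x}) \le \alpha_2(\rho') < +\infty$, using monotonicity of $\alpha_2$ for the last inequality-but-one. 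This completes the proof. I do not expect a real obstacle here; the only thing requiring a moment's care is the interplay between the domain $[0,a)$ of the $\kcls$ function $\alpha_2$ and the boundedness of $B$, which is resolved precisely because Definition~\ref{lyapclassic} asserts the two-sided bound for every point of $X$ (not merely on a proper subset), forcing $a$ to exceed $\sup_{x\in B}\norm{x}$.
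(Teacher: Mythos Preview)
Your approach is the same as the paper's---use the upper bound $V(x)\le\alpha_2(\norm{x})$ from Definition~\ref{lyapclassic} and exploit monotonicity of $\alpha_2$---but your handling of the domain issue has a genuine gap. You correctly observe that $\alpha_2(\norm{x})$ must be defined for every $x\in B$, which gives $\norm{x}<a$ for all $x\in B$ and hence $\rho=\sup_{x\in B}\norm{x}\le a$. However, your claimed strict inequality $a>\rho$ does not follow: if the supremum $\rho$ is \emph{not attained} in $B$, then every $x\in B$ satisfies $\norm{x}<\rho=a$, so $\alpha_2(\norm{x})$ is perfectly well defined for each $x\in B$, and there is no contradiction. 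In that scenario $\sup_{x\in B}\alpha_2(\norm{x})=\lim_{s\to a^-}\alpha_2(s)$, which may be $+\infty$ (think of $\alpha_2(s)=(a-s)^{-1}-a^{-1}$, as in the paper's Remark following the lemma). Your ``clean argument'' therefore does not close the case $\rho=a$.

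The paper fills exactly this gap by invoking the \emph{closedness} of $X$ (built into the definition of $\stab(F)$), a hypothesis you never use. Since $B\subseteq X$ and $X$ is closed, the closure $\overline{B}$ is a compact subset of $X$, so $\rho=\max_{y\in\overline{B}}\norm{y}=\norm{\overline{y}}$ for some $\overline{y}\in X$. Because the bound $V(x)\le\alpha_2(\norm{x})$ is asserted for all $x\in X$, the value $\alpha_2(\norm{\overline{y}})=\alpha_2(\rho)$ is defined, forcing $\rho<a$ strictly. Then $\osps{V}{B}\le\alpha_2(\rho)<+\infty$. This is the missing step in your argument.
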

\begin{proof}
Let $F$ be a self-map on $\rd$ and $V$ be a local Lyapunov function on $X\in\stab(F)$. Let $B$ be a nonempty bounded set contained in $X$. By definition, there exists $\alpha\in\kcls$, such that $V(x)\leq\alpha(\norm{x})$ for all $x\in X$. Then, as $\alpha$ is strictly increasing, $\osps{V}{B}\leq \sup_{x\in B} \alpha(\norm{x})=\alpha( \sup_{x\in B}\norm{x})=\alpha(\max_{x\in \overline{B}} \norm{x})=\alpha(\norm{\overline{y}})$ for some $\overline{y}\in X$ as $X$ is closed. We conclude that $\alpha$ is defined at $\osps{\norm{\cdot}}{B}$ and thus $\osps{V}{B}<+\infty$.

\end{proof}

\begin{remark}
Lemma~\ref{finitelyap} justifies the closedness of elements on $\stab(U)$. As a matter of fact, the definition of being a Lyapunov function may allow functions $\alpha:[0,c)\ni x\mapsto (c-x)^{-1}-c^{-1}$ as functions $\alpha_1,\alpha_2\in \kcls$. If such a function is defined on the set $\{\norm{x}:x\in X\}$ where $X$ is a closed set, then for all $x\in X$, $\norm{x}<c$. This implies that $X$ is compact, and then $\sup_{x\in X} \norm{x}=\norm{\overline{x}}<c$ for some $\overline{x}\in X$. On the contrary, let $Y=\{x\in\rd : \norm{x}<c\}$. If the function $\alpha:[0,c)\ni x\mapsto (c-x)^{-1}-c^{-1}$ is defined on the set $\{\norm{y}: y\in Y\}$. As $\sup_{y\in Y} \norm{y}=c$, so $\alpha$ is not defined on $\sup_{y\in Y} \norm{y}$.
\end{remark}

\begin{theorem}[Direct Lyapunov construction]
\label{directlyap}
Suppose that $\xin$ is bounded and not reduced to $\{0\}$. Suppose that there exists a Lyapunov function $V$ for $T$ on $\dset\in\stab(T)$ such that $\xin\subseteq \dset$. If 
\begin{equation}
\label{inegvarlyap}
\varphi(x)\leq V(x) \text{ for all } x\in\dset,
\end{equation}
 then:
\begin{enumerate}
\item $\limsup_{n\to +\infty} \nu_n\leq 0$ and if, moreover,  Assumption~\ref{positivity} holds then $\gsls{\nu}\neq \emptyset$;
\item the function:
\[
h:\rr_+\ni s\mapsto s\osp{V}
\]
belongs to $\secfunh(\nu)$ and $N_T^\dset(V)\in \secfunb(\nu,h)$ and, if, moreover, Assumption~\ref{positivity} holds then $h$ is useful for $\nu$.
\end{enumerate}
\end{theorem}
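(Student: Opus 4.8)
The plan is to reduce Theorem~\ref{directlyap} to Theorem~\ref{klclassicmod} by manufacturing, from the Lyapunov function $V$, a pair $(\gamma,\psi)$ satisfying the hypotheses of the relaxed $\klcls$ construction. The natural choice is $\psi:=V$ restricted to $\xin$ and a $\klcls$ function $\gamma$ built from the decrease rate of $V$ along $T$.

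\textbf{Step 1: $\psi:=V$ is in $\fin$ with positive supremum.} Since $\xin\subseteq\dset$ is bounded and not reduced to $\{0\}$, Lemma~\ref{finitelyap} gives $\osp{V}<+\infty$, so $V\in\fin$. Moreover $V$ is positive definite on $\dset$ (from the lower bound $\alpha_1(\norm{\cdot})\leq V$ with $\alpha_1\in\kcls$), so since $\xin\neq\{0\}$ there is a nonzero $x\in\xin$ with $V(x)>0$, hence $\osp{V}>0$. Also $V\geq 0$ on $\dset$, so $\psi$ is nonnegative.

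\textbf{Step 2: build $\gamma\in\klcls$.} Set $\lambda:=N_T^\dset(V)$. By the corollary following Proposition~\ref{ratioprop}, $\lambda\in(0,1)$ and $V(T^k(x))\leq\lambda^k V(x)$ for all $x\in\dset$ and $k\in\nn$ (using the third statement, $N_{T^k}^\dset(V)\leq\lambda^k$, and the second statement applied to $T^k$). Combining with~\eqref{inegvarlyap} applied at the point $T^k(x)\in\dset$ (here one uses $\dset\in\stab(T)$ so $T^k(x)\in\dset$), we get $\varphi(T^k(x))\leq V(T^k(x))\leq\lambda^k V(x)$. Now define $\gamma:\rr_+\times\rr_+\to\rr_+$ by $\gamma(s,t):=s\,\lambda^{t}$ (i.e.\ $\gamma(s,t)=s e^{t\ln\lambda}$); since $\lambda\in(0,1)$, for fixed $t$ the map $s\mapsto\gamma(s,t)$ is in $\kcls$ and for fixed $s$ the map $t\mapsto\gamma(s,t)$ is continuous, decreasing, and tends to $0$, so $t\mapsto\gamma(s,t)\in\lcls$; hence $\gamma\in\klcls$. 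Then $\varphi(T^k(x))\leq\gamma(\psi(x),k)$ for all $x\in\xin$, $k\in\nn$, which is exactly~\eqref{classicklmod}.

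\textbf{Step 3: apply Theorem~\ref{klclassicmod} and identify $h$.} With the hypotheses of Theorem~\ref{klclassicmod} now verified, item~1 of that theorem yields $\limsup_{n\to+\infty}\nu_n\leq 0$, and $\gsls{\nu}\neq\emptyset$ under Assumption~\ref{positivity}. For item~2, rather than invoking Sontag's lemma abstractly, I would argue directly as in the proof of Theorem~\ref{klclassicmod} item~2 but with the explicit $\gamma$: taking the supremum over $\xin$ in $\varphi(T^k(x))\leq\psi(x)\lambda^k$ gives $\nu_k\leq\osp{V}\,\lambda^k=h(\lambda^k)$ with $h(s):=s\,\osp{V}$. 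Since $\osp{V}>0$, $h$ is linear with positive slope, hence strictly increasing and continuous on $\rr_+\supseteq[0,1]$, so $h\in\secfunh(\nu)$ and $\lambda=N_T^\dset(V)\in\secfunb(\nu,h)$. Finally $h(0)=0$, so under Assumption~\ref{positivity} there is $k$ with $\nu_k>0=h(0)$, i.e.\ $h$ is useful for $\nu$.

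\textbf{Main obstacle.} There is no deep obstacle; the argument is a clean reduction. The one point requiring care is the compounding estimate $V(T^k(x))\leq\lambda^k V(x)$: it must be obtained through the ratio operator bound $N_{T^k}^\dset(V)\leq(N_T^\dset(V))^k$ (third statement of the corollary, which is why the finiteness of $N_T^\dset(V)$ established in Step~1 via Lemma~\ref{finitelyap} is needed) together with the invariance $\dset\in\stab(T)$ guaranteeing $T^k(x)\in\dset$ so that both~\eqref{inegvarlyap} and the definition of $N_T^\dset$ apply along the whole orbit. A minor second point is noting that the explicit $h(s)=s\osp{V}$ obviates any appeal to Sontag's lemma, giving a genuinely constructive $h$ (in contrast to the generic Theorem~\ref{klclassicmod}).
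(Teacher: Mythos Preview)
Your proof is correct and follows essentially the same route as the paper: establish $\varphi(T^k(x))\leq (N_T^\dset(V))^k V(x)$ via invariance of $\dset$, the inequality~\eqref{inegvarlyap}, and the compounding bound $N_{T^k}^\dset(V)\leq(N_T^\dset(V))^k$, then take the supremum over $\xin$ using Lemma~\ref{finitelyap} to get $\nu_k\leq h(\lambda^k)$ with $h(s)=s\osp{V}$. The only cosmetic difference is that the paper argues directly rather than explicitly packaging the estimate as a pair $(\gamma,\psi)$ fed into Theorem~\ref{klclassicmod}; it relegates exactly your reduction (with $\gamma(s,t)=s\exp(t\ln N_T^\dset(V))$) to the remark following the proof.
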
  

\begin{proof}
Let $x\in\xin$ and $k\in\nn$. As $\dset\in\stab(T)$ and $\xin\subseteq \dset$, then, $T^k(x)\in \dset$ and, from \eqref{inegvarlyap}, $\varphi(T^k(x))\leq V(T^k(x))$. Moreover, as $V$ is a Lyapunov function, from Lemma~\ref{finitelyap}, $\osp{V}$ is finite and we have $\varphi(T^k(x))\leq N_{T^k}^\dset(V)V(x)\leq (N_T^\dset(V))^k \osp{V}=h((N_T^\dset(V))^k)$. The inequality involving $\nu_k$ follows by taking the supremum over $\xin$. As $V$ is a Lyapunov function on $\dset$, $N_T^\dset(V)\in (0,1)$. This implies that the first statement as $(N_T^\dset(V))^k \osp{V}$ tends to 0. As $\xin\neq \{0\}$, then $\osp{V}>0$ and from Lemma~\ref{finitelyap}, as $\xin$ is bounded, $\osp{V}<+\infty$. We obtain the second statement from this. 

\end{proof}

\begin{remark}
Actually, the function $\gamma(s,t):\rr_+\times \rr_+\ni (s,t)\mapsto s\exp(t \ln( N_{T}^\dset(V)))$ is a $\klcls$ function and as $\xin$ is bounded and not reduced to $\{0\}$, $V$ belongs to $\fin$ and verifies $\osp{V}>0$. Theorem~\ref{klclassicmod} thus applies. As we do not depend on Sontag's lemma, we have an explicit description of the element $(h,\beta)\in\funset(\nu)$.
\end{remark}

If we assume that $\varphi$ is continuous, the inequality~\eqref{inegvarlyap} is no longer required. To recover a link between $\varphi$ and a Lyapunov function, it suffices to use Proposition~\ref{khalil}. 

\begin{theorem}[Continuous Lyapunov construction]
\label{lyapunovversion}
Assume that $\xin$ is bounded and not reduced to $\{0\}$. Suppose that $\varphi$ is continuous and there exists a Lyapunov function $V$ for $T$ on $\dset\in\stab(T)$ such that $\xin\subseteq \dset$. Finally, suppose that there exists $\alpha_V\in\kcls$ satisfying $\alpha_V(\norm{x})\leq V(x)$ for all $x\in\dset$ and $\alpha_V(s)>\osp{V}$ for some $s\in (0,+\infty)$. 
Let $\alpha\in\kclsi$ verifying $\varphi(x)\leq \alpha(\norm{x})$ for all $x\in\rd$. We define:
\[
h:\rr_+\ni s\mapsto \alpha\left(\alpha_V^{-1}(s\osp{V})\right) 
\]
Then $(h,N_T^\dset(V))\in\funset(\nu)$. If Assumption~\ref{positivity} holds, then $h$ is useful for $\nu$.
\end{theorem}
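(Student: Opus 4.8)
The plan is to reduce Theorem~\ref{lyapunovversion} to the already-proven Theorem~\ref{directlyap} by replacing the objective $\varphi$ with a suitable Lyapunov-dominated function, exactly as Theorem~\ref{th:klclassic} reduced to Theorem~\ref{klclassicmod}. First I would verify the bookkeeping: since $\varphi$ is continuous with $\varphi(0)=0$, Proposition~\ref{khalil} applied to $\overline{\varphi}:x\mapsto\max\{\norm{x},\varphi(x)\}$ (continuous, in $\pd(\rd)$, and coercive) yields an $\alpha\in\kclsi$ with $\varphi(x)\leq\alpha(\norm{x})$ for all $x\in\rd$; this is the $\alpha$ named in the statement, so its existence is guaranteed and need not be assumed. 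The function $\alpha_V^{-1}$ is well-defined on $[0,\alpha_V(s^*))$ for the $s^*$ witnessing $\alpha_V(s^*)>\osp{V}$, hence on a neighbourhood of $[0,\osp{V}]$, so $h(s)=\alpha\bigl(\alpha_V^{-1}(s\osp{V})\bigr)$ is defined on $[0,1]$; being a composition of the strictly increasing continuous maps $s\mapsto s\osp{V}$, $\alpha_V^{-1}$, and $\alpha$, it lies in $\fset$ (in fact is in $\kcls$ on $[0,1]$ with $h(0)=0$ since $\alpha_V(0)=\alpha(0)=0$).

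The core estimate is the chain, for $x\in\xin$ and $k\in\nn$: using $\xin\subseteq\dset$ and $\dset\in\stab(T)$ we have $T^k(x)\in\dset$, so $\varphi(T^k(x))\leq\alpha(\norm{T^k(x)})$; from $\alpha_V(\norm{y})\leq V(y)$ on $\dset$ we get $\norm{T^k(x)}\leq\alpha_V^{-1}(V(T^k(x)))$ provided $V(T^k(x))$ stays in the domain of $\alpha_V^{-1}$; and by the corollary to Proposition~\ref{ratioprop} (third statement), $V(T^k(x))\leq N_{T^k}^\dset(V)\,V(x)\leq (N_T^\dset(V))^k\,\osp{V}\leq\osp{V}$, which indeed keeps the argument below $\alpha_V(s^*)$. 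Since $\alpha$ and $\alpha_V^{-1}$ are increasing, chaining gives $\varphi(T^k(x))\leq\alpha\bigl(\alpha_V^{-1}((N_T^\dset(V))^k\osp{V})\bigr)=h\bigl((N_T^\dset(V))^k\bigr)$. Taking the supremum over $x\in\xin$ yields $\nu_k\leq h\bigl((N_T^\dset(V))^k\bigr)$ for all $k$, and since $N_T^\dset(V)\in(0,1)$ (fourth statement of Proposition~\ref{ratioprop}, as $V$ is a Lyapunov function), we conclude $(h,N_T^\dset(V))\in\funset(\nu)$.

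For usefulness, assume Assumption~\ref{positivity}: there is $k\in\nn$ and $x\in\xin$ with $\nu_k\geq\varphi(T^k(x))>0=h(0)$, so $\res(\nu,h)\neq\emptyset$ and $h$ is useful for $\nu$ by Definition~\ref{useful}. I expect no deep obstacle here; the one point requiring genuine care is the \emph{domain} issue — ensuring $\alpha_V^{-1}$ is evaluated only where it is defined. This is precisely what the hypothesis ``$\alpha_V(s)>\osp{V}$ for some $s\in(0,+\infty)$'' buys us, together with the monotone decay $(N_T^\dset(V))^k\osp{V}\leq\osp{V}$; one must also invoke Lemma~\ref{finitelyap} to know $\osp{V}<+\infty$ (so that $[0,\osp{V}]$ is a genuine compact interval) and $\xin\neq\{0\}$ to know $\osp{V}>0$ (so that $h$ is not identically zero on $(0,1]$ and the reduction is non-degenerate). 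Once the domain is handled, the argument is a routine monotonicity chase, and alternatively one could phrase the whole thing as an instance of Theorem~\ref{klclassicmod} with $\gamma(s,t):=\alpha\bigl(\alpha_V^{-1}(s\,\exp(t\ln N_T^\dset(V)))\bigr)\in\klcls$ and $\psi:=V\in\fin$, which avoids re-deriving the $\limsup$ bound.
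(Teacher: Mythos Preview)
Your proof is correct and follows essentially the same chain of inequalities as the paper's own proof, with more explicit attention to the domain of $\alpha_V^{-1}$ and to the finiteness of $\osp{V}$ via Lemma~\ref{finitelyap}. One small caveat: your opening sentence announces a reduction to Theorem~\ref{directlyap}, but you never actually carry that out---and, as the paper's Remark immediately following the proof observes, such a reduction would require the stronger hypothesis $\alpha_V(s)>\osps{V}{\dset}$ rather than merely $\alpha_V(s)>\osp{V}$; fortunately the argument you actually execute is the direct monotonicity chase and does not depend on that reduction.
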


\begin{proof}
Let $x\in\xin$ and $k\in\nn$. As $\xin\subseteq \dset$ and $\dset\in\stab(T)$, $T^k(x)\in\dset$ and thus $\alpha_V(\norm{T^k(x)})\leq V(T^k(x))$. As $V$ is Lyapunov, we then have $\alpha_V(\norm{T^k(x)})\leq V(T^k(x))\leq N_{T^k}^\dset(V) V(x)\leq N_T^{\dset}(V)^k V(x)$. As $x\in\xin$, it follows that $\alpha_V(\norm{T^k(x)})\leq N_T^{\dset}(V)^k\osp{V}$. By assumption, $\osp{V}<\alpha_V(s)$ for some $s\in (0,+\infty)$. We also have $N_T^{\dset}(V)^k\in (0,1)$. Then,
$\alpha_V^{-1}(N_T^{\dset}(V)^k\osp{V}))$ exists. Since $\varphi\leq \alpha\circ \norm{\cdot}$ on $\rd$ with $\alpha$ increasing, the inequality $\varphi(T^k(x))\leq \alpha(\norm{x})\leq \alpha(\alpha_V^{-1}((N_T^{\dset}(V)^k\osp{V})$ holds. Taking the supremum over $\xin$ leads to $(h,N_T^\dset(V))\in\funset(\nu)$. As $\alpha,\alpha_V\in\kcls$, $h(0)=0$ and thus $h$ is useful if Assumption~\ref{positivity} holds.

\end{proof}

\begin{remark}
If we impose the strongest condition: $\alpha_V(s) > \osps{V}{\dset}$ for some $s\in (0,+\infty)$ (instead of $\alpha_V(s) > \osp{V}$), we can directly use Theorem~\ref{directlyap}. Indeed, if this stronger condition holds, we can deduce $\tilde{\varphi}:=\alpha_V(\alpha^{-1}(\varphi(x))\leq V(x)$ for all $x\in\dset$.
\end{remark}

\subsection{Numerical applications to our running example of Subsection~\ref{subsec:running}}

We propose to illustrate the theoretical results obtained in Subsection~\ref{subsec:lyapres} on the running example presented in Subsection ~\ref{subsec:running}. First, we explicit the function $h\in\funset(\omega_\spex^i)$. Then, we compare the results obtained from the Lyapunov approach with those obtained with $\klcls$ upper bounds. Finally, we complete the numerical experiments with new data not treated with the $\klcls$ approach.  

\subsubsection{Theoretical developments of the running example using $\spev$}
We can return to the discrete-time peak computation problems presented in Subsection~\ref{subsec:running}. Recall that the objective functions are the coordinate functions $\pi_1$ and $\pi_2$. It is evident that $\varphi(x)\leq \norm{x}=\alpha_\varphi(\norm{x})$ for all $x\in\rr^2$ where $\alpha_\varphi:\rr_+\ni :s\mapsto s$. Recall that $\spev$ defined in~\eqref{lyapunovrun} is proved in Example~\ref{ratiooprun} to be a Lyapunov function for $\speh$ on all $\cball{\sqrt{r}}$ where $r<\orho$. It follows that $\alpha_\spev(\norm{x})=\norm{x}^2\leq \spev(x)$ for all $x\in\rr^2$ where $\alpha_\spev:\rr_+\ni s\mapsto s^2$. 

Now let $r<\orho$. Suppose that $\spex$ is such that $\spex\subseteq \cball{\sqrt{r}}$ and there exists $x\in \spex$ with $\norm{x}^2=r$. As the computation depends on $r$, we write $\osps{\spev}{\spex}_r=\sup_{x\in\spex} V(x)$. Then, by definition of $\spev$, $\osps{\spev}{\spex}_r=\sup_{x\in\spex} \max\{\norm{x}^2,e\norm{\speh(x)}^2\}$. By assumption on $\spex$, $\osps{\spev}{\spex}_r=\max\{r,\spef(r)\}$. Now, following Theorem~\ref{lyapunovversion}, 
\begin{equation}
\label{eq:hlyaprun}
h_{\spev,r}:\rr^+\ni s\mapsto \sqrt{s\osps{\spev}{\spex}_r}
\end{equation}
satisfies $(h_{\spev,r},N_\speh^r(\spev))\in\funset(\omega_\spex^i)$ for all $i\in\{1,2\}$. This means that for all $k\in\nn$ 
\[
\omega_{\spex,k}^1=\sup_{x\in\spex} \pi_1(\speh^k(x))\leq \sqrt{N_\speh^r(\spev)\osps{\spev}{\spex}_r}\text{ and } \omega_{\spex,k}^2=\sup_{x\in\spex} \pi_2(\speh^k(x))\leq \sqrt{N_\speh^r(\spev)\osps{\spev}{\spex}_r}\enspace.
\] 
Finally, it is easy to see that \[h_{\spev,r}^{-1} :\rr_+\ni s\to s^2/\osps{\spev}{\spex}_r.\]
Since $h_{\spev,r}(0)=0$, we only apply $h_{\spev,r}^{-1}$ to positive terms $\omega_{\spex,k}^1$ and $\omega_{\spex,k}^2$.
\subsubsection{Comparison with $\klcls$ upper bounds results}
\label{comparekllyap}
In Subsection~\ref{klapplirun}, we considered the case where $\spex\subseteq \cball{\sqrt{r}}$ with $r\leq \urho$. We recall that $\norm{x}^2<\urho$ is equivalent to $\norm{\speh(x)}^2<e^{-1}\norm{x}^2$, which is the same as $e\norm{\speh(x)}^2<\norm{x}^2$. Then, let $r<\urho$ and $\spex\subseteq \cball{\sqrt{r}}$. We have for all $x\in\spex$, $\spev(x)=\norm{x}^2$ and thus $\osps{\norm{\cdot}}{\spex}=\osps{\spev}{\spex}_r$. We conclude that $h_{\spev,r}$ defined in~\eqref{eq:hlyaprun} and $h_r$ defined in~\eqref{eq:hklrun} are equal. Moreover, in Subsection~\ref{klapplirun}, we chose $(h_r,e^{-1})\in\funset(\omega_\spex^i)$ whereas we choose here $(h_{\spev,r},N_\speh^r(\spev))\in\funset(\omega_\spex^i)$. We proved that in Example~\ref{lyapunovrun} in the case where $r<\urho$, that $N_\speh^r(\spev)<e^{-1}$. From the first statement of Proposition~\ref{simplefactFu}, we have for all $k\in\nn$ such that $\omega_{\spex,k}^i>0$,
$\mathfrak{F}_\omega(k,h_{\spev,r},N_\speh^r(\spev))\leq \mathfrak{F}_\omega(k,h_r,e^{-1})$.

For example, we go back the set $\spex_a=\{x^1,x^2\}$ where $x^1=(-1.3,-0.3)^\intercal$ and $x^2=(-1.1,-0.8)^\intercal$. We have $\spex_a\subseteq \cball{\sqrt{1.85}}$ with $\norm{x^1}^2=1.85<\urho$. Then $h_r=h_{\spev,r}:\rr_+\ni s:\mapsto s^2/1.85$. However, in Example~\ref{ratiooprun}, the value of $N_\speh^{1.85}(\spev)=1/32<e^{-1}$ and
\[
\begin{array}{lcl}
\mathfrak{F}_{\omega_a^1}(2,h_{\spev,r},N_\speh^{1.85}(\spev))&=&\dfrac{\ln(1.85)-2\ln\left(\max\{\pi_1(\speh^2(x^1),\pi_1(\speh^2(x^2))\}\right)}{\ln(32)}\approx 2.1183\\ 
&<& \mathfrak{F}_{\omega_a^1}(2,h_r,e^{-1})\approx 7.3415\enspace.
\end{array}
\]
This directly provides the stopping integer equal to 2 and saves computations for $\omega_{a}^1$.
We also have
\[
\begin{array}{lcl}
\mathfrak{F}_{\omega_a^2}(2,h_{\spev,r},N_\speh^{1.85}(\spev))&=&\dfrac{\ln(1.85)-2\ln\left(\max\{\pi_2(\speh^2(x^1),\pi_2(\speh^2(x^2))\}\right)}{\ln(32)}\approx 2.3222\\ 
&<& \mathfrak{F}_{\omega_a^2}(2,h_r,e^{-1})\approx 8.0482\enspace .
\end{array}
\]
Again, we save computations with respect to $\mathfrak{F}_{\omega_a^2}(2,h_r,e^{-1})$.

The case $\spex_b=\{y^1,y^2\}$ where $y^1=(-2.3,-0.013)^\intercal$ and $y^2=(0.7,-2.29)^\intercal$ is less significative as 5.7341 is close to $\urho\approx 5.7481$. We have $\spex_b\subseteq \cball{\sqrt{5.7341}}$ with $\norm{y^2}^2=5.7341<\urho$. Then $h_r=h_{\spev,r}:\rr_+\ni s:\mapsto s^2/5.7341$. However, in Example~\ref{ratiooprun}, the value of $N_\speh^{5.7341}(\spev)=\dfrac{1+(5.7341-1)^2}{64}\approx 0.36581<e^{-1}\approx 0.36788$ and:
\[
\mathfrak{F}_{\omega_b^1}(1,h_{\spev,r},N_\speh^{5.7341}(\spev))=\dfrac{2\ln(\pi_1(\speh(y^2))-\ln(5.7341)}{\ln(N_\speh^{5.7341}(\spev))}\approx 2.44459 < \mathfrak{F}_{\omega_b^1}(1,h_r,e^{-1})\approx 2.4584;
\]
We also have
\[
\mathfrak{F}_{\omega_b^2}(3,h_{\spev,r},N_\speh^{5.7341}(\spev))=\dfrac{2\ln(\pi_2(\speh^3(y^1)))-\ln(5.7341)}{\ln(N_\speh^{5.7341}(\spev))}\approx 8.04905 < \mathfrak{F}_{\omega_b^2}(3,h_r,e^{-1})\approx 8.0945.
\]
\subsubsection{Numerical applications}
In Subsection~\ref{klapplirun}, we are limited to $\spex\subseteq \cball{\sqrt{r}}$ where $r<\urho$. Considering such a radius $r$, we obtain an upper bound of the form $\varphi(\speh^k(z))\leq e^{-k/2}\norm{z}$. Now, we can consider a greater radius $r$. We cannot exceed $\orho$ since the norm explodes starting with vectors of norm greater than $\sqrt{\orho}$. As in Subsection~\ref{klapplirun}, we consider two initial condition sets: the first is a singleton for which the norm of the vector is greater than $\sqrt{\urho}$. This will explain the difference from the cases developed in Subsection~\ref{klapplirun}. In the second case, we start with two vectors with strictly negative coordinates, the norms of which are greater than $\sqrt{\urho}$. Let us define
\begin{enumerate}
\item $\spex_c=\{z^0\}$ where $z^0=(2.2,-2)^\intercal$;
\item $\spex_d=\{z^1,z^2\}$ where $z^1=(-2.3,-1.9)^\intercal$ and $z^2=(-2.5,-1.5)^\intercal$.
\end{enumerate}

Again to simplify the notations, we write for all $i\in\{1,2\}$, $\omega_{c}^i=(\omega_{c,k}^i)_{k\in\nn}$ and $\omega_{d}^i=(\omega_{d,k}^i)_{k\in\nn}$ instead of $\omega_{\spex_c}^i$ and $\omega_{\spex_d}^i$. Recall that, for all  $i\in\{c,d\}$, $j\in\{1,2\}$ and $k\in\nn$:
\[
\omega_{i,k}^j:=\sup_{x\in \spex_i} \pi_j(\speh^k(x)).
\]
We can refine the definition of $h_{\spev,r}$ and $N_\speh^r(\spev)$. We write $h_c$ for the function associated with $\spex_c$ and $h_d$ for the one associated with $\spex_d$. First, the smallest $r$ satisfying $\spex_c\subseteq \cball{\sqrt{r}}$ is equal to $r^c:=\norm{z^0}^2=8.84>\urho$. The smallest $r$ satisfying $\spex_d\subseteq \cball{\sqrt{r}}$ is equal to $r^d=\max\{\norm{z^1}^2,\norm{z^2}\}=8.9>\urho$. In both cases, the radius exceeds $\urho$, then $\osps{\spev}{\spex_c}=e\norm{\speh(z^0)}^2\approx 23.4535$ and $\osps{\spev}{\spex_d}=e\max\{\norm{\speh(z^1)}^2,\norm{\speh(z^2)}^2\}=e\norm{\speh(z^1)}^2\approx 23.9697$. To know the exact value of $N_\speh^{r^c}(\spev)$ and $N_\speh^{r^d}(\spev)$, we have to check whether $\spef(r^c)>\urho$ and  $f(r^d)>\urho$. Since $\spef(r^c)\approx 8.6281>\urho$ and $\spef(r^d)\approx 8.81795>\urho$, we conclude that $N_\speh^{r^c}(\spev)=\speg(\spef(r^c))=\speg(\spef(8.84))\approx 0.9248$ and $N_\speh^{r^d}(\spev)=\speg(\spef(r^d))=\speg(\spef(8.9))\approx 0.9706$.
Thus, we get:
\[
h_c^{-1}:s\mapsto \dfrac{s^2}{e\norm{\speh(z^0)}^2} \text{ and } h_d^{-1}:s\mapsto \dfrac{s^2}{e\norm{\speh(z^1)}^2}
\]
and we have, for all $j\in\{1,2\}$ and all $k$ such that $\omega_{c,k}^j>0$ :
\[
\mathfrak{F}_{\omega_{c}^j}(k,h_c,N_\speh^{8.84}(\spev))=\dfrac{2\ln(\pi_j(\speh^k(z^{0}))-2\ln(\norm{\speh(z^0)})-1}{\ln(\speg(\spef(8.84)))}
\]
and for all $j\in\{1,2\}$ and all $k$ such that $\omega_{d,k}^j>0$:
\[
\mathfrak{F}_{\omega_{d}^j}(k,h_d,N_\speh^{8.9}(\spev))=\dfrac{2\ln(\max\{\pi_j(\speh^k(z^{1})),\pi_j(\speh^k(z^{2}))\})-2\ln(\norm{\speh(z^1)})-1}{\ln(\speg(\spef(8.9)))}
\]
Then, we are able to apply Algorithm~\ref{algomaincst} to compute $\omega_{i,{\rm opt}}^j$ and the maximal integer solution $k_{i,{\rm max}}^j$ for $i\in\{c,d\}$ and $j\in\{1,2\}$.  

\paragraph{The case $\spex_c=\{z^0\}$ where $z^0=(2.2,-2)^\intercal$}\hphantom{As $\pi_1(z^0)>0$ we can directly compute a first stopping integer for $\omega_{c}^1$.}
As $\pi_1(z^0)>0$ we can compute directly a first stopping integer for $\omega_{c}^1$ and:
\[
\mathfrak{F}_{\omega_{c}^1}(0,h_{\spev,r},N_\speh^{8.84}(\spev))=\dfrac{2\ln(2.2)-2\ln(\norm{\speh(z^0)})-1}{\ln\left(\speg(\spef(8.84))\right)}\simeq 20.187
\]
This means that we must compare the first 20 terms of $\omega_{c}^1$. For $\omega_{c}^2$, we cannot compute a stopping integer because $\pi_2(z^0)\leq 0$. 

Then, we compute $\speh(z^0)$ (actually already computed since we need this value to compute $\mathfrak{F}_{\omega_{c}^1}$). Since
\[
\speh(z^0)=\begin{pmatrix}2.406\\-1.685\end{pmatrix}
\]
we can update the stopping integer for $\omega_{c}^1$ and we have:
\[
\mathfrak{F}_{\omega_{c}^1}(1,h_{c},N_\speh^{8.84}(\spev))=\dfrac{2\ln(2.406)-2\ln(\norm{\speh(z^0)}-1}{\ln\left(\speg(\spef(8.84))\right)}\simeq 17.897
\]
This means that we finally compare the first 17 terms of $\omega_{c}^1$. The second coordinate is still negative; thus, we cannot compute a stopping integer. Now, as 
\[
\speh^2(z^0)\approx\begin{pmatrix}  
2.50476\\
  -1.30591\end{pmatrix}
\]
we have 
\[
\mathfrak{F}_{\omega_{c}^1}(2,h_{c},N_\speh^{8.84}(\spev))=\dfrac{2\ln(\pi_1(\speh^2(z^0)))-2\ln(\norm{\speh(z^0)}-1}{\ln\left(\speg(\spef(8.84))\right)}\simeq 16.867
\]
Then comparing the 16 first terms of $\omega_c^1$ leads to the conclusion that $\omega_{c,\rm opt}^1=\pi_1(\speh^2(z^0))$ and $\agmx(\omega_c^1)=\{2\}$. 

For $\omega_{c}^2$, we have to wait until $k=5$ to obtain $\pi_2(\speh^k(z^0))>0$. Then, we have:
\[
\speh^5(z^0)\approx \begin{pmatrix}
 0.37921\\
   0.15155
\end{pmatrix}
\]
and
\[
\mathfrak{F}_{\omega_{c}^2}(5,h_{c},N_\speh^{8.84}(\spev))=\dfrac{2\ln(\pi_2(\speh^5(z^0)))-2\ln(\norm{\speh(z^0)}-1}{\ln\left(\speg(\spef(8.84))\right)}\simeq 88.6294
\]
We must compare the 88 first terms of $\omega_c^2$ to be guaranteed to pass the maximizer rank. However, the terms $\omega_{c,k}^2$ after $k=5$ are strictly smaller than $\pi_2(\speh^5(z^0))$ (even the positive ones) and we conclude that $\omega_{c,\rm opt}^2=\pi_2(\speh^5(z^0))$ and $\agmx(\omega_c^2)=\{5\}$.

\paragraph{The case $\spex_d=\{z^1,z^2\}$ where $z^1=(-2.3,-1.9)^\intercal$ and $z^2=(-2.5,-1.5)^\intercal$}\hphantom{ and we write $r^d=\max\{\norm{z^1}^2,\norm{z^2}\}=8.9>\urho$.}

As both vectors have negative coordinates, we must wait for the first integers $k_1$, $k_2$ such that $\omega_{d,k_1}^1>0$ and $\omega_{d,k_2}^2>0$. We have, $\min\{k\in\nn : \pi_1(T^k(z^1))>0\}=6$, $\min\{k\in\nn : \pi_2(T^k(z^1))>0\}=7$, $\min\{k\in\nn : \pi_1(T^k(z^2))>0\}=4$ and $\min\{k\in\nn : \pi_2(T^k(z^2))>0\}=5$. As $\pi_1(T^4(z^2))\approx 0.08955$ and $\pi_2(T^5(z^2))\approx 0.0309$, we can compute:
\[
\mathfrak{F}_{\omega_{d}^1}(4,h_{d},N_\speh^{8.9}(\spev))=\dfrac{2\ln(\pi_1(\speh^4(z^2)))-2\ln(\norm{\speh(z^1)}-1}{\ln\left(\speg(\spef(8.9))\right)}\simeq 268.47
\]
and
\[
\mathfrak{F}_{\omega_{d}^2}(5,h_{d},N_\speh^{8.9}(\spev))=\dfrac{2\ln(\pi_2(\speh^5(z^2)))-2\ln(\norm{\speh(z^1)}-1}{\ln\left(\speg(\spef(8.9))\right)}\simeq 339.85
\]
meaning that $\omega_{d,{\rm opt}}^1=\max\{\omega_{d,k}^1: k=0,\ldots,268\}$ and $\omega_{d,{\rm opt}}^2=\max\{\omega_{d,k}^2: k=0,\ldots,339\}$.

Then, we compute the next images of $z^1$ and $z^2$ by $\speh$. We find that $\pi_1(\speh^6(z^1))\approx 0.1512>\pi_1(\speh^4(z^2))$ and thus we update the stopping integer for $\omega_{d}^1$ and we find:
\[
\mathfrak{F}_{\omega_{d}^1}(6,h_{d},N_\speh^{8.9}(\spev))=\dfrac{2\ln(\pi_1(\speh^6(z^1)))-2\ln(\norm{\speh(z^1)}-1}{\ln\left(\speg(\spef(8.9))\right)}\simeq 233.34.
\]
The new stopping integer for $\omega_{d}^1$ is set to 233. The next values of $\omega_{d,k}^1$ are smaller than $\pi_1(\speh^6(z^1))$ and thus we conclude that $\omega_{d,\rm opt}^1=\pi_1(\speh^6(z^1))$ and $\agmx(\omega_d^1)=\{6\}$. 

The same situation holds for $\omega_{d}^2$. Indeed, $\pi_2(\speh^7(z^1))\approx 0.0435835>\pi_2(\speh^5(z^2))$ and we have
\[
\mathfrak{F}_{\omega_{d}^2}(7,h_{d},N_\speh^{8.9}(\spev))=\dfrac{2\ln(\pi_2(\speh^7(z^1)))-2\ln(\norm{\speh(z^1)}-1}{\ln\left(\speg(\spef(8.9))\right)}\simeq 316.78
\]
The new stopping integer for $\omega_{d}^2$ is set to 316. The next values of $\omega_{d,k}^2$ are smaller than $\pi_2(\speh^7(z^1))$ and thus we conclude that $\omega_{d,\rm opt}^2=\pi_2(\speh^7(z^1))$ and $\agmx(\omega_d^2)=\{7\}$.

\section{Concluding remarks}\label{sec:conclusion}
In this paper, we solved specific discrete-time peak computation problems. In general, discrete-time peak computation problems consist in searching, for a given objective function, for the reachable value of a given discrete-time system that maximizes the objective function. The optimal value of this maximization problem can be viewed as the supremum of a sequence of optimal values. The solution proposed in this paper is thus based on previous results, allowing us to compute the maximum of a real sequence. The computational method requires a particular sequence greater than the analyzed sequence. This particular sequence is the image of a positive convergent geometric sequence by a strictly increasing continuous function on $[0,1]$. In this paper, we use the structure of the problem (i.e., the dynamical system) to compute the strictly increasing continuous and the positive convergent geometric sequence. The construction of these elements requires the stability of the discrete-time system, as they are constructed from $\klcls$ certificates and Lyapunov functions. We illustrate the techniques on an example from the literature. 

Three main questions remain unsolved and are left for future work. First, how can we solve a discrete-time peak computation problem for which the underlying system diverges? Second, for nonlinear systems, how can discrete-time peak computation problems be managed with an infinite bounded initial conditions set?
Finally, how can the number of unuseful computations be reduced?  
\bibliographystyle{alpha}
\bibliography{lyapunovbibarxiv}

\newpage

\section*{Appendix}

\begin{lemma}
Let us consider the map $\speh$ in~\eqref{runndyn}. Let $x\in\rr^2$ be a nonzero vector such that $\norm{\speh^k(x)}$ tends to 0 as $k$ tends to $+\infty$. There exists $k_1,k_2\in\nn$ such that $\pi_1(\speh^{k_1}(x))>0$ and $\pi_2(\speh^{k_2}(x))>0$.
\end{lemma}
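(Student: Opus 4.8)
The plan is to pass to complex coordinates, where $\speh$ becomes multiplication by a scalar that depends on the current iterate, and then to follow only the \emph{argument} of the iterates. Identifying $\rr^2$ with the complex plane via $x=(x_1,x_2)^\intercal\mapsto z=x_1+\mathrm{i}\,x_2$, a one-line computation shows that $\speh$ corresponds to $z\mapsto \tfrac18(|z|^2-1+\mathrm{i})z$. Writing $z_k$ for the complex number attached to $\speh^k(x)$, we thus have $z_{k+1}=c_kz_k$ with $c_k:=\tfrac18(|z_k|^2-1+\mathrm{i})$. Since $\operatorname{Im} c_k=\tfrac18\neq 0$, the factor $c_k$ never vanishes, so, as $z_0=x\neq 0$ and $z_{k+1}=c_kz_k$, no $z_k$ vanishes either. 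Consequently $\theta_k:=\arg(z_k)$ is well defined and $\theta_{k+1}\equiv\theta_k+\arg(c_k)\pmod{2\pi}$.

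Next I would exploit the hypothesis $\norm{\speh^k(x)}\to 0$: it provides an index $N$ such that $|z_k|^2<1$ for every $k\ge N$. For such $k$ one has $\operatorname{Re} c_k=\tfrac18(|z_k|^2-1)\in(-\tfrac18,0)$ while $\operatorname{Im} c_k=\tfrac18>0$, so $c_k$ lies strictly inside the open second quadrant and $\arg(c_k)\in(\tfrac\pi2,\pi)$. Hence, from index $N$ onward, each step increases the argument by a quantity strictly between $\pi/2$ and $\pi$. In these terms the goal is purely about $\theta$: $\pi_1(\speh^{k_1}(x))>0$ means $\theta_{k_1}$ lies in the open half-circle $\{\operatorname{Re}>0\}$, i.e.\ $\theta_{k_1}\in(-\tfrac\pi2,\tfrac\pi2)\pmod{2\pi}$, and $\pi_2(\speh^{k_2}(x))>0$ means $\theta_{k_2}\in(0,\pi)\pmod{2\pi}$.

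Both statements then follow from a single elementary fact about rotations by angles in $(\tfrac\pi2,\pi)$: three consecutive arguments $\theta_k,\theta_{k+1},\theta_{k+2}$ (with $k\ge N$) can never all lie in one closed half-circle. Indeed, if $\theta_k$ and $\theta_{k+1}$ both lie in a closed arc $[a,a+\pi]$, then, the increment $\theta_{k+1}-\theta_k$ being $>\pi/2$, one is forced to have $\theta_{k+1}\in(a+\tfrac\pi2,a+\pi]$; and then $\theta_{k+2}=\theta_{k+1}+\delta_{k+1}$ with $\delta_{k+1}\in(\tfrac\pi2,\pi)$ lands in the arc $(a+\pi,a+2\pi)$, which is disjoint from $[a,a+\pi]$ modulo $2\pi$. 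Applying this with the closed half-circle $\{\operatorname{Re}\le 0\}$ (i.e.\ $a=\tfrac\pi2$): if $\operatorname{Re}(z_k)\le 0$ held for \emph{all} $k$, then $\theta_N,\theta_{N+1},\theta_{N+2}$ would all lie in $\{\operatorname{Re}\le 0\}$, a contradiction; hence some $k_1$ satisfies $\pi_1(\speh^{k_1}(x))=\operatorname{Re}(z_{k_1})>0$. The same argument with the closed half-circle $\{\operatorname{Im}\le 0\}$ (i.e.\ $a=\pi$) produces $k_2$ with $\pi_2(\speh^{k_2}(x))>0$.

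I expect the only delicate part to be the arc bookkeeping modulo $2\pi$ in the rotation fact and its two applications; the complex rewriting of $\speh$, the non-vanishing of the iterates, and the localisation of $c_k$ in the open second quadrant are all routine.
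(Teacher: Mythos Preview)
Your proof is correct and takes a genuinely different route from the paper's. The paper's argument is a bare-hands case analysis on signs: it first notes that the hypothesis $\norm{\speh^k(x)}\to 0$ forces the orbit into the open unit ball, and then, for a nonzero $z$ with $\norm{z}<1$, it tracks the sign pattern of $(\pi_1(z),\pi_2(z))$ through one or two applications of $\speh$, using only that $\norm{z}^2-1<0$, to produce a strictly positive coordinate within at most two further steps. Your approach replaces that case split by a single structural observation: after the complex identification $\speh(z)=\tfrac18(|z|^2-1+\mathrm{i})z$, once $|z_k|<1$ each factor $c_k$ lies in the open second quadrant, so the argument of $z_k$ advances by an amount in $(\tfrac\pi2,\pi)$ at every step, and three such advances cannot stay inside any closed half-circle. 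This yields the same quantitative conclusion (a positive first, resp.\ second, coordinate appears among any three consecutive iterates once inside the unit ball) with essentially no casework, and it makes the underlying reason transparent: the rotational part of $\speh$ turns by more than a quarter-circle per step in the unit ball. The paper's proof is shorter to write but opaque; yours is slightly longer to set up but conceptually cleaner and would generalise immediately to any dynamics of the form $z\mapsto c(z)z$ with $\arg c(z)$ eventually bounded in an interval of length less than $\pi$ inside $(0,\pi)$.
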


\begin{proof}
Actually, it suffices to prove that for all $x$ in the open unit ball, there exists $k_1,k_2\in\nn$, such that $\pi_1(\speh^{k_1}(x))>0$ and $\pi_2(\speh^{k_2}(x))>0$. Indeed, if $\norm{\speh^k(x)}$ converges to 0, then $\speh^k(x)$ eventually enters the open unit ball. The worst situation is when $\speh^k(x)$ has only negative coordinates before entering the open unit ball.

Therefore, let us consider $z\in\rr^2$ not being 0 and such that $\norm{z}<1$. First, we consider $\pi_1$. If $z_1>0$ the result obviously holds. Therefore, we suppose that $z_1\leq 0$. If $z_2\leq 0$ (with $z_2\neq 0$ if $z_1=0$), as $\norm{z}<1$, we have $\pi_1(\speh(z))>0$. Now, suppose that $z_2>0$. Since $z_1<0$, we have $\pi_2(\speh(z))<0$. Recall that for all $y\in \cball{\sqrt{r}}$ where $r<\orho$, $\norm{\speh(y)}^2<\norm{y}^2$. 
If $\pi_1(\speh(z))>0$, the result follows. Thus, if $\pi_1(\speh(z))\leq 0$, we get $\pi_1(\speh^2(z))>0$ since $\norm{\speh(z)}<1$ and $\pi_1(\speh(z))<0$ and $\pi_2(\speh(z))<0$. 
 
Now, we prove the result for $\pi_2$. The result trivially holds if $z_2>0$. So, we suppose that $z_2\leq 0$. If $z_1\geq 0$ ($z_1\neq 0$ if $z_2=0$), then, as $\norm{z}<1$, we get $\pi_2(\speh(z))>0$. Now, assume that $z_1<0$. Then, it leads to $\pi_1(\speh(z))>0$. If $\pi_2(\speh(z))>0$, the results holds. Assuming that $\pi_2(\speh(z))<0$, then from the fact that $\norm{\speh(z)}<\norm{z}<1$, $\pi_1(\speh(z))>0$ and $\pi_2(\speh(z))<0$, we get $\pi_2(\speh^2(z))>0$.

\end{proof}



\end{document}